\numberwithin{equation}{section}
\def\e{\varepsilon}
\def\C{\mathbb{C}}
\def\R{\mathbb{R}}
\def\Z{\mathbb{Z}}
\def\N{\mathbb{N}}
\def\S{\mathbb{S}}
\def\D{\mathscr{D}}
\def\H{\mathfrak{H}}
\def\MT{\mathcal{MT}}
\def\V{\mathcal{V}}
\def\A{\mathcal{A}}
\def\B{\mathcal{B}}
\def\tensor{\otimes}
\def\til{~}
\definecolor{myyellow}{rgb}{0.9290 0.6940 0.1250}
\definecolor{myorange}{rgb}{0.8500 0.3250 0.0980}
\definecolor{myred}{rgb}{0.6350 0.0780 0.1840}
\definecolor{mygreen}{rgb}{0.4660 0.6740 0.1880}
\definecolor{mycyan}{rgb}{0.3010 0.7450 0.9330}
\definecolor{myblue}{rgb}{0 0.4470 0.7410}
\definecolor{mypurple}{rgb}{0.4940 0.1840 0.5560}
\newcommand{\n}[1]{\left\lVert#1\right\rVert}
\DeclareMathOperator*{\sgn}{sgn}
\DeclareMathOperator*{\dom}{dom}
\DeclareMathOperator*{\dist}{dist}
\newtheorem{theorem}{Theorem}
\newtheorem{lemma}{Lemma}
\newtheorem{corollary}{Corollary}
\theoremstyle{definition}
\newtheorem{remark}{Remark}[section]
\newtheorem{hypothesis}{Assumption}
\newtheorem*{rigidityassumptions*}{Rigidity Assumptions}
\title{Keller-type bounds for Dirac operators \\ perturbed by rigid potentials}
\author{
	Haruya Mizutani\thanks{Department of Mathematics, Graduate School of Science, Osaka University, Toyonaka, Osaka 560-0043, Japan. E-mail address: {haruya@math.sci.osaka-u.ac.jp}} 
	\;and 
	Nico M. Schiavone\thanks{Department of Mathematics ``Guido Castelnuovo'', Sapienza Universit\`{a} di Roma,
		Piazzale Aldo Moro\til5, 00185 Rome, Italy.
		E-mail address: {schiavone@mat.uniroma1.it}}
}
\date{\vspace{-6ex}}
\begin{document}
\maketitle

\begin{center}
	\footnotesize
	
	\begin{tabularx}{0.7\textwidth}{lX}
		\begin{tabular}{@{}l@{}}
			{\bf Keywords:}
			\\
			\quad
		\end{tabular}
		& 
		\begin{tabular}{@{}l@{}}
			non-selfadjoint,
			localization of eigenvalues,
			Keller-type bound
			\\
			Dirac operator, Birman-Schwinger principle 
		\end{tabular}
		\\[0.35cm]
		{\bf MSC2020:}
		&
		primary 35P15, 35J99, 47A10, 47F05, 81Q12
	\end{tabularx}

\end{center}


\quad


\begin{abstract}
	In this paper we are interested in generalizing Keller-type eigenvalue estimates for the non-selfadjoint Schr\"odinger operator to the Dirac operator, imposing some suitable rigidity conditions on the matricial structure of the potential, without necessarily requiring the smallness of its norm.
\end{abstract}


\tableofcontents


\section{Introduction}

The last three decades have seen a flood of interest in the study of non-selfadjoint operators in Quantum Mechanics, both from a physical point of view, due to the possibility of a new quantomechanic formulation involving quasi-selfadjoint or $\mathcal{PT}$-symmetric observables, and from a mathematical one, since the sacrifice of the selfadjoint property turns out in a lack of mathematical tools, which makes this topic challenging.

In particular, huge attention is paid to the spectral properties of non-selfadjoint operators and to the so-called Keller-type inequalities, id est bounds on the eigenvalues in terms of norms of the potential. Especially in the case of the Schr\"odinger operator, they can be referred to as well as Lieb-Thirring-type inequalities, since they constitute somewhat the counterpart of the celebrated inequalities for the {selfadjoint} Schr\"odinger operator, exploited by Lieb and Thirring in the \lq 70 of the last century to prove the stability of matter (we just cite the monograph \cite{LiebSeiringer-book} for an academic treatment of the subject).

The first appearance of a Keller-type inequality for the non-selfadjoint Schr\"odinger operator $-\Delta+\V$, where the potential $\V$ is a complex-valued function, is due to Abramov, Aslanyan and Davies \cite{AbramovAslanyanDavies01} which observed that the bound
\begin{equation}\label{eq:AAD}
	|z|^{1/2} \le \frac{1}{2} \n{\V}_{L^1}
\end{equation}
holds, in dimension $n=1$, for any eigenvalue $z \in \sigma_p(-\Delta+\V)$, and the constant is sharp.

In view of this result, Laptev and Safronov \cite{LaptevSafronov09} conjectured that the eigenvalues localization bound $|z|^{\gamma} \le D_{\gamma,n} \n{\V}_{L^{\gamma+n/2}}^{\gamma+n/2}$ should hold for any $0<\gamma\le n/2$ and some constant $D_{\gamma,n}>0$. 
Frank \cite{Frank11} proved the conjecture to be true for $0<\gamma\le 1/2$, and later Frank and Simon \cite{FrankSimon17} extended the range up to the one suggested by Laptev and Safronov under radial symmetry assumptions. Explicitly, in dimension $n\ge2$ the eigenvalues of $-\Delta+\V$ satisfy the estimates 
\begin{equation}\label{eq:schrodinger-enclosures}
	|z|^{\gamma} \le D_{\gamma,n} 
	\left\{
	\begin{aligned}
	&\n{\V}_{L^{\gamma+n/2}}^{\gamma+n/2}
	&&\text{for $0<\gamma\le\frac{1}{2}$,}
	\\
	&\n{\V}_{L_\rho^{\gamma+n/2} L_\theta^\infty}^{\gamma+n/2}
	&&\text{for $\frac{1}{2}<\gamma< \frac{n}{2}$,}
	\\
	&\n{\V}_{L_\rho^{n,1} L_\theta^\infty}^{n}
	&&\text{for $\gamma=\frac{n}{2}$,}
	\end{aligned}
	\right.
\end{equation}
where the positive constant $D_{\gamma,n}$ is independent of $z$ and $\V$ and where the radial-angular spaces $L_{\rho}^p L_{\theta}^s$ and $L_{\rho}^{p,q}L_{\theta}^s$ are defined as
\begin{equation}\label{eq:radial-angular-spaces}
	\begin{aligned}
		L_{\rho}^p L_{\theta}^s
		:=&\,
		L^{p}(\R_+, r^{n-1}dr ; L^s(\S^{n-1}))
		\\
		L_{\rho}^{p,q}L_{\theta}^s
		:=&\,
		L^{p,q}(\R_+, r^{n-1}dr ; L^s(\S^{n-1}))
	\end{aligned}
\end{equation}
being $L^{p,q}$ the Lorentz spaces and $\S^{n-1}$
the $n$-dimensional unit spherical surface.
In the case $1 \le p,q<\infty$, the respective norms are explicitly given by
\begin{equation}\label{eq:radial-angular}
	\begin{aligned}
		\n{f}_{L_{\rho}^{p}L_{\theta}^s}
		&:=
		\left( \int_0^\infty 
		\n{f(r\,\cdot)}_{L^s(\S^{n-1})}^p
		r^{n-1} dr\right)^{1/p}
		\\
		\n{f}_{L_{\rho}^{p,q}L_{\theta}^s}
		&:=
		\left( p \int_0^\infty t^{q-1} \mu\left\{r>0 \colon
		\n{f(r\,\cdot)}_{L^s(\S^{n-1})}
		\ge t \right\}^{q/p} {dt} \right)^{1/q}
	\end{aligned}
\end{equation}
where $\mu$ is the measure $r^{n-1}dr$ on $\R_+=(0,\infty)$.
The above relations \eqref{eq:schrodinger-enclosures} hold also in the case $\gamma=0$, in the sense that if $D_{0,n} \n{\V}_{L^{n/2}}^{n/2}<1$ for some $D_{0,n}>0$, then the point spectrum of $-\Delta+\V$ is empty (the optimal constant is given by $D_{0,3}=4/(3^{3/2}\pi^2)$ in $3$-dimensions).

The Lieb-Thirring-type bound in \cite{Frank11} are obtained by Frank exploiting two tools: the Birman-Schwinger principle and the Kenig-Ruiz-Sogge estimates in \cite{KenigRuizSogge87} on the conjugate line (see Lemma\til\ref{lem:BS} and Lemma\til\ref{resest-T1} below, respectively). The combination of the Birman-Schwinger principle with resolvent estimates for free operators is one of the way to approach the localization problem for eigenvalues: it has been widely employed in the later times (see e.g. \cite{Frank11,CueninLaptevTretter14,Enblom16,FrankSimon17,Cuenin17,FanelliKrejcirikVega18-JST,FanelliKrejcirik19,CassanoIbrogimovKrejcirikStampach20,CassanoPizzichilloVega20,DAnconaFanelliSchiavone21,DAnconaFanelliKrejcirikSchiavone21} among others) and it will be the approach we are going to follow in this work too, as we will see. However, despite the
robustness of the Birman-Schwinger principle, it is not the only tool one could use to obtain spectral enclosures for non-selfadjoint operators: another powerful technique is the method of multipliers, see e.g. \cite{FanelliKrejcirikVega18-JST,FanelliKrejcirikVega18-JFA,Cossetti17,CossettiFanelliKrejcirik20,CossettiKrejcirik20}.

The Laptev-Safronov conjecture certainly can not be true for $\gamma>n/2$, as observed originally by Laptev and Safronov themselves (see also B\"ogli \cite{Bogli17} for the construction of bounded potentials in $L^{\gamma+n/2}$, $\gamma>n/2$, with infinitely many eigenvalues which accumulate to the real non-negative semi-axis). The situation in the range $1/2<\gamma\le n/2$ is still unclear, even if an argument in \cite{FrankSimon17} suggests that, for these values of $\gamma$, the Laptev-Safronov conjecture should fail in general.
Nevertheless, for $n\ge1$ and $\gamma>1/2$, Frank in \cite{Frank18} proved a localization result still involving the $L^{\gamma+n/2}$ norm of the potential, but in an unbounded region of the complex plane around the semi-line $\sigma(-\Delta)=[0,\infty)$, viz.
\begin{equation}\label{eq:S-dist}
	|z|^{1/2} \dist(z,[0,\infty))^{\gamma-1/2} \le D_{\gamma,n} \n{\V}_{L^{\gamma+n/2}}^{\gamma+n/2}.
\end{equation}
In the limiting case $\gamma=\infty$ one has the trivial bound
\begin{equation}\label{eq:S-dist-oo}
	\dist(z,[0,\infty)) \le D_{\infty,n} \n{\V}_{L^\infty}.
\end{equation}
Thus, it seems that to go beyond the threshold $\gamma=1/2$, one should ask radial symmetry on the potential, or abandon the idea of localizing the eigenvalues in compact regions (cf. Section\til\ref{sec:estimates} below).

To conclude the recap on the spectral results for the Schr\"odinger operator, besides the ones related to the above conjecture, one should refer also to \cite{FrankLaptevSeiringer06}, where bounds on sums of eigenvalues outside a cone
around the positive axis were proved, and to the works \cite{DaviesNath02, Safronov10, Enblom16, FrankSabin17,FanelliKrejcirikVega18-JST, Frank18, LeeSeo19, Cuenin20}, where one can find Keller-type inequalities involving not only the $L^p$ norms.

We now turn our attention to the non-selfadjoint  Dirac operator, formally defined as $\D_{m,\V} = \D_{m}+\V$. The free Dirac operator $\D_{m}$ is given by
\begin{equation*}
	\D_{m} := -i \sum_{k=1}^{n} \alpha_k \partial_k + m \alpha_{n+1}
\end{equation*}
where $n\ge1$ is the dimension, $m\ge0$ is the mass and, set $N:= 2^{\left\lceil n/2 \right\rceil}$ being $\left\lceil\cdot\right\rceil$ the ceiling function, $\alpha_k \in \C^{N\times N}$ are the Dirac matrices, i.e. Hermitian matrices elements of the Clifford algebra, satisfying the anticommutation relations
\begin{equation}\label{clifford}
\alpha_k\alpha_j + \alpha_j\alpha_k = 2\delta_{k}^j I_N
\quad
\text{for $j,k\in\{1,\dots,n\}$}
\end{equation}
where $\delta_k^j$ is the Kronecker symbol and $I_N$ the $N\times N$ unit matrix (we will return on the Dirac matrices later in Section\til\ref{sec:potential}). The potential $\V \colon \R^n \to \C^{N\times N}$ is a possibly non-Hermitian matrix-valued function.

The spectral studies for $\D_{m,\V}$ started with Cuenin, Laptev and Tretter \cite{CueninLaptevTretter14} in the $1$-dimensional case, followed by 
\cite{Cuenin14,CueninSiegl18,Enblom18}. In \cite{CueninLaptevTretter14}, the authors proved that if $\V$ is a $2\times2$ complex matrix with all its entries in $L^1(\R)$, such that
\begin{equation*}
\n{\V}_{L^1(\R)} = \int_{\R} |\V(x)|dx < 1,
\end{equation*}
where $|\V(\cdot)|$ is the operator norm of $\V(\cdot)$ in $\C^2$ with the Euclidean norm,
then every non-embedded eigenvalue $z\in \rho(\D_{m})$ of $\D_{m,\V}$ lies in the union
\begin{equation*}
z \in \overline{B}_{R_0}(x^-_0) \cup \overline{B}_{R_0}(x^+_0)
\end{equation*}
of two disjoint closed disks, with centers and radius respectively
\begin{equation*}
x^\pm_0 = \pm m \sqrt{\frac{\n{\V}_1^4-2\n{\V}_1^2+2}{4(1-\n{\V}_1^2)} + \frac{1}{2}},
\qquad
R_0 = m
\sqrt{\frac{\n{\V}_1^4-2\n{\V}_1^2+2}{4(1-\n{\V}_1^2)} - \frac{1}{2}}.
\end{equation*}
Again, the proof relies on the combination of the Birman-Schwinger principle with a resolvent estimate for the free Dirac operator, namely
\begin{equation*}
\n{(\D_{m}-z)^{-1}}_{L^1(\R) \to L^\infty(\R)}
\le
\sqrt{\frac{1}{2}
	+\frac{1}{4} \left\lvert\frac{z+m}{z-m}\right\rvert
	+\frac{1}{4} \left\lvert\frac{z-m}{z+m}\right\rvert
},
\quad
\text{$z \in \rho(\D_{m})$}.
\end{equation*}
In some sense, this is the counterpart for the Dirac operator of the Abramov-Aslanyan-Davies inequality \eqref{eq:AAD} for the Schr\"odinger operator in $1$-dimension. 

One could ask if, in the same fashion of the Frank's argument in \cite{Frank11}, one can combine the Birman-Schwinger principle with $L^p-L^{p'}$ resolvent estimates for the free Dirac operator, to derive Keller-type inequalities for the perturbed Dirac operator. Unfortunately, these reasoning can not be straightforwardly applied, since such Kenig-Ruiz-Sogge-type estimates does not exists in the case of Dirac for dimensions $n\ge2$, as observed by Cuenin in \cite{Cuenin14}. Indeed, due to the Stein-Thomas restriction theorem and standard estimates for Bessel potentials, the resolvent $(\D_{m}-z)^{-1} \colon L^p(\R^n) \to L^{p'}(\R^n)$ is bounded uniformly for $|z|>1$ if and only if
\begin{equation*}
\frac{2}{n+1}
\le
\frac{1}{p} + \frac{1}{p'}
\le
\frac{1}{n},
\end{equation*}
hence the only possible choice is $(n,p,p')=(1,1,\infty)$. For the Schr\"odinger operator the situation is much better since the right-hand side of the above range is replaced by $2/n$, as per the Kenig-Ruiz-Sogge estimates.

For the high dimensional case $n\ge2$, we refer among others to the works 
\cite{Dubuisson14,CueninTretter16,Cuenin17,FanelliKrejcirik19} where the eigenvalues are localized in terms of $L^p$ norm of the potential, but the confinement region is unbounded around the spectrum $\sigma(\D_{m})=(-\infty,-m] \cup [m,+\infty)$ of the free Dirac operator $\D_{m}$. Instead, we are mainly devoted to the research of a compact region in which to localize the point spectrum. 

Some progress in this direction are achieved by the second author together with D'Ancona and Fanelli in \cite{DAnconaFanelliSchiavone21} and D'Ancona, Fanelli and Krej\v{c}i\v{r}\'{\i}k in \cite{DAnconaFanelliKrejcirikSchiavone21}. In the first work, we proved a result that generalizes in higher dimensions the previous one by Cuenin, Laptev and Tretter \cite{CueninLaptevTretter14}. Indeed, assuming $\V$ small enough respect to a suitable mixed Lebesgue norm, namely
\begin{equation}\label{eq:DFS}
	\n{\V}_Y :=
	\max_{j\in\{1,\dots,n\}} \n{ \V }_{L^1_{x_j} L^\infty_{{\widehat{x}_j}}}
	=
	\max_{j\in\{1,\dots,n\}} 
	\int_{\R} \n{\V(x_j,\cdot)}_{L^\infty(\R^{n-1})} dx_j
	\le C_0
\end{equation}
for a positive constant $C_0$ independent of $\V$, we prove in the massive case $m>0$ that the eigenvalues of $\D_{m,\V}$ are contained in the union 
	\begin{equation*}
	z \in \overline{B}_{R_1} (x^-_1) \cup \overline{B}_{R_1} (x^+_1)
	\end{equation*}
	of the two closed disks in $\C$ with centers and radius given by
	\begin{equation}\label{eq:disks-DFS}
	x^\pm_1 := \pm m\,\frac{\nu^2+1}{\nu^2-1},
	\quad
	R_1 := m\,\frac{2\nu}{\nu^2-1},
	\quad\text{where}\quad 
	\nu := \left[\frac{(n+1)C_0}{\n{\V}_Y} - n \right]^2 > 1.
	\end{equation}
Instead, in the massless case $m=0$, the spectrum is stable respect to the perturbation $\V$, viz.	$\sigma(\D_{0,\V})=\sigma_{e}(\D_{0,\V})=\R$ and there are no eigenvalues, under the same smallness assumption for the potential. This results are proved combining the Birman-Schwinger principle together with a new Agmond-H\"ormander-type estimates for the resolvent of the Schr\"odinger operator and its first derivatives.

The same machinery is employed in \cite{DAnconaFanelliKrejcirikSchiavone21}, where again we take advantage of the main engine of the Birman-Schwinger operator fueled this time with resolvent estimates already published in the literature, but which imply spectral results for the Dirac operator (and for the Klein-Gordon one) worthy of consideration. In particular, in dimension $n\ge3$ we show again results similar to the previous ones, hence confinement of the eigenvalues in two disks in the massive case and their absence in the massless case, assuming now for the potential the smallness assumption 
\begin{equation*}
	\n{|x|\V}_{\ell^1L^\infty} := \sum_{j\in\Z} \n{|x|\V}_{L^\infty(2^{j-1}\le|x|<2^j)} < C_1 
\end{equation*}
and substituting the definition of $\nu$ in \eqref{eq:disks-DFS} with $\nu:=[2C_1/\n{|x|\V}_{\ell^1L^\infty}-1]^2$. The constant $C_1$ can be explicitly showed as a number depending only on the dimension $n$ and, even if far to be optimal, is still valuable in the application. Moreover, in \cite{DAnconaFanelliKrejcirikSchiavone21} results for the stability of the spectrum are proved not only in the massless case, but also in the massive one, assuming smallness pointwise assumptions on the weighted potential, namely $\n{|x|\rho^{-2} \V}_{L^\infty}<C_2$. The constant $C_2$ is made explicit in terms of the dimension $n$ and the mass $m$, and $\rho$ is a positive weight satisfying $\sum_{j\in\Z} \n{\rho}^2_{L^\infty(2^{j-1}\le|x|<2^j)}<\infty$ and additionally, in the massive case, such that $|x|^{1/2}\rho \in L^\infty(\R^n)$ (prototypes of such kind of weights already appeared e.g. in \cite{BarceloRuizVega97}).

Looking at the above mentioned results of spectral enclosure for non-selfadjoint Dirac operators, two situations seems to arise: or the confinement regions are unbounded, containing the continuous spectrum of the free Dirac operator $\D_{m}$, or the regions are bounded, but the potential is required to be small respect to some \lq\lq cumbersome'' norm. 

We finally mention the works \cite{ErdoganGoldbergGreen19} by Erdo\u{g}an, Goldberg and Green, and \cite{ErdoganGreen21} by Erdo\u{g}an and Green, 
where the authors, studying the limiting absorption principle and dispersive bounds, prove that for a bounded, continuous potential $V$ satisfying a mild decaying condition, there are no eigenvalues of the perturbed Dirac operator in a sector of the complex plane containing a portion of the real line sufficiently far from zero energy. These results are qualitative, in the sense that their bounds does not explicitly depend on some norm of the potential, as in the inequalities object of our study.

In the present paper we recover Keller-type bounds which are a worthy analogous of the Schr\"odinger enclosures in \eqref{eq:schrodinger-enclosures}, hence exploiting $L^p$ norms at least for $n/2 \le p \le (n+1)/2$; also, we can remove the smallness assumption on the potential (when $p\neq n/2$). Of course, to reach such a nice result, the price to pay is high: we will require to our potentials to be of the form $\V=vV$, where $v:\R^n\to\C$ is a scalar function in the desired space of integrability, whereas $V$ is a constant matrix satisfying some suitable rigidity conditions.
%
Hence, in a way to be clarified later, we will fully take advantage of the matricial structure of the Dirac operator in order to reduce ourselves basically to the Schr\"odinger case.

The paper is structured as follows: in the next Section\til\ref{sec:main} we will state our main results, proved in Section\til\ref{sec:BS} employing the Birman-Schwinger principle there recalled and the resolvent estimates for the Schr\"odinger operator collected in Section\til\ref{sec:estimates}. Finally, in Section\til\ref{sec:potential}, we explicitly compute examples of potentials satisfying our rigidity assumptions.


\section{Idea and main results}\label{sec:main}

As anticipated in the Introduction, the trick of our argument relies completely on the matricial structure of the potential $\V$. Before to rattle off the hypothesis we are going to impose on it, let us recall the basic idea behind the Birman-Schwinger principle. In order to make things work and being formal, just for the moment assume that $\V$ is bounded, such that $\D_{m,\V}=\D_{m}+\V$ is well defined as sum of operators. We will return in great generality on this matter later, in Section\til\ref{sec:BS}, where we will also be able to properly define $\D_{m,\V}$ thanks to Lemma\til\ref{lem:H_V}.

The principle assure us that $z$ is an eigenvalue of $\D_{m,\V}$, where $\V=\B^*\A$ is a factorizable potential, if and only if $-1$ is an eigenvalue of the Birman-Schwinger operator $K_z:=\A(\D_{m}-z)^{-1}\B^*$ (the computation can be straightforwardly checked). If $-1 \in \sigma_p(K_z)$ then $\n{K_z}\ge1$, which turns out to be the desired localization bound, if one is able to estimate the Birman-Schwinger operator.

From the well-known identity
\begin{equation}\label{eq:dirac-resolvent-identity}
	(\D_{m}-z)^{-1} = (\D_{m}+z) R_0(z^2-m^2) I_{N}
\end{equation}
which links the Dirac resolvent $(\D_{m}-z)^{-1}$ with the Schr\"odinger resolvent $R_0(z):=(-\Delta-z)^{-1}$, we have that 
\begin{equation}\label{Kz-1}
	\A (\D_{m}-z)^{-1} \B^* = -i \sum_{k=1}^{n} \A \alpha_k \partial_k R_0(z^2-m^2) \B^* + \A (m \alpha_{n+1} + z) R_0(z^2-m^2) \B^*.
\end{equation}
At this point, the receipt one usually cooks (as in the literature works cited in the Introduction) is the following. First of all, the polar decomposition $\V = \mathcal{U}\mathcal{W}$ of the potential is exhibited, where $\mathcal{W}=\sqrt{\V^* \V}$ and the unitary matrix $\mathcal{U}$ is a partial isometry. Then one takes $\A=\sqrt{\mathcal{W}}$ and $\B=\sqrt{\mathcal{W}}\mathcal{U}^*$; this choice assures a certain symmetry in splitting the potential, since $\A$ and $\B$ are in the same space of integrability. Therefore, making use of resolvent estimates and of the H\"older's inequality, one reaches an estimate of the form $1\le \n{K_z} \le \kappa(z) \n{\V}_X$ for some suitable function $\kappa:\C\to\R$ and space $X$.

Clearly, the main problem is reduced to the research of nice resolvent estimates. For the Schr\"odinger operator, these have been extensively studied, so if we look at \eqref{Kz-1} the main concern comes from the estimates for the derivatives of $R_0(z)$. Our idea here is to choose $\A$ and $\B$ in such a way that the terms $\A \alpha_k \partial_k R_0(z^2-m^2) \B^*$, for any $k\in\{1,\dots,n\}$, simply disappear 
(we will make an exception to this for Theorem\til\ref{thm:RAi-general}). 
If additionally we impose also $\A R_0(z^2-m^2) \B^*$ to be zero, we are also able to remove the smallness assumption on the potential, because it turns out that they originates from this term.
Therefore, let us state the following hypothesis.

\begin{rigidityassumptions*}
Let us consider potential of the type $\V=vV=\B^*\A$, $\A=aA$, $\B=bB$, in such a way that $v=\overline{b}a$ and $V=B^*A$, where $a,b,v \colon \R^n \to \C$ are complex-valued functions and $A,B,V \in \C^{N\times N}$ are constant matrices. 

On the scalar part $v$, we impose the usual polar decomposition, viz. $a=|v|^{1/2}$ and $b=\overline{\sgn(v)}|v|^{1/2}$, where the sign function is defined as $\sgn(w)=w/|w|$ for $0\neq w \in \C$ and $\sgn(0)=0$.

On the matricial part $V$, we ask the following set of conditions:
	\begin{equation*}\label{eq:AB}
		\begin{gathered}
			A \alpha_k B^*=0
			\quad\text{for $k \in\{1,\dots,n\}$,}
			\\
			V=B^*A \neq 0.
		\end{gathered}
	\end{equation*}
It is not restrictive to assume also that 
\begin{equation*}\label{eq:AB-normalization}
|A|=|B|=1
\end{equation*}
where $|\cdot|\colon\C^{N\times N}\to \R$ is the norm induced by the Euclidean norm, viz. $|A|=\sqrt{\rho(A^*A)}$, where $\rho(M)$ is the spectral radius of $M$.

In addition to the above stated hypothesis, suppose also one between the next conditions:
\begin{enumerate}[label=(\roman*)]
	\item $A\alpha_{n+1} B^* \neq 0$ and $AB^* \neq 0$;
	\item $A\alpha_{n+1} B^* \neq 0$ and $AB^* = 0$;
	\item $A\alpha_{n+1} B^* = 0$ and $AB^* \neq 0$;
	\item $A\alpha_{n+1} B^* = 0$ and $AB^* = 0$.
\end{enumerate}
\end{rigidityassumptions*}

In the following, we will refer to our set of rigidity assumptions as RA($\iota$), where $\iota\in\{i,ii,iii,iv\}$ depends on which of the four conditions above is considered.

\begin{remark}
	Note that we will \textit{not} assume any Rigidity Assumptions in Theorem\til\ref{thm:RAi-general}, but only in Theorems\til\ref{thm:RAii-n1}--\ref{thm:RAi-dist} below.
\end{remark}

\begin{remark}\label{rem:dirac}
	At this point the reader may argue that the assumptions above are not rigorous, since we have not explicitly defined the Dirac matrices $\alpha_k$, $k\in\{1,\dots,n+1\}$. Moreover, there is not a unique representation for these matrices! The concern is legit, and we will furnish later the exact definitions of our Dirac matrices, in Section\til\ref{sec:potential}, which will be all devoted to computations with matrices. The choice of a particular representation of the Dirac matrices is not restrictive, see Remark\til\ref{rem:not-restrictive}.
\end{remark}

\begin{remark}
	As will be proved in Section\til\ref{sec:potential}, we can find matrices $A$ and $B$ satisfying RA(i) in any dimension $n\ge1$, whereas there are no matrices satisfying RA(ii) and RA(iii) in dimensions $n=2,4$ and no matrices satisfying RA(iv) in dimensions $n=1,2$. This explains the dimensions restriction in the statements of the theorems below.
\end{remark}

We can state now our main results. 
Recall, other than the Lebesgue norm, the Lorentz norm and the radial-angular norm introduced in \eqref{eq:radial-angular}.
We refer to Figures\til\ref{fig:thm123},\til\ref{fig:thm56} and\til\ref{fig:thm89} to visualize the boundary curves of the confinement regions described in the various theorems.


Let us start considering the case of RA(ii).

\begin{theorem}\label{thm:RAii-n1}
	Let $m>0$, $n=1$ and $\V=vB^*A$ satisfying RA(ii). Then 
	\begin{equation*}
	|z^2-m^2|^{1/2} \le \frac{1}{2} \n{v}_{L^1}
	\end{equation*}
	for any $z\in\sigma_p(\D_{m,\V})$.
\end{theorem}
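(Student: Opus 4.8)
The plan is to apply the Birman–Schwinger principle exactly as sketched in Section~\ref{sec:main}. With the factorization $\V = \B^*\A$, $\A = aA$, $\B = bB$, $a = |v|^{1/2}$, $b = \overline{\sgn(v)}|v|^{1/2}$, a nonzero $z \in \sigma_p(\D_{m,\V})$ forces $-1 \in \sigma_p(K_z)$ with $K_z = \A(\D_m-z)^{-1}\B^*$, hence $\n{K_z} \ge 1$. The first step is to simplify $K_z$ using \eqref{Kz-1}. Under RA(ii) we have $A\alpha_k B^* = 0$ for $k \in \{1,\dots,n\}$, so all the derivative terms drop out, and since $AB^* = 0$, the term $\A (m\alpha_{n+1}+z) R_0(z^2-m^2)\B^*$ reduces to $m\, (A\alpha_{n+1}B^*)\, aR_0(z^2-m^2)b$. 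Thus $K_z = m\,(A\alpha_{n+1}B^*)\otimes\big(a R_0(z^2-m^2) b\big)$ acting on $L^2(\R;\C^N)$, and because $A\alpha_{n+1}B^* \ne 0$ is a fixed matrix with $\n{A\alpha_{n+1}B^*}$ some explicit constant (which I would want to normalize or track), the operator norm factors as $\n{K_z} = |m|\,\n{A\alpha_{n+1}B^*}\,\n{a R_0(z^2-m^2) b}_{L^2\to L^2}$. Here I should check that the rigidity normalization $|A|=|B|=1$ together with the Clifford relations pins down $\n{A\alpha_{n+1}B^*}$; if it is not automatically $1$, the constant enters the final bound, so I would either absorb it or show it equals $1$ in the RA(ii) case.

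The second step is the scalar resolvent estimate in dimension $n=1$. The kernel of $R_0(\zeta) = (-\Delta - \zeta)^{-1}$ on $\R$ is $\tfrac{i}{2\sqrt{\zeta}}e^{i\sqrt{\zeta}|x-y|}$ with $\operatorname{Im}\sqrt{\zeta}\ge 0$, so the operator $a R_0(\zeta) b$ has kernel bounded in modulus by $\tfrac{1}{2|\zeta|^{1/2}}|a(x)||b(y)| = \tfrac{1}{2|\zeta|^{1/2}}|v(x)|^{1/2}|v(y)|^{1/2}$. By Schur's test (or simply the Hilbert–Schmidt bound, since the kernel is a rank-one majorant $|v(x)|^{1/2}|v(y)|^{1/2}$), $\n{a R_0(\zeta) b}_{L^2\to L^2} \le \tfrac{1}{2|\zeta|^{1/2}}\n{v}_{L^1}$. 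Combining with $\n{K_z}\ge 1$ and $\zeta = z^2-m^2$ gives $1 \le \tfrac{m}{2|z^2-m^2|^{1/2}}\n{v}_{L^1}$ — but this has a spurious factor $m$, so I must instead use that the $L^1\to L^\infty$ operator norm bound on the kernel gives, after splitting $v = \bar b a$ symmetrically and using $\n{a f}_{L^2} \le \n{v}_{L^1}^{1/2}\n{f}_{L^\infty}$ etc., the sharp constant $\tfrac12$ without the mass factor. More carefully: the right route is $1 \le \n{K_z} \le \sup_{x,y}|\text{kernel of }K_z| \cdot$ (bookkeeping), which for the rank-one-majorized kernel yields $1 \le \tfrac12\,\n{A\alpha_{n+1}B^*}\,\cdot\,(\text{something})\,|z^2-m^2|^{-1/2}\n{v}_{L^1}$, and the theorem's clean statement $|z^2-m^2|^{1/2}\le \tfrac12\n{v}_{L^1}$ tells me the product of all matrix/mass constants is exactly $1$. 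So the real content of step two is verifying that $|m|\,\n{A\alpha_{n+1}B^*} = 1$ under RA(ii) in $n=1$, which should follow from the specific $2\times 2$ representation and $|A|=|B|=1$ — I'd confirm this against the explicit matrices of Section~\ref{sec:potential}.

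The main obstacle, then, is not the analysis — the one-dimensional resolvent bound is the classical Abramov–Aslanyan–Davies computation — but the algebra: showing that the rigidity normalization forces the prefactor to collapse to the AAD constant $\tfrac12$, independent of the mass $m$. I expect this works because, when $AB^*=0$, the nonvanishing piece $A\alpha_{n+1}B^*$ must carry "all of" $A$ and $B^*$ (since $\alpha_{n+1}^2 = I$ decomposes $\C^N$), so $\n{A\alpha_{n+1}B^*} = |A||B| = 1$; the mass $m$ then appears only through the argument $z^2-m^2$ of the Schrödinger resolvent, exactly as in the free-Dirac resolvent identity \eqref{eq:dirac-resolvent-identity}, and not as a multiplicative constant. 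A minor technical point to address first is the rigorous definition of $\D_{m,\V}$ and the precise form of the Birman–Schwinger principle for $v \in L^1$ only (so $\A,\B \in L^2_{\mathrm{loc}}$ but not bounded); I would invoke Lemma~\ref{lem:H_V} and Lemma~\ref{lem:BS} for this, noting that the $L^1\to L^\infty$ mapping properties of $R_0(\zeta)$ in $n=1$ make $K_z$ bounded (indeed compact) for $z \notin \sigma(\D_m)$, so the principle applies verbatim.
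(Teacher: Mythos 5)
Your strategy is the same as the paper's: Birman--Schwinger, the RA(ii) reduction of \eqref{Kz-1} to the single term $m\,(A\alpha_{n+1}B^*)\,aR_0(z^2-m^2)\overline{b}$, and the classical $1$-dimensional $L^1\to L^\infty$ bound for $R_0$ with constant $\tfrac12$. Your ``first'' computation, giving
\begin{equation*}
1 \le \n{K_z} \le \frac{m}{2}\,\lvert A\alpha_{2}B^*\rvert\,\lvert z^2-m^2\rvert^{-1/2}\n{v}_{L^1},
\end{equation*}
is exactly the paper's estimate \eqref{eq:bs-est-1} with $\varkappa=m$ (and H\"older in place of your rank-one kernel majorant; these are equivalent). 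The genuine gap is the step you then lean on to recover the stated constant: the claim that $m\,\lvert A\alpha_{n+1}B^*\rvert=1$ under RA(ii) in $n=1$. This cannot be true, because $m>0$ is a free parameter while $A,B$ are fixed matrices with $|A|=|B|=1$; the normalization only gives $\lvert A\alpha_{2}B^*\rvert\le 1$, and for the paper's explicit $1$-dimensional RA(ii) matrices ($A_\pm=\rho^0_\pm$, $B_\pm=\tau^0_\pm$ of Section~\ref{sec:potential}) one computes $A\sigma_3B^*=\pm i\,(I_2+\sigma_1)/2$, of norm exactly $1$, so the product is $m$, not $1$. The factor $m$ does not come from the scalar resolvent estimate and cannot be removed by redoing that estimate more carefully: it enters through the matrix factor $m\alpha_{n+1}$ in the Dirac resolvent identity \eqref{eq:dirac-resolvent-identity}, which under RA(ii) is the \emph{only} surviving term. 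A scaling check confirms this is not an artifact: replacing $v$ by $\lambda^{-1}v(\cdot/\lambda)$ leaves $\n{v}_{L^1}$ invariant but rescales $m$ and $\lvert z^2-m^2\rvert^{1/2}$ by $\lambda^{-1}$, so no $m$-independent bound of the stated form can follow from a scale-covariant argument; the inequality the method actually proves is $\lvert z^2-m^2\rvert^{1/2}\le\frac m2\n{v}_{L^1}$.

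You should also be aware that the difficulty you ran into reflects an inconsistency in the source: the paper's proof defines $\varkappa=m$ for RA(ii) with $m>0$, but then asserts ``$\varkappa\equiv1$'' when concluding Theorems~\ref{thm:RAii-n1}--\ref{thm:RAii-dist}, and Theorem~\ref{thm:RAii-n1} (and the caption value $D_{1/2,1,m}=1/2$) states the $m$-independent constant $\tfrac12$. Your derivation with $\tfrac m2$ is the one the argument supports; for Theorems~\ref{thm:RAii} and~\ref{thm:RAii-dist} the discrepancy is harmless because the constant $D_{\gamma,n,m}$ is allowed to depend on $m$, but in Theorem~\ref{thm:RAii-n1} the factor $m$ should appear. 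So: fix your write-up by dropping the claim $m\lvert A\alpha_{n+1}B^*\rvert=1$ and stating the conclusion with $\tfrac m2$ (or with $\tfrac m2\lvert A\alpha_2B^*\rvert$). One further small point: for possible embedded eigenvalues $z\in\sigma_p(\D_{m,\V})\cap\sigma(\D_m)$ the implication ``$z$ eigenvalue $\Rightarrow -1\in\sigma_p(K_z)$'' is not available; you need part (ii) of Lemma~\ref{lem:BS} (the $\liminf_{\e\to0^\pm}\n{K_{z+i\e}}\ge1$ version), which works here because your bound on $\n{K_{z+i\e}}$ is continuous up to the real axis away from $\pm m$.
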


\begin{theorem}\label{thm:RAii}
	Let $m>0$, $n\in\N\setminus\{1,2,4\}$ and $\V=vB^*A$ satisfying RA(ii). There exists $D_{\gamma,n,m}>0$
	such that
	\begin{equation*}
	|z^2-m^2|^{\gamma} \le D_{\gamma,n,m}
	\left\{
	\begin{aligned}
	&\n{v}_{L^{\gamma+n/2}}^{\gamma+n/2}
	&&\text{for $0<\gamma\le\frac{1}{2}$,}
	\\
	&\n{v}_{L_\rho^{\gamma+n/2} L_\theta^\infty}^{\gamma+n/2}
	&&\text{for $\frac{1}{2}<\gamma< \frac{n}{2}$,}
	\\
	&\n{v}_{L_\rho^{n,1} L_\theta^\infty}^{n}
	&&\text{for $\gamma=\frac{n}{2}$,}
	\end{aligned}
	\right.
	\end{equation*}
	for any $z\in\sigma_p(\D_{m,\V})$.
	
	In the case $\gamma=0$, there exists $D_{0,n}>0$ such that, if
	\begin{equation*}
		\n{v}_{L^{n/2}} < D_{0,n,m}
	\end{equation*}
	then $$\sigma(\D_{m,\V})=\sigma_c(\D_{m,\V})=\sigma(\D_{m})=(-\infty,-m]\cup[m,\infty)$$ and in particular $\sigma_p(\D_{m,\V})=\varnothing$.
\end{theorem}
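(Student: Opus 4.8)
The plan is to use the Birman--Schwinger principle of Lemma~\ref{lem:BS} to turn the eigenvalue problem for $\D_{m,\V}$ into one for a \emph{purely Schr\"odinger} operator at the shifted energy $z^2-m^2$, and then to feed in the resolvent estimates collected in Section~\ref{sec:estimates}. Assumption RA(ii) is exactly what makes this reduction possible.

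First I would fix $z\in\rho(\D_m)$, equivalently $\zeta:=z^2-m^2\in\C\setminus[0,\infty)$, and rewrite the Birman--Schwinger operator $K_z=\A(\D_m-z)^{-1}\B^*$. With the rigidity splitting one has $\A=|v|^{1/2}A$ and $\B^*=\sgn(v)|v|^{1/2}B^*$; inserting \eqref{eq:dirac-resolvent-identity} as in \eqref{Kz-1} and commuting the scalar operators $\partial_kR_0(\zeta)$ and $R_0(\zeta)$ past the constant matrices $A,B^*$, every resulting summand carries a matrix prefactor of the form $A\alpha_kB^*$ ($k\le n$), $A\alpha_{n+1}B^*$ or $AB^*$. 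Under RA(ii) the first $n$ of these and the last one vanish, leaving
\[
K_z \;=\; m\,(A\alpha_{n+1}B^*)\otimes T_z, \qquad T_z:=|v|^{1/2}\,R_0(z^2-m^2)\,\sgn(v)\,|v|^{1/2},
\]
so that $K_z$ coincides, up to the nonzero constant matrix factor $m\,(A\alpha_{n+1}B^*)$, with the Schr\"odinger Birman--Schwinger operator at energy $z^2-m^2$ (for unbounded $v$ both this identity and the meaning of $\D_{m,\V}$ are supplied by Lemma~\ref{lem:H_V}).

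If now $z\in\sigma_p(\D_{m,\V})$, then $-1\in\sigma_p(K_z)$, hence $\n{K_z}_{L^2\to L^2}\ge1$; since the operator norm of such a tensor product factorizes and $A\alpha_{n+1}B^*\neq0$ by RA(ii), this forces $\n{T_z}_{L^2\to L^2}\ge(m|A\alpha_{n+1}B^*|)^{-1}=:c_0>0$ (equivalently, diagonalising $A\alpha_{n+1}B^*$ shows that $z^2-m^2$ is an eigenvalue of some rescaled Schr\"odinger operator $-\Delta+m\mu v$, so that \eqref{eq:schrodinger-enclosures} could be quoted verbatim). I would then apply the resolvent estimates of Section~\ref{sec:estimates}: Lemma~\ref{resest-T1} on the conjugate line for $0<\gamma\le\frac12$, together with its radial--angular and Lorentz refinements for $\frac12<\gamma<\frac n2$ and $\gamma=\frac n2$, combined with H\"older's inequality as in the ``receipt'' described after \eqref{Kz-1}; these bound $\n{T_z}_{L^2\to L^2}$ by $D'_{\gamma,n}\,|z^2-m^2|^{-\gamma/(\gamma+n/2)}$ times $\n{v}_{L^{\gamma+n/2}}$, resp.\ $\n{v}_{L_\rho^{\gamma+n/2}L_\theta^\infty}$, resp.\ $\n{v}_{L_\rho^{n,1}L_\theta^\infty}$. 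Combining this upper bound with $\n{T_z}_{L^2\to L^2}\ge c_0$ and raising to the power $\gamma+n/2$ yields the stated inequality, with a constant $D_{\gamma,n,m}$ read off from $D'_{\gamma,n}$ and the factor $m|A\alpha_{n+1}B^*|\le m$. For $\gamma=0$ the relevant resolvent bound is uniform in $\zeta$, namely $\n{T_z}_{L^2\to L^2}\le D_{0,n}\n{v}_{L^{n/2}}$, so choosing $D_{0,n,m}$ suitably small (depending on $m$, $n$ and $|A\alpha_{n+1}B^*|$) makes $\n{v}_{L^{n/2}}<D_{0,n,m}$ incompatible with $\n{T_z}_{L^2\to L^2}\ge c_0$; hence $\D_{m,\V}$ has no eigenvalue in $\rho(\D_m)$, and, the essential spectrum being stable under the $L^{n/2}$ perturbation $\V$, a limiting-absorption version of the same resolvent estimate also excludes eigenvalues embedded in $\sigma(\D_m)=(-\infty,-m]\cup[m,\infty)$, giving $\sigma(\D_{m,\V})=\sigma_c(\D_{m,\V})=\sigma(\D_m)$.

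The genuinely new step is the algebraic collapse $K_z=m\,(A\alpha_{n+1}B^*)\otimes T_z$, which is immediate once RA(ii) is imposed; after that one is essentially running the Schr\"odinger argument of \cite{Frank11,FrankSimon17} through the substitution $\zeta=z^2-m^2$. I expect the fiddly points to be, on one hand, justifying the factorization $\V=\B^*\A$, the identity for $K_z$, and the Birman--Schwinger equivalence itself when $v$ is merely in $L^{\gamma+n/2}$ (this is precisely what Lemma~\ref{lem:H_V} is designed to handle), and, on the other hand, the last clause of the theorem in the endpoint case $\gamma=0$: establishing $\sigma_{\mathrm e}(\D_{m,\V})=\sigma(\D_m)$ and, above all, ruling out embedded eigenvalues, which requires the boundary-value form of the resolvent bound rather than the off-real one.
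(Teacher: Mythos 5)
Your proposal is correct and follows essentially the same route as the paper: RA(ii) collapses the Birman--Schwinger operator to $m\,A\alpha_{n+1}B^*$ times the scalar operator $|v|^{1/2}R_0(z^2-m^2)\sgn(v)|v|^{1/2}$, which is then estimated via H\"older's inequality and the conjugate-line resolvent bounds of Lemmata~\ref{resest-T1} and~\ref{resest-T2}, exactly as in Subsection~\ref{subsec:proof}. The only cosmetic difference is the endpoint $\gamma=0$: instead of your essential-spectrum-stability plus limiting-absorption sketch, the paper simply observes that the uniform bound $\sup_{z\in\rho(\D_m)}\n{K_z}<1$ allows a direct application of Lemma~\ref{lem:BS2}, which yields $\sigma(\D_{m,\V})=\sigma_c(\D_{m,\V})=\sigma(\D_m)$ (and the absence of embedded eigenvalues) in one stroke.
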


\begin{theorem}\label{thm:RAii-dist}
	Let $m>0$, $n\in\N\setminus\{1,2,4\}$, $\gamma>1/2$ and $\V=vB^*A$ satisfying RA(ii). There exists $D_{\gamma,n,m}>0$ such that
	\begin{equation*}
	|z^2-m^2|^{1/2}  
	\dist(z^2-m^2,[0,\infty))^{\gamma-1/2} 
	\le D_{\gamma,n,m} \n{v}_{L^{\gamma+n/2}}^{\gamma+n/2}
	%
	%
	\end{equation*}
	for any $z\in\sigma_p(\D_{m,\V})$. In the case $\gamma=\infty$, the above relation is replaced by
	\begin{equation*}
	\dist(z^2-m^2,[0,\infty)) \le D_{\infty,n,m} \n{v}_{L^\infty}.
	\end{equation*}
\end{theorem}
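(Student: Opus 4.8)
\textbf{Proof proposal for Theorem \ref{thm:RAii-dist}.}

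The plan is to run the Birman--Schwinger machinery exactly as set up in the Introduction, exploiting RA(ii) to kill all the derivative terms and the $AB^*$ term, and then to feed in Frank's distance-type resolvent estimate for the Schr\"odinger operator (the analogue of \eqref{eq:S-dist}, to be recalled in Section \ref{sec:estimates}). Concretely: with the factorization $\V = \B^*\A$, $\A = aA$, $\B = bB$, $a = |v|^{1/2}$, $b = \overline{\sgn v}\,|v|^{1/2}$ dictated by the Rigidity Assumptions, formula \eqref{Kz-1} gives
\begin{equation*}
	K_z = \A(\D_m - z)^{-1}\B^* = -i\sum_{k=1}^n (A\alpha_k B^*)\, a\,\partial_k R_0(z^2-m^2)\,\overline{b} + (A\alpha_{n+1}B^*)\, m\, a\,R_0(z^2-m^2)\,\overline{b} + (AB^*)\, z\, a\,R_0(z^2-m^2)\,\overline{b}.
\end{equation*}
Under RA(ii) we have $A\alpha_k B^* = 0$ for $k = 1,\dots,n$ and $AB^* = 0$, so only the middle term survives, and $K_z = m\,(A\alpha_{n+1}B^*)\, a\, R_0(z^2-m^2)\,\overline{b}$. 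The operator norm bound $|A\alpha_{n+1}B^*| \le |A|\,|\alpha_{n+1}|\,|B^*| = 1$ (using $|A| = |B| = 1$ and $|\alpha_{n+1}| = 1$) lets us discard the matrix factor, so $\n{K_z} \le m\, \n{\,|v|^{1/2} R_0(z^2-m^2) |v|^{1/2}\,}$, where the right-hand norm is the scalar Schr\"odinger Birman--Schwinger operator evaluated at spectral parameter $z^2 - m^2$.

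Next I would invoke the Birman--Schwinger principle (Lemma \ref{lem:BS}, applied to $\D_{m,\V}$): if $z \in \sigma_p(\D_{m,\V})$ with $z^2 - m^2 \notin [0,\infty)$ — equivalently $z \in \rho(\D_m)$ — then $-1 \in \sigma_p(K_z)$, hence $\n{K_z} \ge 1$. Combining with the previous display,
\begin{equation*}
	1 \le m\, \n{\,|v|^{1/2} R_0(z^2-m^2) |v|^{1/2}\,}_{L^2 \to L^2}.
\end{equation*}
Now I apply Frank's Schr\"odinger resolvent estimate \eqref{eq:S-dist}: for $\gamma > 1/2$ and $\zeta \notin [0,\infty)$,
\begin{equation*}
	\n{\,|v|^{1/2} R_0(\zeta) |v|^{1/2}\,}_{L^2\to L^2} \le C_{\gamma,n}\, |\zeta|^{-1/2}\, \dist(\zeta,[0,\infty))^{1/2-\gamma}\, \n{v}_{L^{\gamma+n/2}}^{\gamma+n/2},
\end{equation*}
with $\zeta = z^2 - m^2$. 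Inserting this and rearranging yields
\begin{equation*}
	|z^2-m^2|^{1/2}\, \dist(z^2-m^2,[0,\infty))^{\gamma-1/2} \le m\, C_{\gamma,n}\, \n{v}_{L^{\gamma+n/2}}^{\gamma+n/2} =: D_{\gamma,n,m}\, \n{v}_{L^{\gamma+n/2}}^{\gamma+n/2},
\end{equation*}
which is the claimed bound; the case $z^2-m^2 \in [0,\infty)$ (embedded eigenvalues) is handled separately, or excluded, in the statement of Lemma \ref{lem:BS}. The $\gamma = \infty$ case is the trivial bound \eqref{eq:S-dist-oo}: from $1 \le m\,\n{|v|^{1/2}R_0(z^2-m^2)|v|^{1/2}}$ and the elementary estimate $\n{R_0(\zeta)}_{L^2\to L^2} = \dist(\zeta,[0,\infty))^{-1}$ one gets $\dist(z^2-m^2,[0,\infty)) \le m\,\n{v}_{L^\infty}$.

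The main obstacle is not any of the steps above — each is essentially bookkeeping once the right lemmas are in place — but rather the subtle point flagged in Remark \ref{rem:dirac}: one must verify that RA(ii) is actually \emph{satisfiable} in the stated dimension range $n \in \N\setminus\{1,2,4\}$, i.e. that constant matrices $A,B$ with $A\alpha_k B^* = 0$ ($k \le n$), $A\alpha_{n+1}B^* \ne 0$, $AB^* = 0$, $B^*A \ne 0$ genuinely exist. That construction, which is representation-dependent and is the real content of Section \ref{sec:potential}, is what ultimately pins down both the hypotheses and the dimensional restrictions; the resolvent/Birman--Schwinger part of the argument is robust and uniform across all four cases RA(i)--(iv), differing only in which terms of \eqref{Kz-1} survive. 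A secondary technical care point is the rigorous definition of $\D_{m,\V}$ as an $m$-sectorial (or closed) operator for $v$ only in $L^{\gamma+n/2}$ rather than bounded, which is deferred to Lemma \ref{lem:H_V} and the self-adjoint-form / relatively-bounded machinery of Section \ref{sec:BS}, so that the Birman--Schwinger equivalence is legitimate.
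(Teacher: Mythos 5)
Your strategy is the paper's: under RA(ii) the Birman--Schwinger operator collapses to $m\,(A\alpha_{n+1}B^*)\,a\,R_0(z^2-m^2)\,\overline{b}$, the matrix factor is discarded via $|A|=|B|=|\alpha_{n+1}|=1$, and the bound follows from the abstract Birman--Schwinger principle combined with a distance-type $L^p\to L^{p'}$ resolvent estimate on the duality line; in the paper this estimate is Lemma~\ref{resest-T3} (Kwon--Lee), which produces \eqref{eq:bs-est-dist} with $\varkappa=m$, and embedded eigenvalues $z^2-m^2\in[0,\infty)$ are \emph{not} excluded but covered by part (ii) of Lemma~\ref{lem:BS} through the $\liminf$ of $\n{K_{z+i\e}}$ — your hedge on this point should be resolved in that direction.

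There is, however, one step that fails as written: the intermediate estimate you attribute to Frank,
\begin{equation*}
\n{\,|v|^{1/2}R_0(\zeta)|v|^{1/2}\,}_{L^2\to L^2}\le C_{\gamma,n}\,|\zeta|^{-1/2}\,\dist(\zeta,[0,\infty))^{1/2-\gamma}\,\n{v}_{L^{\gamma+n/2}}^{\gamma+n/2},
\end{equation*}
is false as a standalone inequality: replacing $v$ by $\lambda v$ scales the left-hand side like $\lambda$ and the right-hand side like $\lambda^{\gamma+n/2}$, so it breaks down as $\lambda\to0^+$. The correct input (Lemma~\ref{resest-T3}.(ii) on the conjugate line with $1/p-1/p'=1/r$, $r=\gamma+n/2$, combined with H\"older as in the paper) carries the \emph{first} power of the norm and the exponents divided by $r$, namely
\begin{equation*}
\n{\,|v|^{1/2}R_0(\zeta)|v|^{1/2}\,}_{L^2\to L^2}\le C\,|\zeta|^{-\frac{1}{2r}}\,\dist(\zeta,[0,\infty))^{-\frac{\gamma-1/2}{r}}\,\n{v}_{L^{r}},
\end{equation*}
and one raises to the power $r$ only \emph{after} inserting it into $1\le m\,\n{\,|v|^{1/2}R_0(z^2-m^2)|v|^{1/2}\,}$; this yields the stated bound with $D_{\gamma,n,m}\sim (Cm)^{\gamma+n/2}$ (note the $m$-dependence is to the power $\gamma+n/2$, not linear, a harmless point since the constant may depend on $m$). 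With this repair your argument coincides with the paper's proof, and your treatment of $\gamma=\infty$ via $\n{R_0(\zeta)}_{L^2\to L^2}=\dist(\zeta,[0,\infty))^{-1}$ is fine. Finally, the satisfiability of RA(ii) (Section~\ref{sec:potential}) is a non-vacuity issue, not a step of this proof — it is what forces $n\neq2,4$, while $n\neq1$ comes from the resolvent estimates being used for $n\ge2$ — so it is not the "main obstacle" here.
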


\begin{remark}
	Note that, since
	\begin{equation*}
	\dist(z, [0,\infty))
	=
	\left\{
	\begin{aligned}
	& |\Im z| &&\text{if $\Re z \ge 0$,}
	\\
	& |z| &&\text{if $\Re z \le 0$,}
	\end{aligned}
	\right.
	\end{equation*}
	then
	\begin{equation*}
	\dist(z^2-m^2,[0,\infty))
	=
	\left\{
	\begin{aligned}
	& 2|\Re z||\Im z| &&\text{if $(\Re z)^2-(\Im z)^2 \ge m^2$,}
	\\
	& |z^2-m^2| &&\text{if $(\Re z)^2-(\Im z)^2 \le m^2$.}
	\end{aligned}
	\right.
	\end{equation*}
\end{remark}


\begin{figure}[p!]
	\begin{subfigure}[b]{\textwidth}
		\centering
		\resizebox{\linewidth}{!}{
			\includegraphics[scale=1]{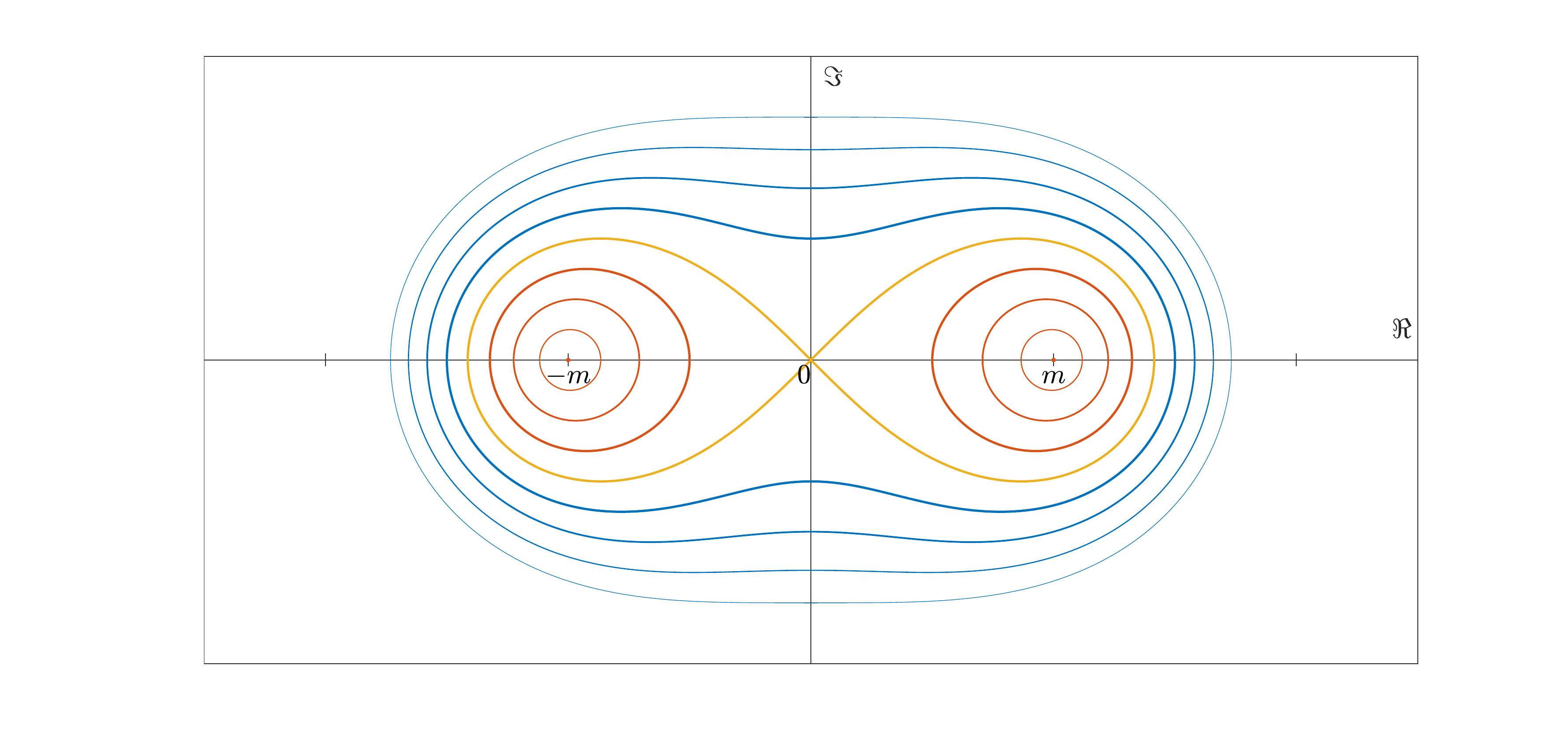}
		}
		\caption{Case of Theorems\til\ref{thm:RAii-n1} and\til\ref{thm:RAii}.}
	\end{subfigure}
	
	\begin{subfigure}[b]{\textwidth}
		\centering
		\resizebox{\linewidth}{!}{
			\includegraphics[scale=1]{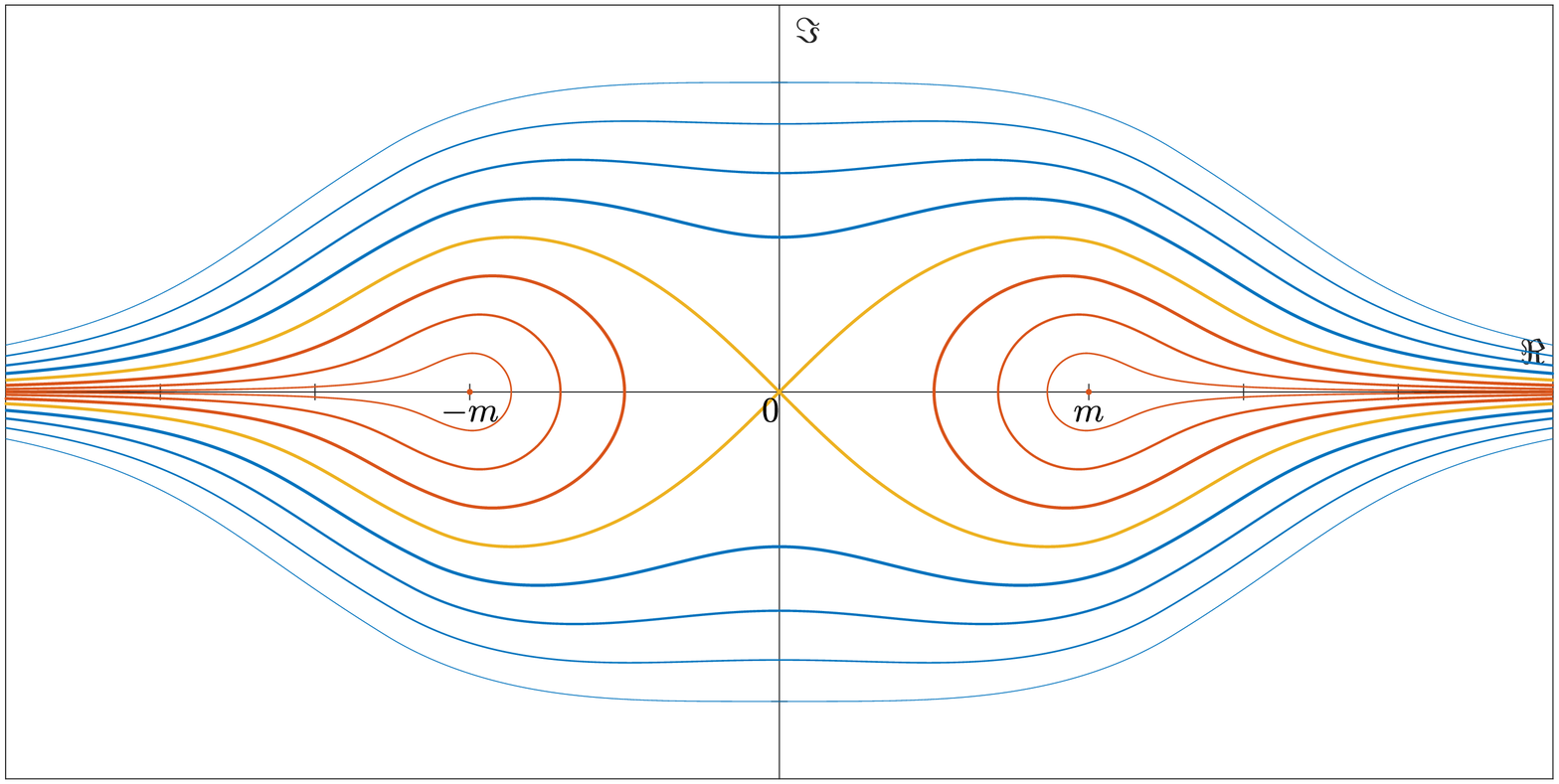}
		}
		\caption{Case of Theorem\til\ref{thm:RAii-dist}.}
	\end{subfigure}
	\caption{\small The plots of the boundary curves corresponding to the spectral enclosures described in Theorems\til\ref{thm:RAii-n1},\til\ref{thm:RAii} and\til\ref{thm:RAii-dist}, for various values of the norm of the potential. 
	\newline
	When $\beta := D_{\gamma,n,m}\n{v}^{\gamma+n/2}=1$, where $D_{1/2,1,m}=1/2$ and $\n{v}$ is one of the norms appearing in the theorems, we have two regions joined only in the origin (in yellow). If $\beta<1$ there are two disconnected regions (in red), while if $\beta>1$ there is one connected region (in blue). 
	\newline
	The curves in picture (a) are known as Cassini ovals with foci in $m$ and $-m$.}
	\label{fig:thm123}
\end{figure}


The results collected in the three theorems above should be compared with the corresponding ones for the Schr\"odinger operator, respectively \eqref{eq:AAD}, \eqref{eq:schrodinger-enclosures} and \eqref{eq:S-dist}--\eqref{eq:S-dist-oo}.
We supposed RA(ii) with positive mass $m>0$, which means, looking \eqref{Kz-1}, that 
\begin{equation*}
	\A (\D_m -z)^{-1} \B^*
	=
	m [A \alpha_{n+1} B^*] [ aR_0(z^2-m^2) \overline{b} ].
\end{equation*}
Roughly speaking, the Birman-Schwinger operator for $\D_{m}+\V$ behaves (more or less) as the Birman-Schwinger operator for $-\Delta+v$. This explains the strict connection between the Dirac and Schr\"odinger results.

If we consider RA(ii) with $m=0$, or instead RA(iv), then the Birman-Schwinger operator for Dirac vanish identically, implying the following result of spectral stability.

\begin{theorem}\label{thm:RAii-iv}
	Let $n\in\N\setminus\{2,4\}$, $m=0$ and $\V=vB^*A$ satisfying RA(ii), or alternatively $n\in\N\setminus\{1,2\}$, $m\ge0$ and $\V=vB^*A$ satisfying RA(iv).
	Then $$\sigma(\D_{m,\V})=\sigma_c(\D_{m,\V})=\sigma(\D_{m})=(-\infty,-m]\cup[m,\infty)$$ and in particular $\sigma_p(\D_{m,\V})=\varnothing$.
\end{theorem}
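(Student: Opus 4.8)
\textbf{Proof proposal for Theorem~\ref{thm:RAii-iv}.}

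The plan is to show that, under either of the two stated hypotheses, the Birman--Schwinger operator $K_z = \A(\D_m - z)^{-1}\B^*$ vanishes identically for every $z \in \rho(\D_m) = \C\setminus\sigma(\D_m)$, and then to invoke the Birman--Schwinger principle (Lemma~\ref{lem:BS}) together with the well-posedness result (Lemma~\ref{lem:H_V}) to conclude that $\D_{m,\V}$ has no eigenvalues off the essential spectrum, and finally that the essential spectrum itself is unperturbed. First I would expand $K_z$ using the resolvent identity \eqref{eq:dirac-resolvent-identity} exactly as in \eqref{Kz-1}, obtaining
\begin{equation*}
	\A(\D_m - z)^{-1}\B^*
	= -i\sum_{k=1}^n [A\alpha_k B^*]\, [a\,\partial_k R_0(z^2-m^2)\,\overline{b}]
	+ m[A\alpha_{n+1}B^*]\,[aR_0(z^2-m^2)\overline{b}]
	+ z[AB^*]\,[aR_0(z^2-m^2)\overline{b}].
\end{equation*}
The Rigidity Assumptions always force $A\alpha_k B^* = 0$ for $k\in\{1,\dots,n\}$, so the first sum drops. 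Under RA(iv) both $A\alpha_{n+1}B^* = 0$ and $AB^* = 0$, so the remaining two terms vanish too; under RA(ii) with $m=0$ the middle term carries a factor $m=0$ and the last term vanishes because $AB^*=0$. In either case $K_z \equiv 0$ on $\rho(\D_m)$.

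Next I would run the Birman--Schwinger principle backwards: since $-1$ is never an eigenvalue of the zero operator, no $z\in\rho(\D_m)$ can be an eigenvalue of $\D_{m,\V}$, hence $\sigma_p(\D_{m,\V})\cap\rho(\D_m) = \varnothing$. To promote this to the full statement $\sigma(\D_{m,\V}) = \sigma(\D_m)$ I would argue that, because $K_z$ is identically zero, the perturbed resolvent coincides with the free one wherever it is defined: for $z\in\rho(\D_m)$ the second resolvent (Konno--Kuroda / Birman--Schwinger) formula reads $(\D_{m,\V}-z)^{-1} = (\D_m-z)^{-1} - (\D_m-z)^{-1}\B^*(I+K_z)^{-1}\A(\D_m-z)^{-1}$, and with $K_z=0$ and the correction term vanishing (this uses that $\A(\D_m-z)^{-1}$ and $(\D_m-z)^{-1}\B^*$ compose through the same matrix products $A\alpha_j B^*$, $A\alpha_{n+1}B^*$, $AB^*$ that were just shown to be zero), one gets $(\D_{m,\V}-z)^{-1} = (\D_m-z)^{-1}$. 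Thus $\rho(\D_{m,\V}) \supseteq \rho(\D_m)$, i.e. $\sigma(\D_{m,\V})\subseteq\sigma(\D_m)$; the reverse inclusion, and the identification of the whole spectrum as purely continuous, follows from the stability of the essential spectrum under the relatively-compact-type perturbation handled in Section~\ref{sec:BS} (the same mechanism that gives $\sigma_e(\D_{m,\V}) = \sigma_e(\D_m) = \sigma(\D_m)$ in Theorem~\ref{thm:RAii}), combined with the absence of eigenvalues and of residual spectrum.

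The main obstacle is not the algebraic vanishing of $K_z$, which is immediate from the Rigidity Assumptions, but the functional-analytic bookkeeping needed to make the Birman--Schwinger principle and the resolvent identity rigorous for potentials $\V = vV$ that are merely in some $L^p$ or radial-angular class rather than bounded: one must check that $\A(\D_m-z)^{-1}\B^*$ is a well-defined bounded (indeed compact) operator, that $\D_{m,\V}$ is the operator constructed in Lemma~\ref{lem:H_V}, and that the factorization $\V = \B^*\A$ with $a=|v|^{1/2}$, $b=\overline{\sgn(v)}|v|^{1/2}$ is admissible in the sense required there. Since all of this machinery is exactly what Section~\ref{sec:BS} sets up for the other theorems (where $K_z$ is nonzero but bounded), I would simply quote it, and the proof reduces to the one-line observation that here the relevant norm bound on $K_z$ is $\n{K_z}=0<1$, so the hypotheses of the Birman--Schwinger argument are satisfied for \emph{every} $z\in\rho(\D_m)$ with no smallness assumption on $v$ whatsoever.
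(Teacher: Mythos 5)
The first half of your proposal is exactly the paper's argument: under RA(iv), or RA(ii) with $m=0$, every matrix coefficient surviving in \eqref{Kz-1} vanishes, so the Birman--Schwinger operator $K_z$ is identically zero on $\rho(\D_m)$, and Lemma~\ref{lem:BS} excludes eigenvalues there. Up to that point you are on the paper's track.

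The promotion to the full spectral identity, however, contains a genuine error. With $K_z=0$ the correction term in the second resolvent formula is $\overline{(\D_m-z)^{-1}\B^*}\,\A(\D_m-z)^{-1}$, whose matricial content, after expanding both resolvents via \eqref{eq:dirac-resolvent-identity}, consists of products such as $\alpha_j B^*A\,\alpha_k$, $\alpha_{n+1}B^*A$ and $B^*A$ --- i.e.\ products with $B^*A=V$ sandwiched in the middle. The Rigidity Assumptions only annihilate products of the opposite form $A\alpha_k B^*$, $A\alpha_{n+1}B^*$, $AB^*$, and indeed $V=B^*A\neq0$ is part of the assumptions; so the correction term does \emph{not} vanish, and your conclusion $(\D_{m,\V}-z)^{-1}=(\D_m-z)^{-1}$ cannot hold (it would force $\V=0$). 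The subsequent appeal to ``relatively-compact-type'' stability of the essential spectrum is also not available: no compactness of $K_z$ or of the perturbation is established anywhere, and the absence of residual spectrum would still need a separate argument. The tool that actually closes the proof --- and the one the paper uses --- is Lemma~\ref{lem:BS2}: since $\sup_{z\in\rho(\D_m)}\n{K_z}=0<1$, that lemma directly yields $\sigma(\D_{m,\V})=\sigma(\D_m)$, $\sigma_p(\D_{m,\V})\cup\sigma_r(\D_{m,\V})\subseteq\sigma_p(\D_m)=\varnothing$ and $\sigma(\D_{m,\V})=\sigma_c(\D_{m,\V})=(-\infty,-m]\cup[m,\infty)$, with no resolvent identity and no compactness needed. (If you insist on the resolvent formula, what gives $z\in\rho(\D_{m,\V})$ is the invertibility of $I+K_z$, not the vanishing of the correction term.)
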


We stress out again that the above results does not require any smallness assumption on the potential, even if, of course, the regions of confinement described in Theorems\til\ref{thm:RAii-n1},\til\ref{thm:RAii} and\til\ref{thm:RAii-dist} become larger and larger when the norm of $v$ increases.

Let us wonder now what happens removing the condition $AB^*=0$. As we see from the following theorems, the requirement that the potential should be small pops up again. Moreover, we find a compact localization for the eigenvalues (or their absence) only respect to the $L^1$-norm when $n=1$, and to the $L_\rho^{n,1} L_\theta^\infty$-norm when $n\ge2$. 

About the localization around the continuous spectrum of the free operator, it is not so nice as that in Theorem\til\ref{thm:RAii-dist}, where the region of confinement, even if unbounded, \lq\lq narrows'' around $\sigma(\D_{m})$. Denoting for simplicity with $\mathcal{N}$ one of the region described in Theorems\til\ref{thm:RAiii-dist} and\til\ref{thm:RAi-dist}, we have that it \lq\lq become wider'' around $\sigma(\D_{m})$, even if the sections $\mathcal{N} \cap \{z\in\C \colon \Re z = x_0\}$ are compact for any fixed $x_0\in\R$. Also, we need to require $\gamma\ge n/2$, otherwise the region $\mathcal{N}$ would be the complement of a bounded set, and hence not so interesting (see Subsection\til\ref{subsec:proof}).

Hence, let us state now the results assuming RA(iii) and RA(i) respectively.


\begin{theorem}\label{thm:RAiii}
	Let $n\in\N\setminus\{2,4\}$, $m\ge0$ and $\V=vB^*A$ satisfying RA(iii). 
	Moreover, let us set for simplicity 
	\begin{equation*}
		\n{\cdot} := 
		\begin{cases}
			\n{\cdot}_{L^1} & \text{if $n=1$,}
			\\
			\n{\cdot}_{L^{n,1}_\rho L^\infty_\theta} & \text{if $n\ge2$.}
		\end{cases}
	\end{equation*}
	%
	%
	There exists $C_0>0$ such that, if $\n{v}<C_0$ and $m>0$, then
	\begin{equation*}
		|z^2-m^2|^{1/2} |z|^{-1} 
		\le 
		C_0^{-1} \n{v}
	\end{equation*}
	for any $z\in \sigma_p(\D_{m,\V})$, whereas, if $\n{v}<C_0$ and $m=0$, then
	\begin{equation*}
		\sigma(\D_{0,\V})=\sigma_c(\D_{0,\V})=\sigma(\D_{0})=\R
	\end{equation*}
	and in particular $\sigma_p(\D_{0,\V})=\varnothing$.
	
	If $n=1$, we can take $C_0=2$.
\end{theorem}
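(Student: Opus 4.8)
The plan is to run the Birman--Schwinger scheme outlined in Section~\ref{sec:main}, specialized to RA(iii). First I would substitute the factorization $\A=aA$, $\B=bB$ (so that $\B^*=\bar b\,B^*$) into the resolvent identity \eqref{Kz-1}, keeping spatial and matricial parts separated as tensor products of operators on $L^2(\R^n)$ with matrices on $\C^N$. Under RA(iii) the relations $A\alpha_k B^*=0$ ($k=1,\dots,n$) annihilate every first-derivative contribution $\A\alpha_k\partial_k R_0(z^2-m^2)\B^*$, and $A\alpha_{n+1}B^*=0$ annihilates the mass term; only the piece proportional to $AB^*\neq0$ survives. Hence, writing $\zeta:=z^2-m^2\in\C\setminus[0,\infty)$ for $z\in\rho(\D_m)$,
\begin{equation*}
	K_z \;=\; \A(\D_m-z)^{-1}\B^* \;=\; z\,\bigl(a\,R_0(\zeta)\,\bar b\,\bigr)\tensor\bigl(AB^*\bigr),
\end{equation*}
whose $L^2(\R^n;\C^N)$-operator norm factorizes as $\n{K_z}=|z|\,|AB^*|\,\n{a\,R_0(\zeta)\,\bar b}_{L^2(\R^n)\to L^2(\R^n)}$, with $|AB^*|\le|A|\,|B|=1$.

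Second, I would invoke the Birman--Schwinger principle of Section~\ref{sec:BS} (Lemma~\ref{lem:BS}, with $\D_{m,\V}$ rigorously defined through Lemma~\ref{lem:H_V}, where the smallness of $\n{v}$ is used): for $z\in\sigma_p(\D_{m,\V})\cap\rho(\D_m)$ one has $-1\in\sigma_p(K_z)$, hence $\n{K_z}\ge1$. Third, I would estimate $\n{a R_0(\zeta)\bar b}_{L^2\to L^2}$ by the appropriate Schr\"odinger resolvent bound from Section~\ref{sec:estimates}: for $n=1$, the elementary kernel bound $|R_0(\zeta)(x,y)|\le(2|\zeta|^{1/2})^{-1}$ with Cauchy--Schwarz and $\n{a}_{L^2}\n{b}_{L^2}=\n{v}_{L^1}$ gives $\n{aR_0(\zeta)\bar b}_{L^2\to L^2}\le\tfrac12|\zeta|^{-1/2}\n{v}_{L^1}$; for $n\ge2$, the endpoint $\gamma=n/2$ uniform resolvent estimate in the radial--angular norm gives $\n{aR_0(\zeta)\bar b}_{L^2\to L^2}\le C\,|\zeta|^{-1/2}\n{v}_{L^{n,1}_\rho L^\infty_\theta}$, uniformly in $\zeta\in\C\setminus[0,\infty)$. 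Combining with $\n{K_z}\ge1$,
\begin{equation*}
	1\;\le\;|z|\,\n{aR_0(z^2-m^2)\bar b}_{L^2\to L^2}\;\le\;C\,\frac{|z|}{|z^2-m^2|^{1/2}}\,\n{v},
\end{equation*}
which rearranges into $|z^2-m^2|^{1/2}|z|^{-1}\le C_0^{-1}\n{v}$ with $C_0:=C^{-1}$, and $C=1/2$, $C_0=2$ when $n=1$. In the massive case $m>0$ one then checks the elementary fact that, whenever $C_0^{-1}\n{v}<1$, the set $\{\,z:\ |z^2-m^2|^{1/2}|z|^{-1}\le C_0^{-1}\n{v}\,\}$ forces $m^2/(1+C_0^{-2}\n{v}^2)\le|z|^2\le m^2/(1-C_0^{-2}\n{v}^2)$, so the point spectrum is confined to a compact annular region.

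For the massless case $m=0$, suppose $z\in\sigma_p(\D_{0,\V})\cap(\C\setminus\R)$; then $\zeta=z^2\in\C\setminus[0,\infty)$ and the same chain gives $1\le C\,|z|\,|z^2|^{-1/2}\n{v}=C\n{v}=\n{v}/C_0<1$, a contradiction, so there are no eigenvalues off the real axis. To upgrade this to $\sigma(\D_{0,\V})=\sigma_c(\D_{0,\V})=\R$ and $\sigma_p(\D_{0,\V})=\varnothing$ (no embedded eigenvalues, empty residual spectrum) I would appeal to the boundary / limiting-absorption form of the Birman--Schwinger principle collected in Section~\ref{sec:BS}: the resolvent estimate above extends continuously to $\zeta\in(0,\infty)$ while keeping $\n{K_z}<1$ there, which excludes eigenvalues on $\R\setminus\{0\}$, and stability of the essential spectrum together with a duality argument identifies $\sigma_c$ and rules out residual spectrum; the point $z=0$ is handled separately as in the cited works.

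The main obstacle is not the algebra, which is immediate once RA(iii) is in force, but the analytic input: having at hand, for $n\ge2$, the correct endpoint ($\gamma=n/2$) uniform resolvent estimate in the radial--angular space $L^{n,1}_\rho L^\infty_\theta$ with the $|\zeta|^{-1/2}$ gain and a clean constant, and --- in the massless case --- pushing the Birman--Schwinger operator contractively up to the boundary $\zeta\in(0,\infty)$ to exclude embedded eigenvalues and to pin down the continuous spectrum. This is exactly the point where the smallness $\n{v}<C_0$ is genuinely needed: under RA(iii) the surviving term $z\,(aR_0(\zeta)\bar b)\tensor(AB^*)$ does not vanish, so no norm-free cancellation of $K_z$ is available, in contrast with the RA(ii) (for $m=0$) and RA(iv) situations treated in Theorem~\ref{thm:RAii-iv}.
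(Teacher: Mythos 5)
Your proposal is correct and follows essentially the same route as the paper: RA(iii) reduces the Birman--Schwinger operator to $z\,AB^*\,[aR_0(z^2-m^2)\overline{b}]$, which is then estimated by the $1$-dimensional kernel bound and the endpoint ($r=n$) radial--angular Barcelo--Ruiz--Vega/Frank--Simon estimate, and Lemma\til\ref{lem:BS} yields the eigenvalue bound while the uniform smallness of $\n{K_z}$ yields the massless statement. The only cosmetic difference is in the massless endgame: the paper invokes Lemma\til\ref{lem:BS2} directly from the uniform bound $\sup_{z\in\rho(\D_0)}\n{K_z}\le C\n{v}<1$, so no boundary/limiting-absorption extension, duality argument, or separate treatment of $z=0$ is actually needed.
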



\begin{figure}[p!]
	
	\begin{subfigure}[b]{\textwidth}
		\centering
		\resizebox{\linewidth}{!}{
			\includegraphics[scale=1]{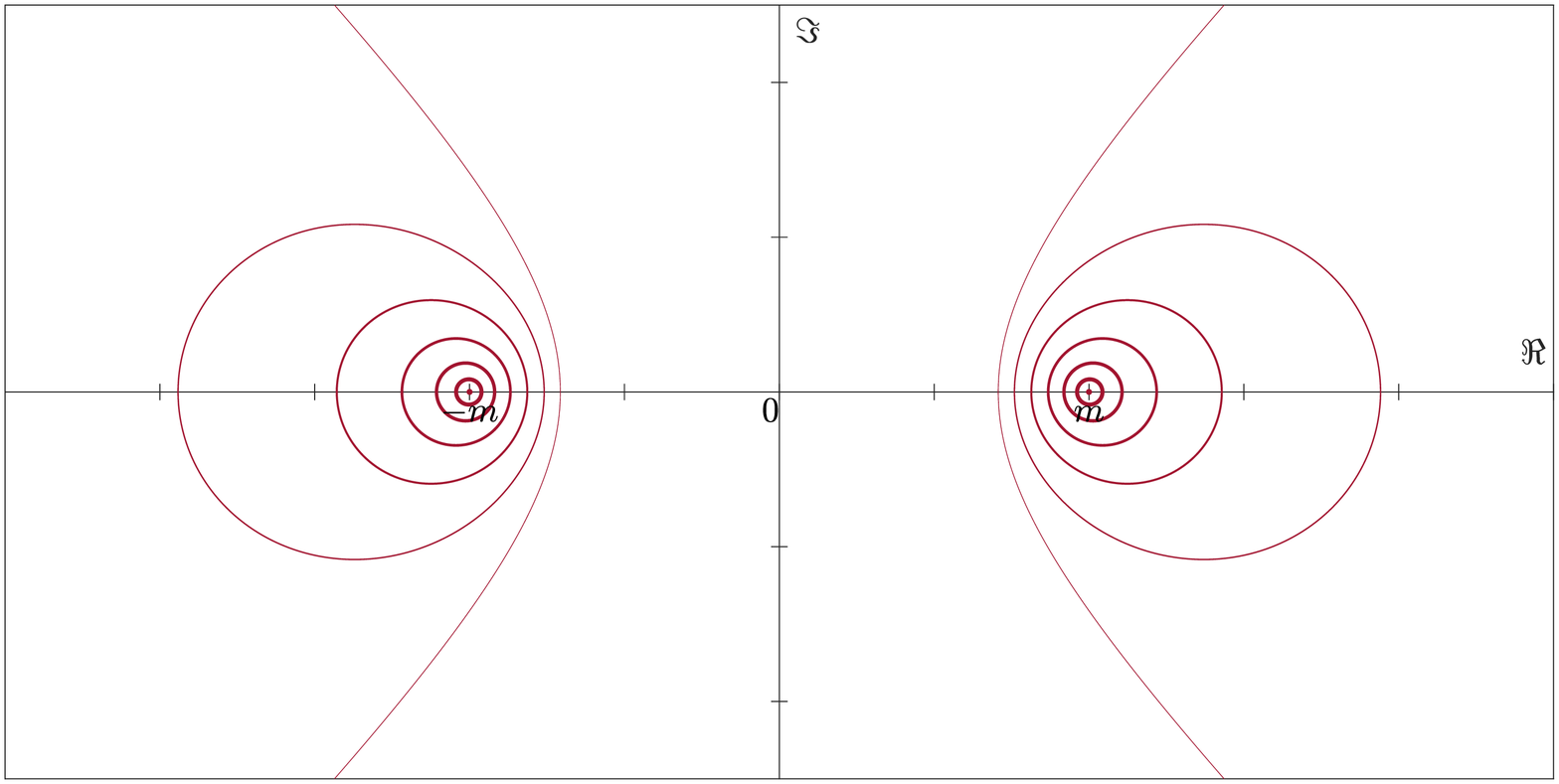}
		}
		\caption{Case of Theorem\til\ref{thm:RAiii}.}
	\end{subfigure}
	
	\begin{subfigure}[b]{\textwidth}
		\centering
		\resizebox{\linewidth}{!}{
			\includegraphics[scale=1]{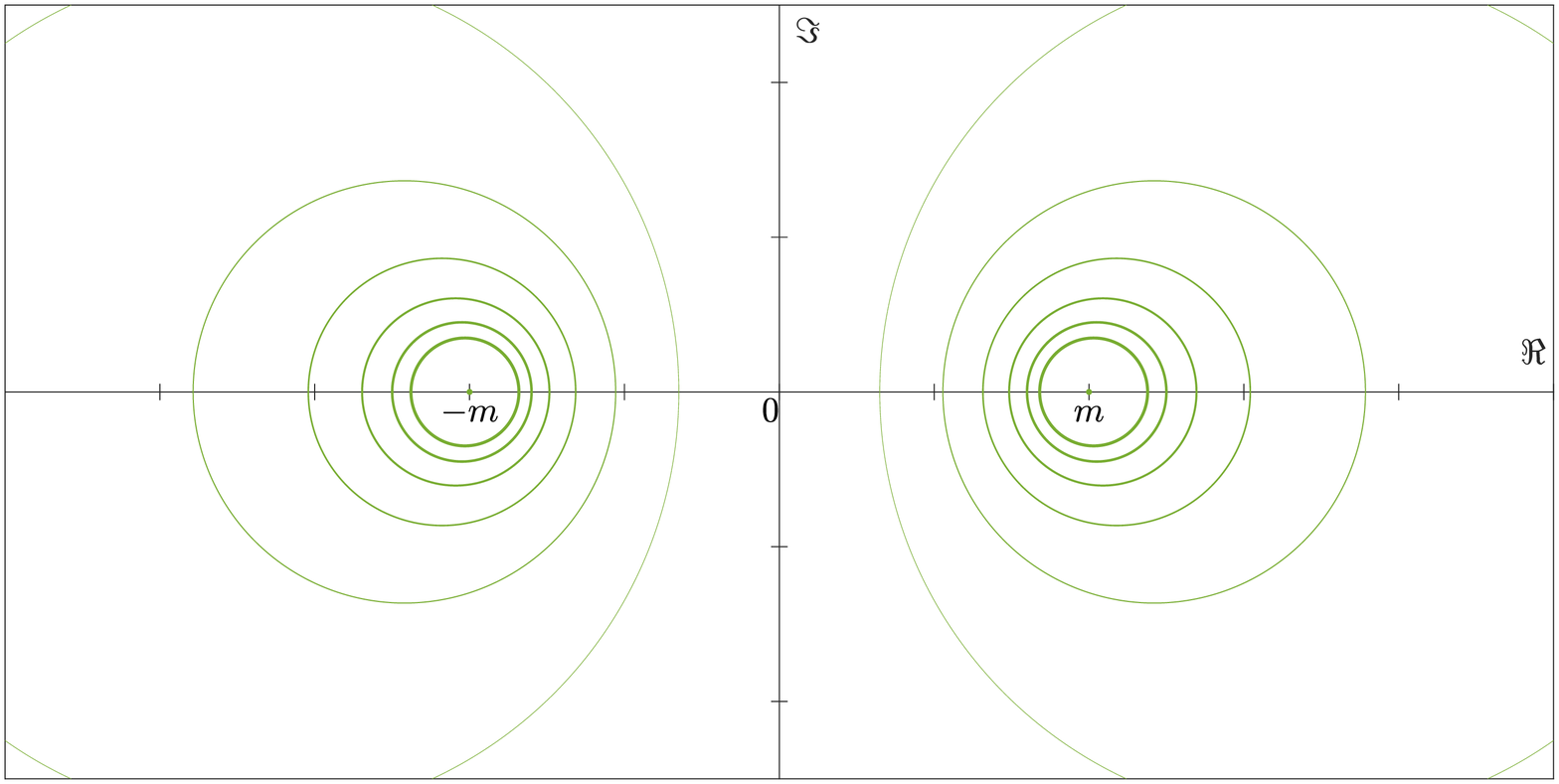}
		}
		\caption{Case of Theorems\til\ref{thm:RAi} and\til\ref{thm:RAi-general}.}
	\end{subfigure}

	\caption{\small The plots of the boundary curves corresponding to the spectral enclosures described in Theorem\til\ref{thm:RAiii} and Theorems\til\ref{thm:RAi} and\til\ref{thm:RAi-general}, for various values of the norm of the potential. The region is always the union of two disconnected components.}
	\label{fig:thm56}
\end{figure}


\begin{theorem}\label{thm:RAiii-dist}
	Let $n\in\N\setminus\{1,2,4\}$, $m\ge0$, $\V=vB^*A$ satisfying RA(iii) and $\gamma \ge n/2$. Then there exists $C_0>0$ such that 
	\begin{equation*}
		|z^2-m^2|^{1/2}
		|z|^{-\gamma-n/2}
		\dist(z^2-m^2,[0,\infty))^{\gamma-\frac{1}{2}}
		\le
		C_0^{-1}
		\n{v}_{L^{\gamma+n/2}}^{\gamma+n/2}
	\end{equation*}
	for any $z \in \sigma_p(\D_{m,\V})$.
	If $\gamma=\infty$, the above relation is substituted by
	\begin{equation*}
		|z|^{-1}
		\dist(z^2-m^2,[0,\infty))
		\le
		C_0^{-1}
		\n{v}_{L^{\infty}}.
	\end{equation*}
	If $\gamma=n/2$, we should ask also that $\n{v}_{L^{\gamma+n/2}}^{\gamma+n/2} < C_0$.
\end{theorem}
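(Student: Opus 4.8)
The plan is to reproduce, along the same lines as Theorem~\ref{thm:RAii-dist}, the Birman--Schwinger argument, with the matrix $A\alpha_{n+1}B^*$ appearing there replaced by $AB^*$. Fix $z\in\sigma_p(\D_{m,\V})$. If $z\in\sigma(\D_m)=(-\infty,-m]\cup[m,\infty)$ is an embedded eigenvalue, then $z^2-m^2\ge0$, so $\dist(z^2-m^2,[0,\infty))=0$; since $n\ge3$ forces $\gamma\ge n/2\ge3/2>1/2$, the left-hand side of the claimed estimate vanishes and there is nothing to prove. Hence we may assume $z\in\rho(\D_m)$, which by \eqref{eq:dirac-resolvent-identity} means $z^2-m^2\in\C\setminus[0,\infty)$, so that all the resolvents below are bounded operators. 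By the Birman--Schwinger principle (Lemma~\ref{lem:BS}), together with Lemma~\ref{lem:H_V} which furnishes the rigorous definition of $\D_{m,\V}$ for such $v$, $-1$ is an eigenvalue of $K_z:=\A(\D_m-z)^{-1}\B^*$, hence $\n{K_z}_{L^2(\R^n;\C^N)\to L^2(\R^n;\C^N)}\ge1$.

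The next step is to collapse $K_z$ via RA(iii). Writing $\A=aA$, $\B^*=\overline b\,B^*$ with $a=|v|^{1/2}$ and $\overline b=\sgn(v)|v|^{1/2}$, and substituting into \eqref{Kz-1}, every constant matrix commutes with, and thus factors out of, its scalar operator; the relations $A\alpha_kB^*=0$, $k=1,\dots,n$, kill all the derivative terms, $A\alpha_{n+1}B^*=0$ kills the mass term, and only the $z$-term survives, so that on $L^2(\R^n;\C^N)\cong\C^N\otimes L^2(\R^n)$
\begin{equation*}
	K_z = z\,(AB^*)\otimes\bigl(a\,R_0(z^2-m^2)\,\overline b\,\bigr).
\end{equation*}
Since the operator norm is multiplicative on tensor products and $|AB^*|\le|A|\,|B^*|=|A|\,|B|=1$ by the normalization, combining with $\n{K_z}\ge1$ yields
\begin{equation*}
	1\le|z|\,\n{a\,R_0(z^2-m^2)\,\overline b}_{L^2\to L^2}
\end{equation*}
(one could retain the factor $|AB^*|$ for a sharper constant). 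The matricial structure has now done its job, and we are reduced to a purely Schr\"odinger resolvent estimate.

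It remains to insert the resolvent bounds of Section~\ref{sec:estimates}. For $1/2<\gamma<\infty$ put $r:=\gamma+n/2$ and pick $q$ with $1/q'-1/q=1/r$; by H\"older's inequality, using $\n{a}_{L^{2r}}\n{\overline b}_{L^{2r}}=\n{v}_{L^r}$, and the Frank-type resolvent estimate underlying \eqref{eq:S-dist}, one has, with $w:=z^2-m^2$,
\begin{equation*}
	\n{a\,R_0(w)\,\overline b}_{L^2\to L^2}\le\n{R_0(w)}_{L^{q'}\to L^{q}}\,\n{v}_{L^r}\le C\,|w|^{-\frac{1}{2r}}\,\dist(w,[0,\infty))^{-\frac{\gamma-1/2}{r}}\,\n{v}_{L^r}.
\end{equation*}
Feeding this into the previous display, isolating $\n{v}_{L^r}$ and raising to the power $r=\gamma+n/2$ gives
\begin{equation*}
	|z|^{-\gamma-n/2}\,|z^2-m^2|^{1/2}\,\dist(z^2-m^2,[0,\infty))^{\gamma-1/2}\le C^{\gamma+n/2}\,\n{v}_{L^{\gamma+n/2}}^{\gamma+n/2},
\end{equation*}
which is the claim with $C_0^{-1}=C^{\gamma+n/2}$. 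For $\gamma=\infty$ one instead uses the spectral-theorem bound $\n{R_0(w)}_{L^2\to L^2}=\dist(w,[0,\infty))^{-1}$ together with $\n{a}_{L^\infty}\n{\overline b}_{L^\infty}=\n{v}_{L^\infty}$, which gives $1\le|z|\,\n{v}_{L^\infty}\,\dist(z^2-m^2,[0,\infty))^{-1}$, i.e. the stated limiting inequality.

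The points I expect to demand the most care are: first, on the analytic side, checking that the estimate imported from Section~\ref{sec:estimates} carries exactly the $|w|^{-1/(2r)}\dist(w,[0,\infty))^{-(\gamma-1/2)/r}$ dependence for all $\gamma>1/2$ --- this is precisely what makes the powers on the two sides match after raising to the power $\gamma+n/2$ --- together with the bookkeeping of the substitution $w=z^2-m^2$ and the $z\in\rho(\D_m)$ discussion; and second, the endpoint $\gamma=n/2$ (i.e. $r=n$). There the inequality is established exactly as above, but the extra hypothesis $\n{v}_{L^{\gamma+n/2}}^{\gamma+n/2}<C_0$ is imposed only to make the conclusion meaningful: as explained before the statement, for $\gamma=n/2$ the left-hand side tends to a positive constant as $z\to\infty$ along any vertical line, so the region it cuts out is compact in each strip $\{\Re z=x_0\}$ precisely when $C_0^{-1}\n{v}_{L^{\gamma+n/2}}^{\gamma+n/2}<1$, whereas for $\gamma>n/2$ the left-hand side blows up there and no smallness is needed. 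Apart from this endpoint, the argument is the same bookkeeping as in the proofs of Theorems~\ref{thm:RAii-dist} and~\ref{thm:RAiii}.
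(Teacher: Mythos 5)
Your argument is correct and is essentially the paper's own proof: RA(iii) collapses the Birman--Schwinger operator to $z\,AB^*\,[a R_0(z^2-m^2)\overline b]$ (i.e. $\varkappa(z)=|z|$), and then H\"older plus the conjugate-line resolvent bound with the $\dist$ factor (in the paper this is Lemma\til\ref{resest-T3}, used through \eqref{eq:bs-est-dist}, which is the precise source of your ``Frank-type'' estimate), raised to the power $r=\gamma+n/2$, gives the stated inequality, with the trivial $L^2\to L^2$ bound at $\gamma=\infty$ and the same reading of the $\gamma=n/2$ smallness condition as the paper. The only detail the paper adds, and you leave implicit, is the one-line verification of Assumption\til\ref{ass2'} (taking $z_0=iy_0$ with $y_0$ large, which at the endpoint $\gamma=n/2$ also relies on the smallness of $\n{v}$).
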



\begin{figure}[p!]
	
	\begin{subfigure}[b]{\textwidth}
		\centering
		\resizebox{\linewidth}{!}{
			\includegraphics[scale=1]{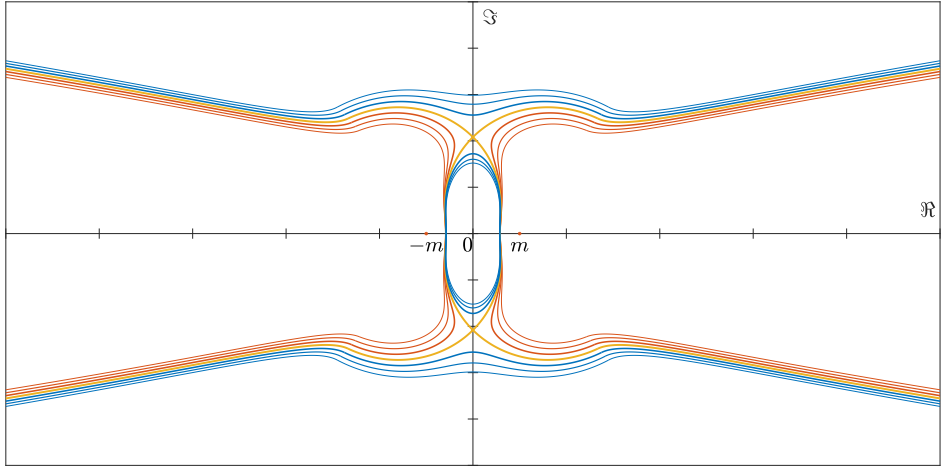}
		}
		\caption{Case of Theorem\til\ref{thm:RAiii-dist}.}
	\end{subfigure}
	
	\begin{subfigure}[b]{\textwidth}
		\centering
		\resizebox{\linewidth}{!}{
			\includegraphics[scale=1]{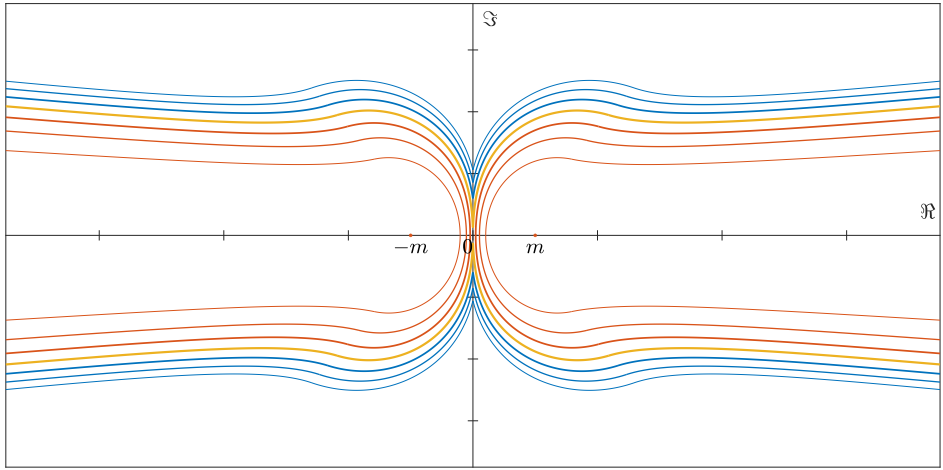}
		}
		\caption{Case of Theorem\til\ref{thm:RAi-dist}.}
	\end{subfigure}

	\caption{\small The plots of the boundary curves corresponding to the spectral enclosures described in Theorems\til\ref{thm:RAiii-dist} and\til\ref{thm:RAi-dist}, for various values of the norm of the potential and for $n/2<\gamma<\infty$. 
		\newline
		According to the value of the norm of $v$, the enclosure region can be composed: by two disconnected components (in red); by two components joining in two points in the case of Theorem\til\ref{thm:RAiii-dist}, and in the origin in the case of Theorem\til\ref{thm:RAi-dist} (in yellow); by one connected region (in blue), which presents a \lq\lq hole'' around the origin in the case of Theorem\til\ref{thm:RAiii-dist}.}
	\label{fig:thm89}
\end{figure}


\begin{theorem}\label{thm:RAi}
	Let $n\ge1$, $m\ge0$ and $\V = vB^*A$ satisfying RA(i).
	Moreover, let us set for simplicity 
	\begin{equation*}
		\n{\cdot} := 
		\begin{cases}
			\n{\cdot}_{L^1} & \text{if $n=1$,}
			\\
			\n{\cdot}_{L^{n,1}_\rho L^\infty_\theta} & \text{if $n\ge2$.}
		\end{cases}
	\end{equation*}
	%
	
	There exists a constant $C_0>0$ such that, if $m>0$ and $\n{v} < C_0$, then the point spectrum of $\D_{m,\V}$ is confined in the union of the two closed disks
	\begin{equation*}\label{eq:circles}
	\sigma_p(\D_\V) \subseteq \overline{B}_{R}(c_+) \cup \overline{B}_{R}(c_-)
	\end{equation*}
	with centers and radius given by
	\begin{equation*}
	c_\pm = \pm m
	\frac{C_0^4+\n{v}^4}{C_0^4-\n{v}^4},
	\qquad
	R=m \frac{2C_0^2\n{v}^2}{C_0^4-\n{v}^4}.
	\end{equation*}

	Instead, if $m=0$ and $\n{v} < C_0$, then
	\begin{equation*}
		\sigma(\D_{0,\V})=\sigma_c(\D_{0,\V})=\sigma(\D_{0})=\R
	\end{equation*}
	and in particular $\sigma_p(\D_{0,\V})=\varnothing$.
	
	If $n=1$, we can take $C_0=2$.
\end{theorem}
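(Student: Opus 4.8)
The plan is to run the Birman--Schwinger principle (Lemma~\ref{lem:BS}) and to use RA(i) to collapse the Birman--Schwinger operator to a single constant matrix tensored with a scalar Schr\"odinger resolvent, after which the resolvent estimates of Section~\ref{sec:estimates} do the rest. Concretely, I would first record the factorization imposed by the Rigidity Assumptions: $\V=\B^*\A$ with $\A=aA$, $\B=bB$, $a=|v|^{1/2}$, $b=\overline{\sgn v}\,|v|^{1/2}$, so that $a$ and $b$ have the same size as $|v|^{1/2}$ and Lemma~\ref{lem:H_V} makes $\D_{m,\V}$ a well-defined closed operator. Lemma~\ref{lem:BS} then says that if $z\in\sigma_p(\D_{m,\V})\cap\rho(\D_m)$ then $\n{K_z}\ge1$ for $K_z:=\A(\D_m-z)^{-1}\B^*$. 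Inserting the resolvent identity~\eqref{eq:dirac-resolvent-identity} into~\eqref{Kz-1} and using $A\alpha_k B^*=0$ for $k=1,\dots,n$, every first-order term disappears and
\begin{equation*}
	K_z=\bigl(A(m\alpha_{n+1}+z)B^*\bigr)\otimes\bigl(aR_0(z^2-m^2)\overline{b}\bigr),
\end{equation*}
a tensor product of a matrix on $\C^N$ and a scalar operator on $L^2(\R^n)$; hence $\n{K_z}=|A(m\alpha_{n+1}+z)B^*|\cdot\n{aR_0(z^2-m^2)\overline{b}}_{L^2\to L^2}$.

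Next I would bound the two factors. Since the Dirac matrix $\alpha_{n+1}$ is Hermitian with $\alpha_{n+1}^2=I_N$ (see Section~\ref{sec:potential}), the matrix $m\alpha_{n+1}+z$ is normal with spectrum $\{z-m,\,z+m\}$, so $|m\alpha_{n+1}+z|=\max\{|z-m|,|z+m|\}$, and with the normalization $|A|=|B|=1$ this gives $|A(m\alpha_{n+1}+z)B^*|\le\max\{|z-m|,|z+m|\}$. For the scalar factor I would invoke the Schr\"odinger resolvent estimates of Section~\ref{sec:estimates}: the elementary $\n{R_0(w)}_{L^1\to L^\infty}=\tfrac12|w|^{-1/2}$ together with $\n{a}_{L^2}^2=\n{b}_{L^2}^2=\n{v}_{L^1}$ when $n=1$, and Lemma~\ref{resest-T1} (the endpoint radial--angular estimate) when $n\ge2$, which in both cases yields $\n{aR_0(w)\overline{b}}_{L^2\to L^2}\le C_n|w|^{-1/2}\n{v}$ with $w=z^2-m^2$, $C_1=\tfrac12$, $\n{\cdot}=\n{\cdot}_{L^1}$ for $n=1$ and $\n{\cdot}=\n{\cdot}_{L^{n,1}_\rho L^\infty_\theta}$ for $n\ge2$. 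Putting $C_0:=C_n^{-1}$ (so $C_0=2$ for $n=1$), the bound $\n{K_z}\ge1$ becomes
\begin{equation*}
	|z^2-m^2|^{1/2}=|z-m|^{1/2}|z+m|^{1/2}\le\frac{\n{v}}{C_0}\,\max\{|z-m|,|z+m|\}.
\end{equation*}

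To finish, for $m>0$ I would turn this master inequality into the two-disk statement: dividing by the maximum and squaring, it is equivalent to $\min\{|z-m|,|z+m|\}\le\mu\,\max\{|z-m|,|z+m|\}$ with $\mu:=(\n{v}/C_0)^2<1$ (this is where $\n{v}<C_0$ enters), i.e.\ to ``$|z-m|\le\mu|z+m|$ or $|z+m|\le\mu|z-m|$''; each alternative defines an Apollonius disk, and the by-now familiar computation (cf.\ the $\nu$ in~\eqref{eq:disks-DFS}, here $\nu=\mu^{-1}=C_0^2/\n{v}^2$) identifies them with $\overline B_R(c_+)$ and $\overline B_R(c_-)$ for the $c_\pm$, $R$ in the statement, two disjoint disks. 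For $m=0$ the right-hand side of the master inequality is $(\n{v}/C_0)|z|$ while the left-hand side is $|z|$, so $\n{K_z}\ge1$ is impossible for $z\neq0$ once $\n{v}<C_0$; the same uniform estimate gives $\n{K_z}<1$ for every $z\in\rho(\D_0)=\C\setminus\R$, hence $I+K_z$ is invertible there and $\sigma(\D_{0,\V})\subseteq\R$, while stability of the essential spectrum gives the reverse inclusion, so $\sigma(\D_{0,\V})=\sigma_c(\D_{0,\V})=\R$.

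It remains to handle eigenvalues embedded in $\sigma(\D_m)$ (those with $z^2-m^2\in[0,\infty)$), which is also what completes the $m=0$ case: here I would run the standard limiting-absorption variant of the argument, using that the resolvent estimates of Section~\ref{sec:estimates} survive for the boundary values $R_0(w\pm i0)$, so the master inequality persists on the closure and no embedded eigenvalue can escape the two disks (and none exists when $m=0$). I expect the genuinely delicate part to be not the matrix algebra — which is exactly what RA(i) is designed to trivialise — but this functional-analytic bookkeeping in the low-regularity regime: justifying the factorization, Lemma~\ref{lem:H_V} and Lemma~\ref{lem:BS} when $v$ belongs merely to a radial--angular Lorentz space, and extending everything up to the continuous spectrum $\sigma(\D_m)$ so as to capture embedded eigenvalues and to pin down $\sigma(\D_{0,\V})=\R$.
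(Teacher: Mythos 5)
Your proposal is correct and follows essentially the same route as the paper: RA(i) kills the first-order terms, the matrix factor is bounded by $\max\{|z-m|,|z+m|\}$ (which is exactly the paper's $\varkappa$), the scalar factor is controlled by the $n=1$ kernel bound and by the Barcelo--Ruiz--Vega radial--angular endpoint estimate --- note that for $n\ge2$ this is Lemma\til\ref{resest-T2} (in the weighted form \eqref{eq:BRV-w}), not Lemma\til\ref{resest-T1} as you cite --- and the resulting Birman--Schwinger inequality $1\le C\kappa(z)\n{v}$ is then identified with the pair of Apollonius disks having precisely the stated $c_\pm$ and $R$, just as in Subsection\til\ref{subsec:proof}. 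For the massless case, instead of appealing loosely to ``stability of the essential spectrum'', you should simply invoke Lemma\til\ref{lem:BS2}: your uniform bound $\sup_{z\in\rho(\D_0)}\n{K_z}\le\n{v}/C_0<1$ is exactly its hypothesis, and it delivers $\sigma(\D_{0,\V})=\sigma_c(\D_{0,\V})=\R$ and $\sigma_p(\D_{0,\V})=\varnothing$ directly, which is how the paper concludes.
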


\begin{theorem}\label{thm:RAi-dist}
	Let $n\ge2$, $m \ge 0$, $\V = vB^*A$ satisfying RA(i) and $\gamma\ge n/2$. Then there exist $C_0>0$ such that
	\begin{equation*}
		|z^2-m^2|^{\frac{1}{2}\left(1-\gamma-\frac{n}{2}\right)}
		\left\lvert \frac{z+m}{z-m} \right\rvert^{-\left(\gamma+\frac{n}{2}\right)\frac{\sgn\Re z}{2}}
		\dist(z^2-m^2,[0,\infty))^{\gamma-\frac{1}{2}}
		\le
		C_0^{-1}
		\n{v}_{L^{\gamma+n/2}}^{\gamma+n/2}
	\end{equation*}
	for any $z \in \sigma_p(\D_{m,\V})$. If $\gamma=\infty$, the above relation is substituted by
	\begin{equation*}
		|z-m|^{\frac{\sgn\Re z -1}{2}}
		|z+m|^{-\frac{\sgn\Re z +1}{2}}
		\dist(z^2-m^2,[0,\infty))
		\le
		C_0^{-1}
		\n{v}_{L^{\infty}}.
	\end{equation*}
	If $\gamma=n/2$, we should ask also that $\n{v}_{L^{\gamma+n/2}}^{\gamma+n/2} < C_0$.
\end{theorem}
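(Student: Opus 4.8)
The plan is to run exactly the Birman--Schwinger scheme already used for Theorems~\ref{thm:RAii-dist} and~\ref{thm:RAiii-dist}; the one novelty of RA(i) is that \emph{both} matrices $A\alpha_{n+1}B^*$ and $AB^*$ are now nonzero, so the scalar coupling standing in front of the Schr\"odinger resolvent becomes $z$-dependent. First I would use Lemma~\ref{lem:H_V} to make sense of $\D_{m,\V}$ and Lemma~\ref{lem:BS} to conclude that if $z\in\sigma_p(\D_{m,\V})\cap\rho(\D_m)$ then $-1\in\sigma_p(K_z)$, hence $\n{K_z}_{L^2\to L^2}\ge1$, where $K_z=\A(\D_m-z)^{-1}\B^*$ and $\A=aA$, $\B=bB$, $a=|v|^{1/2}$, $b=\overline{\sgn v}\,|v|^{1/2}$. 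Inserting \eqref{eq:dirac-resolvent-identity} into \eqref{Kz-1} and using $A\alpha_kB^*=0$ for $k=1,\dots,n$, all the derivative terms $\A\alpha_k\partial_kR_0(z^2-m^2)\B^*$ vanish, and since $R_0(w)$ is scalar it commutes with constant matrices; writing $w:=z^2-m^2$ one is left with the factorization
\begin{equation*}
	K_z=\big(mA\alpha_{n+1}B^*+zAB^*\big)\otimes\big(aR_0(w)\overline{b}\big)=:M(z)\otimes\big(aR_0(w)\overline{b}\big),
\end{equation*}
a tensor product of the constant matrix $M(z)=A(m\alpha_{n+1}+z)B^*$ with the \emph{scalar} Schr\"odinger Birman--Schwinger operator at energy $w$; in particular $\n{K_z}_{L^2\to L^2}=|M(z)|\,\n{aR_0(w)\overline{b}}_{L^2\to L^2}$. (Note that $z\in\rho(\D_m)$ forces $w\notin[0,\infty)$, so $R_0(w)$ is a well-defined bounded operator.)

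The two factors are then estimated separately. For the matricial one, $\alpha_{n+1}$ is Hermitian with $\alpha_{n+1}^2=I_N$, so $m\alpha_{n+1}+z$ is normal with eigenvalues $z\pm m$; together with $|A|=|B|=1$ and submultiplicativity this gives
\begin{equation*}
	|M(z)|\le|m\alpha_{n+1}+z|=\max\{|z+m|,|z-m|\}=|z+m|^{\frac{1+\sgn\Re z}{2}}\,|z-m|^{\frac{1-\sgn\Re z}{2}},
\end{equation*}
the last equality because $|z+m|^2-|z-m|^2=4m\Re z$. For the scalar one I would invoke the resolvent estimates for $R_0$ collected in Section~\ref{sec:estimates} (the ``distance'' estimate behind \eqref{eq:S-dist}, valid for every $\gamma>1/2$ with no symmetry assumption), which after the usual H\"older splitting $a\cdot R_0(w)\cdot\overline{b}$ read, in Birman--Schwinger form,
\begin{equation*}
	\n{aR_0(w)\overline{b}}_{L^2\to L^2}\le C_{\gamma,n}\Big(|w|^{1/2}\dist(w,[0,\infty))^{\gamma-1/2}\Big)^{-\frac{1}{\gamma+n/2}}\n{v}_{L^{\gamma+n/2}}\qquad(\tfrac12<\gamma<\infty),
\end{equation*}
and $\n{aR_0(w)\overline{b}}_{L^2\to L^2}\le\dist(w,[0,\infty))^{-1}\n{v}_{L^\infty}$ when $\gamma=\infty$. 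Multiplying the last two displays, using $|w|=|z+m|\,|z-m|$, raising to the power $\gamma+n/2$ and rearranging (so that $\max\{|z+m|,|z-m|\}^{-(\gamma+n/2)}$ turns into the compact weight $|z^2-m^2|^{\frac12(1-\gamma-n/2)}\,|\tfrac{z+m}{z-m}|^{-(\gamma+n/2)\frac{\sgn\Re z}{2}}$) yields exactly the asserted inequality, with $C_0:=C_{\gamma,n}^{-(\gamma+n/2)}$ (and $C_0=1$ for $\gamma=\infty$); the hypothesis $n\ge2$ enters only through $\gamma\ge n/2>1/2$, which is where \eqref{eq:S-dist} applies, the remaining massive cases being Theorem~\ref{thm:RAi}.

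I do not expect a genuine obstacle in the estimate itself: the two nontrivial ingredients — the Birman--Schwinger principle (Lemma~\ref{lem:BS}) and the Schr\"odinger resolvent estimates (Section~\ref{sec:estimates}) — are imported, while the collapse of the derivative terms under RA(i) and the normal-matrix computation for $|M(z)|$ are elementary. The two delicate points are bookkeeping-flavoured: matching the $|z\pm m|$-weights to the compact form in the statement (done by the substitution above), and the endpoint $\gamma=n/2$. For the latter, analysing the left-hand side along a vertical line $\Re z=x_0$ as $|\Im z|\to\infty$ — where $\dist(w,[0,\infty))=|w|\sim|\Im z|^2$ and $\max\{|z+m|,|z-m|\}\sim|\Im z|$ — one finds it behaves like $|\Im z|^{\gamma-n/2}$: this tends to $+\infty$ for $\gamma>n/2$ (so the confinement set has bounded vertical sections automatically), tends to $0$ for $\gamma<n/2$ (so the set is the complement of a bounded region, whence the restriction $\gamma\ge n/2$), and tends to the constant $1$ for $\gamma=n/2$ (so bounded sections are obtained precisely when $\n{v}_{L^{\gamma+n/2}}^{\gamma+n/2}<C_0$, the extra hypothesis in that case). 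The same reading of the inequality produces the qualitative picture of Figure~\ref{fig:thm89}, and specialising $AB^*=0$ or $A\alpha_{n+1}B^*=0$ (so that $M(z)$ collapses to $mA\alpha_{n+1}B^*$ or to $zAB^*$) recovers Theorems~\ref{thm:RAii-dist} and~\ref{thm:RAiii-dist} from the very same computation.
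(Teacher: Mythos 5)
Your proposal is correct and follows essentially the same route as the paper: the Birman--Schwinger principle (Lemmas~\ref{lem:H_V} and~\ref{lem:BS}), the vanishing of the derivative terms under RA(i), the bound $|A(m\alpha_{n+1}+z)B^*|\le\max\{|z+m|,|z-m|\}=|z^2-m^2|^{1/2}\bigl\lvert\tfrac{z+m}{z-m}\bigr\rvert^{\sgn\Re z/2}$ (the paper's $\varkappa$), and the conjugate-line resolvent estimate with the distance factor (the paper's \eqref{eq:bs-est-dist}, from Lemma~\ref{resest-T3}) combined via H\"older, raised to the power $\gamma+n/2$ and rearranged, with the same reading of the endpoint $\gamma=n/2$. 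The only cosmetic differences are your tensor-product/normal-matrix phrasing of the matricial factor and the slightly loose attribution of the scalar estimate to \eqref{eq:S-dist} rather than to Lemma~\ref{resest-T3}(ii), neither of which affects the argument.
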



As we already explained, the main trick to get the theorems above basically consists of imposing all the term of the type $\A \alpha_k \partial_k R_0(z^2-m^2) \B^*$ in \eqref{Kz-1} to vanish, leaving only the last term:
\begin{equation*}
	\A(\D_m -z)^{-1} \B^* = 
	A(m\alpha_{n+1}+z) B^*
	\left[aR_0(z^2-m^2) \overline{b}\right].
\end{equation*}
This because we want to employ estimates for the resolvent of the Schr\"odinger operator but not for its derivatives. However, the work \cite{BarceloRuizVega97} furnish us some kind of such estimates for the derivatives of the Schr\"odinger resolvent (see Lemma\til\ref{resest-T2} below). Consequently, we can easily obtain the following confinement result without requiring any special structure on the potential $\V$, but only assuming its smallness respect to the $L_\rho^{n,1} L_\theta^\infty$-norm.

\begin{theorem}\label{thm:RAi-general}
	Let $n\ge2$, $m\ge0$ and $\V \colon \R^n \to \C^{N\times N}$ a generic function.
	%
	%
	There exists a constant $C_0>0$ such that, if $m>0$ and $\n{\V}_{L^{n,1}_\rho L^\infty_\theta} < C_0$, then the point spectrum of $\D_{m,\V}$ is confined in the union of the two closed disks
	\begin{equation*}\label{eq:circles}
		\sigma_p(\D_\V) \subseteq \overline{B}_{R}(c_+) \cup \overline{B}_{R}(c_-)
	\end{equation*}
	with centers and radius given by
	\begin{equation*}
		c_\pm = \pm m
		\frac{\nu^2 + 1}{\nu^2 - 1},
		\qquad
		R=m \frac{2\nu}{\nu^2 - 1},
		\qquad
		\nu := 
		\left[ \frac{2C_0}{\n{\V}_{L^{n,1}_\rho L^\infty_\theta}} - 1 \right]^2.
	\end{equation*}
	
	Instead, if $m=0$ and $\n{\V}_{L^{n,1}_\rho L^\infty_\theta} < C_0$, then
	\begin{equation*}
		\sigma(\D_{0,\V})=\sigma_c(\D_{0,\V})=\sigma(\D_{0})=\R
	\end{equation*}
	and in particular $\sigma_p(\D_{0,\V})=\varnothing$.
\end{theorem}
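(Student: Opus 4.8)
The plan is to run the Birman--Schwinger machinery of Section~\ref{sec:BS} exactly as in the proofs of Theorems~\ref{thm:RAii-n1}--\ref{thm:RAi-dist}, but \emph{without} cancelling the derivative terms in \eqref{Kz-1}, since no rigidity structure is assumed on $\V$ here. First I would write the polar decomposition $\V=\mathcal U\mathcal W$, set $\A=\sqrt{\mathcal W}$ and $\B=\sqrt{\mathcal W}\,\mathcal U^*$ (so that $\A,\B$ both sit in the same mixed Lebesgue space and $\n{\A}_X=\n{\B}_X=\n{\V}_X^{1/2}$ with $X=L^{n,1}_\rho L^\infty_\theta$), and invoke the Birman--Schwinger principle: if $z\in\sigma_p(\D_{m,\V})\cap\rho(\D_m)$ then $1\le\n{K_z}$ with $K_z=\A(\D_m-z)^{-1}\B^*$. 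Using \eqref{Kz-1} and the triangle inequality,
\begin{equation*}
	1\le \n{K_z}
	\le \sum_{k=1}^n \n{\A\alpha_k\partial_k R_0(z^2-m^2)\B^*}
	+ \n{\A(m\alpha_{n+1}+z)R_0(z^2-m^2)\B^*}.
\end{equation*}
Since $|\alpha_k|=1$ and $|m\alpha_{n+1}+z|\le m+|z|$ (or, more precisely, the operator norm $|m\alpha_{n+1}+zI_N|$, which one computes from the Clifford relations to equal the quantity controlling $|z^2-m^2|$ via $(\D_m-z)(\D_m+z)=-\Delta+m^2-z^2$), each summand is bounded by an operator norm of $\A(\cdot)\B^*$ acting between the relevant spaces, times a scalar depending only on $z,m,n$.

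The key input is the Barcel\'o--Ruiz--Vega--type estimate recalled as Lemma~\ref{resest-T2}, which should give uniform bounds of the form $\n{|x|^{?}\,\partial_k R_0(\zeta)}_{X\to X'}\lesssim$ const and $\n{R_0(\zeta)}_{X\to X'}\lesssim |\zeta|^{-1/2}$ (or the appropriate homogeneity) for the radial-angular space $X=L^{n,1}_\rho L^\infty_\theta$ and its dual. Substituting $\zeta=z^2-m^2$ and combining with H\"older in the scalar part (which turns $\n{a\,T\,\overline b}$ into $\n{v}_X$ times the $X\to X'$ norm of $T$), I expect to land on an inequality of the shape
\begin{equation*}
	1\le \left(c_1 + c_2\,\frac{m+|z|}{|z^2-m^2|^{1/2}}\right)\n{\V}_X
\end{equation*}
for dimensional constants $c_1,c_2>0$. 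Writing $2C_0:=1/\max(c_1,c_2)$ (or the precise combination that makes the algebra clean), the smallness hypothesis $\n{\V}_X<C_0$ forces the scalar prefactor to exceed $1$, which rearranges to $(m+|z|)/|z^2-m^2|^{1/2}\ge (\,C_0/\n{\V}_X - 1\,)=:\sqrt\nu$ with $\nu>1$; that is, $|z^2-m^2|\le \nu^{-1}(m+|z|)^2$ — wait, one wants the inequality with $|z+m||z-m|$, so more carefully $|z^2-m^2|=|z-m||z+m|$ and the bound becomes $|z-m||z+m|\le \nu^{-1}(|z|+m)^2$, which after the substitution $w=(z+m)/(z-m)$ (mapping the two disks to a half-plane) is a standard computation identifying the locus as the union of the two disks $\overline B_R(c_\pm)$ with the stated $c_\pm,R$; this is the identical final algebraic step used in \cite{DAnconaFanelliSchiavone21} for \eqref{eq:disks-DFS}, only with $\nu$ redefined. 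In the massless case $m=0$ the prefactor is just $(c_1+c_2)\n{\V}_X<1$, so $\n{K_z}<1$ for every $z\in\rho(\D_0)=\C\setminus\R$, hence no eigenvalues off $\R$; that $\sigma=\sigma_c=\R$ with no eigenvalues embedded in $\R$ follows from the same resolvent bound together with a limiting-absorption argument already packaged in Section~\ref{sec:BS} (as in the proof of the $m=0$ part of Theorem~\ref{thm:RAii-iv}).

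The main obstacle I anticipate is purely bookkeeping with the resolvent estimate: checking that Lemma~\ref{resest-T2} is stated with \emph{exactly} the homogeneity and the $L^{n,1}_\rho L^\infty_\theta \to L^{n',\infty}_\rho L^1_\theta$ (or whatever the correct dual exponents are) mapping properties needed so that both the zeroth-order term $R_0(z^2-m^2)$ and the first-order terms $\partial_k R_0(z^2-m^2)$ are controlled \emph{uniformly in $z$} — the derivative terms are the delicate ones, and the whole point of this theorem (versus the rigid ones) is that they survive, so one must be sure the Agmon--H\"ormander/Barcel\'o--Ruiz--Vega weighted bound genuinely absorbs them without an extra $z$-dependent factor that would spoil the compactness of the $\Re z = x_0$ sections. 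A secondary point is the precise value of the operator norm $|mI_N\alpha_{n+1}+zI_N|$ on $\C^N$: one should verify via \eqref{clifford} that $(m\alpha_{n+1}+\bar z)(m\alpha_{n+1}+z)=(|z|^2+m^2)I_N + 2m(\Re z)\alpha_{n+1}$ has spectral radius $(|z|+m)^2$ when... actually $|m^2+|z|^2+2m\Re z|$ eigenvalues are $m^2+|z|^2\pm 2m|\Re z|$, so the norm is $\sqrt{m^2+|z|^2+2m|\Re z|}$; I would carry this exact constant through rather than the crude $m+|z|$, which only changes the constant $C_0$ and not the shape of the region. Everything else — factorization, Birman--Schwinger, H\"older, the disk algebra — is routine and parallels the earlier proofs in the paper verbatim.
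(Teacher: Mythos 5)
Your proposal is correct and takes essentially the same route as the paper: the paper likewise uses the polar decomposition $\V=\mathcal{U}\mathcal{W}$ with $\A=\sqrt{\mathcal{W}}$, $\B=\sqrt{\mathcal{W}}\mathcal{U}^*$, the Birman--Schwinger principle, and the Barcel\'o--Ruiz--Vega estimates \eqref{eq:BRV-C}--\eqref{eq:BRV-C-nabla} (packaged as Corollary\til\ref{cor:radial-dirac}, i.e.\ the free Dirac resolvent bound on $L_\rho^{2n/(n+1),1}L_\theta^2 \to L_\rho^{2n/(n-1),\infty}L_\theta^2$, with the gradient term uniform in $z$), arriving via H\"older at $1\le K \n{\V}_{L^{n,1}_\rho L^\infty_\theta}\bigl[1+\left\lvert\frac{z+m}{z-m}\right\rvert^{\sgn\Re z/2}\bigr]$ and the same Apollonius-disk algebra with $C_0=\frac{1}{2K}$, plus Lemma\til\ref{lem:BS2} for the massless case. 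Do carry the exact factor $\max\{|z+m|,|z-m|\}=\sqrt{m^2+|z|^2+2m|\Re z|}$ (as you note at the end) rather than the crude $m+|z|$, since only the former rearranges to exactly the stated disks.
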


	The above theorem is a generalization of Theorem\til\ref{thm:RAi}, dropping the many restrictions on $\V$ and with slightly modified definitions of centers and radius of the disks. In some sense, it can be seen as the radial version of the result in \cite{DAnconaFanelliSchiavone21} recalled above in \eqref{eq:DFS}--\eqref{eq:disks-DFS}.

We will prove our results after recalling the key resolvent estimates for the Schr\"odinger operator and the Birman-Schwinger principle.


\section{Resolvent estimates for Schr\"odinger}\label{sec:estimates}

In this section we collect some well-known resolvent estimates for the Schr\"odinger operator.
For our purposes the estimates on the conjugate line 
are sufficient, but we think it is nice to look at the complete picture.

For dimension $n\ge3$, let us define the following endpoints
\begin{gather*}
A := \left(\frac{n+1}{2n} , \frac{n-3}{2n}\right),
\qquad
A' := \left(\frac{n+3}{2n} , \frac{n-1}{2n}\right),
\\
B := \left(\frac{n+1}{2n} , \frac{(n-1)^2}{2n(n+1)}\right) ,
\qquad
B' := \left(\frac{n^2+4n-1}{2n(n+1)} , \frac{n-1}{2n}\right),
\\
A_0 := \frac{A+A'}{2} = \left( \frac{n+2}{2n}, \frac{n-2}{2n} \right),
\qquad
B_0 := \frac{B+B'}{2} = \left( \frac{n+3}{2(n+1)} , \frac{n-1}{2(n+1)} \right) ,
\\
C := \left(\frac{n+1}{2n},\frac{n-1}{2n}\right),
\end{gather*}
and the trapezoidal region
\begin{gather*}
\begin{split}
\mathcal{T}_n 
:=&\,
\left\{
\left(\frac{1}{p},\frac{1}{q}\right) \in \mathcal{Q}
\colon
\frac{2}{n+1} \le \frac{1}{p}-\frac{1}{q} \le \frac{2}{n} , \;
\frac{1}{p} > \frac{n+1}{2n}, \;
\frac{1}{q} < \frac{n-1}{2n}
\right\}
\\
=&\,
[A,B,B',A']\setminus\{[A,B]\cup[A',B']\}
\end{split}
\end{gather*}
where $\mathcal{Q}$ is the square $[0,1]\times[0,1]$ and, for any finite set of points $\{p_1,\dots,p_k\} \subseteq \mathcal{Q}$, we denote with $[p_1,\dots,p_k]$ its convex hull.

In the $2$-dimensional case we define
\begin{gather*}
B := \left(\frac{3}{4} , \frac{1}{12}\right) ,
\quad
B' := \left(\frac{11}{12} , \frac{1}{4}\right),
\quad
B_0 := \frac{B+B'}{2} = \left(\frac{5}{6} , \frac{1}{6}\right),
\\
A_0 := (1,0),
\quad
C := \left(\frac{3}{4},\frac{1}{4}\right),
\quad
D := \left(\frac{3}{4} , 0 \right),
\quad
D' := \left(1 , \frac{1}{4}\right)
\end{gather*}
and the diamond region
\begin{gather*}
\begin{split}
\mathcal{T}_2
:=&\,
\left\{
\left(\frac{1}{p},\frac{1}{q}\right) \in \mathcal{Q}
\colon
\frac{2}{3} \le \frac{1}{p}-\frac{1}{q} <1, \;
\frac{3}{4} < \frac{1}{p} \le 1, \;
0 \le \frac{1}{q} < \frac{1}{4}
\right\}
\\
=&\,
[B,D,A_0,D',B']\setminus\{[B,D]\cup\{A_0\}\cup[B',D']\}.
\end{split}
\end{gather*}

\begin{lemma}\label{resest-T1}
	Let $z \in \C\setminus[0,\infty)$. 
	If $n=1$, then
	\begin{equation*}
	\n{(-\Delta-z)^{-1}}_{L^1 \to L^\infty} \le \frac{1}{2} |z|^{-1/2}.
	\end{equation*}
	If $n\ge2$, there exists a constant $C>0$ independent on $z$ such that:
	\begin{enumerate}[label=(\roman*)]
		\item if $(1/p,1/q) \in \mathcal{T}_n$, then
		\begin{equation}\label{KRSG}
		\n{(-\Delta-z)^{-1}}_{L^p \to L^q} \le C |z|^{-1+\frac{n}{2}\left(\frac{1}{p}-\frac{1}{q}\right)};
		\end{equation}
		\item if $(1/p,1/q) \in \{B,B'\}$ or if, when $n\ge3$, $(1/p,1/q) \in \{A,A'\}$, then the restricted weak-type estimate
		\begin{equation*}
		\n{(-\Delta-z)^{-1}}_{L^{p,1} \to L^{q,\infty}} \le C |z|^{-1+\frac{n}{2}\left(\frac{1}{p}-\frac{1}{q}\right)}
		\end{equation*}
		holds true.
	\end{enumerate}
\end{lemma}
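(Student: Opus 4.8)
The lemma collects classical resolvent bounds, so the plan is to assemble known ingredients while keeping track of the scaling exponent; the deep analytic input is quoted from \cite{KenigRuizSogge87,Frank11,FrankSimon17}. First I would dispose of $n=1$ by hand: $(-\Delta-z)^{-1}$ has the explicit integral kernel $\frac{i}{2\sqrt z}\,e^{i\sqrt z\,|x-y|}$, where $\sqrt z$ is the branch with $\Im\sqrt z>0$ (cut along $[0,\infty)$); since $\Im\sqrt z>0$ this kernel is bounded in modulus by $\frac12|z|^{-1/2}$, and the $L^1\to L^\infty$ bound follows from Young's inequality. For $n\ge2$ I would first reduce everything to the unit circle $|z|=1$ by dilation: the resolvent kernel obeys the homogeneity $G_z(x)=|z|^{(n-2)/2}G_{z/|z|}(|z|^{1/2}x)$, and coupling this with the substitution $x\mapsto|z|^{-1/2}x$ turns any bound $\n{R_0(\omega)}_{L^p\to L^q}\le C$ --- and likewise its Lorentz-space version --- valid uniformly for $|\omega|=1$ into exactly $\n{R_0(z)}_{L^p\to L^q}\le C|z|^{-1+\frac n2(1/p-1/q)}$, the claimed exponent. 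The task thus reduces to bounding $R_0(\omega)$ uniformly as $\omega$ runs over the unit circle, i.e.\ uniformly up to the limiting-absorption values $\omega=\lambda\pm i0$, $\lambda\ge0$.

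The harmonic analysis enters only at the corners of the region. On the segment $[B,B']$, which lies on the line $\frac1p-\frac1q=\frac2{n+1}$, the restricted weak-type $(p,q)$ bounds at $B$ and $B'$ reduce --- via the $TT^*$ identity and the Stone-formula representation of $R_0(\lambda+i0)-R_0(\lambda-i0)$ as a rescaled Fourier restriction-extension operator for the sphere $\{|\xi|^2=\lambda\}$ --- to the Stein-Tomas restriction theorem; this is the Kenig-Ruiz-Sogge argument. For $n\ge3$ the segment $[A,A']$ lies on $\frac1p-\frac1q=\frac2n$, and the restricted weak-type bounds at $A$ and $A'$ are the endpoint cases of the Kenig-Ruiz-Sogge uniform Sobolev inequality $\n{u}_{L^q}\le C\n{(-\Delta-z)u}_{L^p}$ (here $A$ and $A'$ carry a legitimate $L^q$-exponent only for $n\ge3$, which is why they enter only then). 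For $n=2$ the $A$-type corners degenerate, and instead I would use the two elementary endpoints $D=(3/4,0)$ and $D'=(1,1/4)$, which follow from $G_\omega\in L^4(\R^2)$ uniformly for $|\omega|=1$ (logarithmic singularity at the origin, $|x|^{-1/2}$ decay at infinity) together with Young's inequality and duality. This list of corner estimates is exactly part (ii).

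Part (i) is then pure interpolation. Applying the Stein-Weiss interpolation theorem for operators between Lorentz spaces to the restricted weak-type estimates at the four corners $A,A',B,B'$ (respectively $B,B',D,D'$ when $n=2$) upgrades them to strong-type $L^p\to L^q$ bounds at every point of the open quadrilateral, and also along the two long sides $[A,A']$ and $[B,B']$ with their endpoints removed --- there the two interpolated corners have distinct $q$-exponents, so one genuinely obtains strong type. The set of points reached this way is precisely $\mathcal T_n$ (respectively the diamond $\mathcal T_2$), which by construction omits the two short sides $[A,B]$ and $[A',B']$; and the scaling reduction pins the power of $|z|$ to $-1+\frac n2(1/p-1/q)$ uniformly over the region, yielding \eqref{KRSG}.

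The one genuine obstacle --- and it is one we import rather than confront --- is the uniformity of the corner bounds as $z$ approaches the continuous spectrum $[0,\infty)$: it rests on oscillatory-integral/stationary-phase estimates encoding the curvature of the sphere $\{|\xi|^2=\Re z\}$, precisely the point at which the Stein-Tomas theorem does the work. On our side the effort is bookkeeping: matching the scaling exponents at each corner, checking that the convex-hull descriptions of $\mathcal T_n$ and of $\mathcal T_2$ are exactly the interpolation ranges, and keeping the three dimensional regimes --- $n=1$ explicit, $n=2$ with the $L^4$-kernel endpoints, $n\ge3$ with the Kenig-Ruiz-Sogge endpoints --- carefully separated.
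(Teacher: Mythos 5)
Your $n=1$ argument and the general scaling-plus-interpolation bookkeeping are fine, and in fact the paper treats this lemma as a survey statement: the $1$-d bound via the explicit kernel and Young, the open segment $(A,A')$ and the conjugate segment $[A_0,B_0]$ quoted from Kenig--Ruiz--Sogge \cite{KenigRuizSogge87}, the full optimal range $\mathcal{T}_n$ and the endpoints $B,B'$ from Guti\'errez \cite{Gutierrez04}, the endpoints $A,A'$ from Ren--Xi--Zhang \cite{RenXiZhang18}, and the $n=2$ adaptations from Frank \cite{Frank11} and Kwon--Lee \cite{KwonLee20}. The genuine gap in your proposal lies exactly where the content of part (ii) sits, i.e.\ in your claimed derivations of the corner estimates that feed your interpolation. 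First, the restricted weak-type bounds at $A,A'$ (for $n\ge3$) are \emph{not} ``endpoint cases of the Kenig--Ruiz--Sogge uniform Sobolev inequality'': \cite{KenigRuizSogge87} proves the estimate only on the open segment $(A,A')$, and the endpoint result is a substantially harder, recent theorem of Ren--Xi--Zhang \cite{RenXiZhang18}; it is not contained in any of the sources you say you import from. Second, at $B$ and $B'$ the exponents are not conjugate ($1/p+1/q\neq1$ there), so the $TT^*$/Stone-formula reduction to Stein--Tomas --- which is precisely the KRS argument for the conjugate segment, i.e.\ for $B_0$ --- does not reach these points; the restricted weak-type bounds at $B,B'$ are due to Guti\'errez and need more than the Stein--Tomas theorem. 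As written, part (ii), and hence the corner input for your part (i), is unsupported.

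There are also two concrete problems in your $n=2$ treatment. The kernel $G_\omega$ is \emph{not} in $L^4(\R^2)$: the $|x|^{-1/2}$ decay only gives $G_\omega\in L^{4,\infty}(\R^2)$, so your $D,D'$ bounds come out restricted weak-type at best (this is fixable via the weak Young inequality). More importantly, $\mathcal{T}_2$ is the pentagon $[B,D,A_0,D',B']$ with certain boundary pieces removed, and the convex hull of your four corners $B,B',D,D'$ omits the triangle $[D,A_0,D']$, i.e.\ the exponents with $3/4<1/p-1/q<1$, which do belong to $\mathcal{T}_2$; these points are recovered not by interpolation of your corners but directly by Young's inequality using $G_\omega\in L^r(\R^2)$ for $4<r<\infty$. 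Once the endpoint estimates are simply quoted from \cite{Gutierrez04} and \cite{RenXiZhang18} (as the paper does) rather than re-derived as you propose, your scaling reduction and real-interpolation assembly of the interior and of the open long sides is correct and matches the structure of the results the paper cites.
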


The $1$-dimensional estimate immediately follows from the explicit representation for the kernel of the Laplacian resolvent, i.e. 
$$(-\Delta-z)^{-1} u(x) = \int_{-\infty}^{+\infty} \frac{i}{2\sqrt{z}} e^{i\sqrt{z}|x-y|} u(y) dy,$$
and from the Young's inequality. This estimate was firstly applied to obtain an eigenvalues localization for the Schr\"odinger operator by Abramov, Aslanyan and Davies \cite{AbramovAslanyanDavies01}.

The estimate in Lemma\til\ref{resest-T1}.(i) has been proved true on the open segment $(A,A')$ and on the conjugate segment $[A_0,B_0]$ in Lemma\til2.2.(b) and Theorem\til2.3 of the celebrated paper \cite{KenigRuizSogge87} by Kenig, Ruiz and Sogge. 
From here comes out the adjective \lq\lq uniform'' with which these kind of estimates are known (even if the multiplicative factor in general shows a dependence on $z$): the main result in \cite{KenigRuizSogge87} concerns the exponents on the segment $(A,A')$, on which the exponent in the factor $|z|^{-1+(1/p-1/q)n/2}$ is indeed equal to zero. Nowadays, the term \lq\lq uniform'' is generally used when the multiplicative factor is bounded for large value of $|z|$, which is relevant if we want to localize the eigenvalues in compact sets.

The estimate \eqref{KRSG} was then proved true on the optimal range $(1/p,1/q) \in \mathcal{T}_n$ by Guti\'errez in Theorem\til6 of \cite{Gutierrez04}. In this work the author proved also the inequality at Lemma\til\ref{resest-T1}.(ii) on the endpoints $B$ and $B'$, whereas the proof for the endpoints $A$ and $A'$ was recently given by Ren, Xi and Zhang in \cite{RenXiZhang18}.

It should be noted that both the works \cite{KenigRuizSogge87} and \cite{Gutierrez04} assume $n\ge3$. The $2$-dimensional case seems to have been gone quietly in the literature, nevertheless the arguments in the aforementioned papers can be quite smoothly extended in dimension $n=2$. This has been observed firstly in Frank \cite{Frank11} concerning the Kenig, Ruiz and Sogge's result, and by Kwon and Lee \cite{KwonLee20} about the work by Guti\'errez. 

Now, one question arises naturally: does estimates similar to \eqref{KRSG} hold outside the region $\mathcal{T}_n$? Well yes, but actually no. 
The range of exponents stated in the above theorem is optimal: estimates \eqref{KRSG} does not hold true if $(1/p,1/q)$ lies outside $\mathcal{T}_n$. For $n\ge3$, the constrains $\frac{1}{p} > \frac{n+1}{2n}$ and $\frac{1}{q}< \frac{n-1}{2n}$ are due to considerations from the theory of the Bochner-Riesz operators of negative orders, the condition $\frac{1}{p}-\frac{1}{q} \ge \frac{2}{n+1}$ comes from the Knapp counterexample and finally $\frac{1}{p}-\frac{1}{q} \le \frac{2}{n}$ follows by an argument involving the Littlewood-Paley projection. For details on this discussion we refer to\til\cite{KwonLee20} (and to\til\cite{KenigRuizSogge87}).

Nonetheless, we can still extend the region of the estimates if we sacrifice something. This is the main theme of the paper \cite{KwonLee20} by Kwon and Lee, where they conjecture that, for $n\ge2$ and $z\in\C\setminus[0,\infty)$, the relation
\begin{equation}\label{KLapprox}
\n{(-\Delta-z)^{-1}}_{L^p \to L^q} \approx |z|^{-1+\frac{d}{2}\left(\frac{1}{p}-\frac{1}{q}\right)} \left(\frac{|z|}{\dist(z,[0,\infty))}\right)^{\gamma(n,p,q)} 
\end{equation}
with
\begin{equation}\label{gamma(n,p,q)}
\gamma(n,p,q) :=
\max
\left\{
0,\,
1-\frac{n+1}{2}\left(\frac{1}{p}-\frac{1}{q}\right),\,
\frac{n+1}{2}-\frac{n}{p},\,
\frac{n}{q}-\frac{n-1}{2}
\right\}
\end{equation}
should hold on the \lq\lq stripe''
\begin{gather}\label{regionS}
\mathcal{S}
:=
\left\{
\left( \frac{1}{p} , \frac{1}{q} \right) \in \mathcal{Q} 
\colon
0 \le \frac{1}{p}-\frac{1}{q} \le \frac{2}{n}
\right\}
\setminus
\mathcal{S}_0
\end{gather}
where 
\begin{gather*}
\mathcal{S}_0 :=
\left\{
\begin{aligned}
&[A,B] \cup [A',B']
\cup
\left[E,E_0 \right)
\cup
\left(E_0,E' \right]
\cup
\left\{F\right\}
\cup
\left\{F'\right\}
&&\text{if $n\ge3$,}
\\
&[B,D]
\cup[B',D']
\cup
\left[E,E_0 \right)
\cup
\left(E_0,E' \right]
\cup
\left\{A_0\right\}
&&\text{if $n=2$,}
\end{aligned}
\right.
\\[3pt]
E := \left( \frac{n-1}{2n},\frac{n-1}{2n} \right),
\quad
E' := \left( \frac{n+1}{2n},\frac{n+1}{2n} \right),
\quad
E_0 := \left( \frac{1}{2},\frac{1}{2} \right),
\\
F := \left(\frac{2}{n},0\right),
\quad
F' := \left(1,\frac{n-2}{n}\right).
\end{gather*}
The symbol $A \approx B$ in \eqref{KLapprox} means that there exists an absolute constant, independent on $z$, such that $ C^{-1} B \le A \le C B$.

Observe that the region $\mathcal{S}$ contains in particular $\mathcal{T}_n$, on which $\gamma(n,p,q)=0$ as one can naturally expect in light of the Kenig-Ruiz-Sogge-Guti\'errez inequalities.
In their work, Kwon and Lee prove their conjecture to be indeed true, making exception of the upper bound 
implicitly contained in \eqref{KLapprox}
on the region 
\begin{equation}\label{regionR}
	\widetilde{\mathcal{R}} := 
	\begin{cases}
	\varnothing &\text{if $n=2$,}
	\\
	\mathcal{R} \cup \mathcal{R}' &\text{if $n\ge3$,}
	\end{cases}
\end{equation}
where 
\begin{equation*}
	\mathcal{R} := [P_*,P_\circ,E_0]\setminus\{E_0\},
	\quad
	\mathcal{R}' := [P'_*,P'_\circ,E_0]\setminus\{E_0\},
\end{equation*}
and the endpoints are defined by
\begin{gather*}
P_* := \left(\frac{1}{p_*},\frac{1}{p_*}\right),
\qquad
P'_* := \left(1-\frac{1}{p_*},1-\frac{1}{p_*}\right),
\qquad
\frac{1}{p_*}
:=
\left\{
\begin{aligned}
& \frac{3(n-1)}{2(3n+1)} &&\text{if $n$ is odd,}
\\
& \frac{3n-2}{2(3n+2)} &&\text{if $n$ is even,}
\end{aligned}
\right.
\\
P_\circ := \left(\frac{1}{p_\circ},\frac{1}{q_\circ}\right),
\qquad
P'_\circ := \left(1-\frac{1}{q_\circ},1-\frac{1}{p_\circ}\right),
\\
\frac{1}{p_\circ}
:=
\left\{
\begin{aligned}
& \frac{(n+5)(n-1)}{2(n^2+4n-1)} &&\text{if $n$ is odd,}
\\
& \frac{n^2+3n-6}{2(n^2+3n-2)} &&\text{if $n$ is even,}
\end{aligned}
\right.
\qquad
\frac{1}{q_\circ}
:=
\left\{
\begin{aligned}
& \frac{(n+3)(n-1)}{2(n^2+4n-1)} &&\text{if $n$ is odd,}
\\
& \frac{(n-1)(n+2)}{2(n^2+3n-2)} &&\text{if $n$ is even.}
\end{aligned}
\right.
\end{gather*}

Let us gather the above results by Kwon and Lee \cite{KwonLee20} in the following

\begin{lemma}\label{resest-T3}
	Let $n\ge2$ and $z \in \C\setminus[0,\infty)$. There exists a constant $K>0$ independent on $z$ such that:
	\begin{enumerate}[label=(\roman*)]
		\item if $(1/p,1/q) \in \mathcal{S}$, then
		\begin{equation*}
		\n{(-\Delta-z)^{-1}}_{L^p \to L^q} \ge K^{-1} |z|^{-1+\frac{n}{2}\left(\frac{1}{p}-\frac{1}{q}\right)} \left(\frac{|z|}{\dist(z,[0,\infty))}\right)^{\gamma(n,p,q)};
		\end{equation*}
		\item if $(1/p,1/q) \in \mathcal{S}\setminus\widetilde{\mathcal{R}}$, then
		\begin{equation*}
		\n{(-\Delta-z)^{-1}}_{L^p \to L^q} \le K |z|^{-1+\frac{n}{2}\left(\frac{1}{p}-\frac{1}{q}\right)} \left(\frac{|z|}{\dist(z,[0,\infty))}\right)^{\gamma(n,p,q)}.
		\end{equation*}
	\end{enumerate}
	The regions $\mathcal{S}$ and $\widetilde{\mathcal{R}}$ are described in \eqref{regionS} and \eqref{regionR} respectively, while $\gamma(n,p,q)$ is defined in \eqref{gamma(n,p,q)}.
\end{lemma}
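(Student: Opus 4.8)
The statement is a convenient repackaging of results of Kwon and Lee \cite{KwonLee20}: inequality (i) is the lower bound valid on the whole strip $\mathcal{S}$, and inequality (ii) is the matching upper bound away from the exceptional triangles $\widetilde{\mathcal{R}}$. The plan is therefore to reduce the claim to the precise theorems of \cite{KwonLee20} after recording the dictionary between their normalisation and ours, and to recall the mechanism behind each half so that the exponents $\gamma(n,p,q)$ and the shape of $\mathcal{S}$, $\widetilde{\mathcal{R}}$ are transparent.

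For the lower bound (i) I would exhibit, for every $(1/p,1/q)\in\mathcal{S}$, test functions that saturate the claimed size. After the parabolic rescaling $\xi\mapsto|z|^{1/2}\xi$ one may assume $|z|=1$, and the only remaining parameter is $\delta:=\dist(z,[0,\infty))$, which governs the thickness of the shell where the symbol $(|\xi|^2-z)^{-1}$ is of size $\delta^{-1}$. Since $\gamma(n,p,q)$ is a maximum of four terms, each is matched by a different extremiser: the value $0$ is trivially attained; a radial bump supported on the $\delta$-neighbourhood of the unit sphere produces the term $1-\frac{n+1}{2}(\frac1p-\frac1q)$; a Knapp wave packet adapted to a $\delta^{1/2}\times\dots\times\delta^{1/2}\times\delta$ cap of that sphere produces (jointly with the first) exactly that term; and the two Bochner--Riesz-of-negative-order examples — a single bump near a point of the sphere and its dual — force the breakdown of the constraints $\frac1p>\frac{n+1}{2n}$ and $\frac1q<\frac{n-1}{2n}$ into the exponents $\frac{n+1}{2}-\frac np$ and $\frac nq-\frac{n-1}{2}$. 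Taking the largest of these yields $\n{(-\Delta-z)^{-1}}_{L^p\to L^q}\ge K^{-1}|z|^{-1+\frac n2(1/p-1/q)}(|z|/\delta)^{\gamma(n,p,q)}$.

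For the upper bound (ii) the starting point is the sharp \emph{uniform} (i.e.\ $\gamma=0$) resolvent estimate on the trapezoid/diamond $\mathcal{T}_n$ recorded in Lemma \ref{resest-T1}, including the restricted weak-type endpoints at $B,B'$ (and at $A,A'$ when $n\ge3$). I would decompose the resolvent multiplier dyadically in the distance to the characteristic sphere, writing $(-\Delta-z)^{-1}=\sum_{j\ge0}m_j(D)+\text{(main piece near the sphere)}$ with $m_j$ supported where $|\,|\xi|^2-\Re z\,|\sim 2^j\delta$. On each far shell a rescaling reduces the bound to the uniform estimate on $\mathcal{T}_n$; summing the resulting geometric series in $j$ reproduces exactly the amplification factor $(|z|/\delta)^{\gamma(n,p,q)}$, while the borderline lines through $B,B'$ (and $A,A'$) are handled by real interpolation off the restricted weak-type endpoints. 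The delicate part is the piece genuinely concentrated on the $\delta$-neighbourhood of the sphere: here one needs the refined adjoint-restriction / Stein--Tomas estimates of \cite{KwonLee20} obtained via an $\varepsilon$-removal argument, together with the known bounds for Bochner--Riesz operators of negative order; these are available precisely on $\mathcal{S}$ minus the two small triangles $\mathcal{R},\mathcal{R}'$ near the diagonal, i.e.\ on $\mathcal{S}\setminus\widetilde{\mathcal{R}}$, which is the exact hypothesis of (ii).

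The main obstacle is entirely contained in this last step: proving the upper bound with the sharp exponent $\gamma(n,p,q)$ on the full region $\mathcal{S}\setminus\widetilde{\mathcal{R}}$ is the technical heart of \cite{KwonLee20}, and I would not reproduce it — I would invoke it. The lower bound, and the reduction by dyadic decomposition for the far shells, are by contrast routine. The two-dimensional case, for which $\widetilde{\mathcal{R}}=\varnothing$, runs through the same scheme, the relevant endpoint estimates having been extended to $n=2$ by Frank \cite{Frank11} and Kwon and Lee \cite{KwonLee20}.
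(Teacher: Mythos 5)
Your proposal is correct and ultimately does exactly what the paper does: Lemma~\ref{resest-T3} is stated in the paper purely as a collection of the results of Kwon and Lee \cite{KwonLee20}, with no independent proof given, and your argument likewise reduces both bounds to their theorems (the extra sketch of the extremisers for (i) and the dyadic decomposition for (ii) is a faithful account of the mechanism in \cite{KwonLee20}, but the technical core is invoked, not reproved, in both cases). Nothing is missing relative to the paper's treatment.
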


The analysis of Kwon and Lee pictures quite clearly the situation outside the so-called \lq\lq uniform boundedness range'' $\mathcal{T}_n$: we can still have $L^p-L^q$ inequalities so long as the factor depending on $z$ explodes when $\Im z\to0^\pm$, and this can not be improved. 
If we want to apply these estimates in the eigenvalues localization problem, this means that we can not obtain the eigenvalues confined in a compactly supported region of the complex plane, but in a set containing the continuous spectrum of the unperturbed operator.

In this optic, one can instead try to save the uniformity of the estimates, in the sense that the factor depending on $z$ should be uniformly bounded for $|z|$ sufficiently large. In this way, we can again hope to get the eigenvalues confined inside compact regions. This can be indeed obtained on a smaller region respect to $\mathcal{S}$ if we restrict ourself on considering radial functions.

Define, for $n\ge2$, the open triangle
\begin{equation*}
\begin{split}
\mathcal{P}
:=&\,
\left\{
\left(\frac{1}{p},\frac{1}{q}\right) \in \mathcal{Q}
\colon
\frac{1}{n} <
\frac{1}{p}-\frac{1}{q} < \frac{2}{n+1} , \;
\frac{1}{p} > \frac{n+1}{2n}, \;
\frac{1}{q} < \frac{n-1}{2n}
\right\}
\\
=&\,
[B,C,B']\setminus\{[B,B']\cup[B,C]\cup[C,B']\}.
\end{split}
\end{equation*}
%
%
%
Recall the radial-angular spaces \eqref{eq:radial-angular-spaces} defined in the Introduction and their norms. Adopting the terminology and notations of \cite{BarceloRuizVega97} and \cite{FrankSimon17}, we introduce also the radial Mizohata-Takeuchi norm 
\begin{equation*}
\n{w}_{\MT}
:=
\sup_{R>0} \int_{R}^{\infty}
\frac{r}{\sqrt{r^2-R^2}} 
\n{w(r\,\cdot)}_{L^\infty(\S^{n-1})}
dr
\end{equation*}
and we say that $w \in \MT$ if $\n{w}_{\MT} <\infty$. 

\begin{lemma}\label{resest-T2}
	Let $n\ge2$ and $z \in \C\setminus[0,\infty)$. There exists a constant $K>0$ independent on $z$ such that:
	\begin{enumerate}[label=(\roman*)]
		\item if $(1/p,1/q) \in (C,B_0)$, then
		\begin{equation*}
		\n{(-\Delta-z)^{-1}}_{L_\rho^{p}L_\theta^2 \to L_\rho^{q}L_\theta^2} \le K |z|^{-1-\frac{n}{2}+\frac{n}{p}};
		\end{equation*}
		\item if $(1/p,1/q)=C$, then
		\begin{align}
		\label{eq:BRV-C}
		\n{(-\Delta-z)^{-1}}_{L_\rho^{2n/(n+1),1}L_\theta^2 \to L_\rho^{2n/(n-1),\infty}L_\theta^2} &\le K |z|^{-1/2},
		\\
		\label{eq:BRV-C-nabla}
		\n{\nabla(-\Delta-z)^{-1}}_{L_\rho^{2n/(n+1),1}L_\theta^2 \to L_\rho^{2n/(n-1),\infty}L_\theta^2} &\le K 
		.
		\end{align}
	\end{enumerate}
	If in particular $u \in L^p(\R^n)$ is a radial function, then
	\begin{equation*}
	\n{(-\Delta-z)^{-1} u}_{L^q} \le K |z|^{-1+\frac{n}{2}\left(\frac{1}{p}-\frac{1}{q}\right)}
	\n{u}_{L^p}
	\end{equation*}
	for any $(1/p,1/q)\in\mathcal{P}$, and
	\begin{align*}
	\n{(-\Delta-z)^{-1} u}_{L^{2n/(n-1),\infty}} &\le K |z|^{-1/2} \n{u}_{L^{2n/(n+1),1}}
	\\
	\n{\nabla(-\Delta-z)^{-1} u}_{L^{2n/(n-1),\infty}} &\le K \n{u}_{L^{2n/(n+1),1}}
	\end{align*}
	in the case $(1/p,1/q)=C$.
\end{lemma}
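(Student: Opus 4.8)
The statement is, in essence, a repackaging of the weighted Helmholtz estimates of Barceló, Ruiz and Vega \cite{BarceloRuizVega97} in the radial--angular language later used by Frank and Simon \cite{FrankSimon17}, supplemented by the companion bound for the gradient. Accordingly, the plan is to (a) remove the spectral parameter by scaling, reducing to a normalized frequency; (b) diagonalize $(-\Delta-z)^{-1}$ via spherical harmonics; (c) invoke the uniform-in-order one-dimensional estimates for the resulting Bessel kernels; and (d) sum over the harmonic degree and, finally, interpolate and specialize to radial data.

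First I would scale. Writing $z=|z|\omega$ with $|\omega|=1$, $\omega\neq1$, and $T_\lambda f(x)=f(\lambda x)$, one has $(-\Delta-z)^{-1}=|z|^{-1}T_{|z|^{1/2}}(-\Delta-\omega)^{-1}T_{|z|^{-1/2}}$ and $\nabla(-\Delta-z)^{-1}=|z|^{-1/2}T_{|z|^{1/2}}\big[\nabla(-\Delta-\omega)^{-1}\big]T_{|z|^{-1/2}}$; together with the homogeneity $\n{T_\lambda f}_{L_\rho^{p}L_\theta^{2}}=\lambda^{-n/p}\n{f}_{L_\rho^{p}L_\theta^{2}}$, this shows that the $|z|$-powers in (i) and (ii) are exactly the scaling exponents, namely $|z|^{-1+\frac n2(1/p-1/q)}=|z|^{-1-\frac n2+\frac np}$ on the conjugate line $1/p+1/q=1$ (the segment $(C,B_0)$ and the point $C$ both lie on it), the extra gain of $|z|^{1/2}$ for $\nabla$ being responsible for the bound $\le K$ at $C$. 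Hence it suffices to bound $(-\Delta-\omega)^{-1}$ and $\nabla(-\Delta-\omega)^{-1}$ on the stated spaces uniformly in $\omega$ on the unit circle with $\omega\neq1$; for $\omega$ away from $1$ this is the trivial elliptic bound, and as $\omega\to1$ one has $\dist(\omega,[0,\infty))=|\Im\omega|\to0$, which is precisely the limiting-absorption regime controlled in \cite{BarceloRuizVega97} (the two-dimensional case being covered by the same arguments, as for Lemma~\ref{resest-T1}).

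Next I would use rotational invariance: $(-\Delta-\omega)^{-1}$ commutes with $O(n)$, so under $L^2(\S^{n-1})=\bigoplus_{k\ge0}\mathcal{H}_k$ it acts on the degree-$k$ sector as a scalar one-dimensional integral operator $R_{k,\omega}$ whose kernel is built from the Bessel functions $J_{\nu_k}$ and $H^{(1)}_{\nu_k}$ of order $\nu_k=k+\frac{n-2}{2}$ (and from their derivatives in the case of $\nabla$). The heart of the matter — and the step I expect to be the genuine obstacle — is the estimate $\n{R_{k,\omega}}_{L^p(r^{n-1}dr)\to L^q(r^{n-1}dr)}\le K$, uniformly in $k$ and in $\omega$, together with its analogue for the derivative kernel; this rests on pointwise and integrated bounds for $J_\nu,Y_\nu$ and their derivatives that are uniform in the order $\nu$, which is the technical core of \cite{BarceloRuizVega97}. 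Summing over $k$ by Plancherel on the sphere (and Minkowski's inequality in the radial variable, using $q\ge2\ge p$) upgrades the sectorwise bounds to the claimed $L_\rho^{p}L_\theta^{2}\to L_\rho^{q}L_\theta^{2}$ estimates on the open segment $(C,B_0)$; at the vertex $C$ only the restricted weak-type endpoint $L_\rho^{2n/(n+1),1}L_\theta^{2}\to L_\rho^{2n/(n-1),\infty}L_\theta^{2}$ survives (the Stein--Tomas/Knapp endpoint), as in (ii), and the gradient bound there follows by differentiating the kernel and reading off the extra half power of $|z|$.

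Finally, for the radial corollaries: if $u\in L^p(\R^n)$ is radial then $\n{u}_{L_\rho^{p}L_\theta^{2}}=|\S^{n-1}|^{1/2-1/p}\n{u}_{L^p}$, and $(-\Delta-z)^{-1}u$ is again radial, so its $L^q(\R^n)$ and $L_\rho^{q}L_\theta^{2}$ norms agree up to the factor $|\S^{n-1}|^{1/q-1/2}$; inserting this into (i) and (ii) gives the stated $L^p\to L^q$ bounds on $(C,B_0)$, at $C$, and for $\nabla$. To cover the full open triangle $\mathcal{P}=[B,C,B']$ I would restrict every operator to the closed subspace of radial functions and apply real (Marcinkiewicz) interpolation among the restricted weak-type estimates at the three vertices: at $B$ and $B'$ these hold for all functions by Lemma~\ref{resest-T1}(ii), and at $C$ for radial functions by part (ii). The gradient estimate at $C$ is what feeds into Theorem~\ref{thm:RAi-general}.
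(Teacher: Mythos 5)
Your architecture (scaling, spherical harmonics, uniform-in-order Bessel bounds, Minkowski summation for $p\le2\le q$, then specialization to radial data and real interpolation over the triangle) is indeed the machinery underlying the quoted literature, and several of your steps are sound: the scaling bookkeeping, the Minkowski argument for strong-type bounds on the conjugate segment, and the reduction of radial-angular norms to Lebesgue/Lorentz norms for radial data, with interpolation filling the triangle $\mathcal{P}$ (interpolating from the three vertices works just as well as the paper's choice of interpolating between the segments $(C,B_0)$ and $(B,B')$). However, as written there are two genuine gaps, both located exactly where you defer to \cite{BarceloRuizVega97}. First, at the endpoint $C$ the claimed bound is a Lorentz-valued estimate $L_\rho^{2n/(n+1),1}L_\theta^2\to L_\rho^{2n/(n-1),\infty}L_\theta^2$, and your summation device (Plancherel plus Minkowski, which needs $p\le2\le q$ in the strong-type scale) does not produce it from uniform sectorwise bounds; moreover the results actually stated in \cite{BarceloRuizVega97} are not sectorwise $L^p\to L^q$ bounds uniform in the order, but weighted $L^2$ (Mizohata--Takeuchi) resolvent estimates. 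The paper closes this gap differently: it shows by H\"older and duality that \eqref{eq:BRV-C} is \emph{equivalent} to the two-weight bound $\n{w_1^{1/2}(-\Delta-z)^{-1}w_2^{1/2}}_{L^2\to L^2}\le K|z|^{-1/2}\n{w_1}_{L^{n,1}_\rho L^\infty_\theta}^{1/2}\n{w_2}_{L^{n,1}_\rho L^\infty_\theta}^{1/2}$, and then imports the $\MT$-weighted estimate of Barcel\'o--Ruiz--Vega together with the embedding $L^{n,1}_\rho L^\infty_\theta\hookrightarrow\MT$ from \cite{FrankSimon17}; part (i) is simply quoted as Theorem 4.3 of \cite{FrankSimon17}.

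Second, the gradient estimate \eqref{eq:BRV-C-nabla} does not \lq\lq follow by differentiating the kernel and reading off the extra half power of $|z|$'': the angular component of $\nabla$ acting on $f_k(r)Y_k$ produces $r^{-1}\nabla_\theta Y_k$, whose size grows like the harmonic degree, so the sectorwise estimates you would need are genuinely different from (and harder than) those for the resolvent itself — this is why it is a separate theorem (Theorem 2) in \cite{BarceloRuizVega97}. In addition, that theorem is stated there only for $z\ge0$, while here one needs it uniformly for complex $z$ approaching $[0,\infty)$; the paper has to argue explicitly that the proof extends (the real-$z$ computation goes through verbatim and the Phragm\'en--Lindel\"of step of \cite{BarceloRuizVega97} adapts), after which the same duality argument as for \eqref{eq:BRV-C} yields \eqref{eq:BRV-C-nabla}. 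In your scaling reduction this complex-$z$ uniformity is precisely the limit $\omega\to1$ on the unit circle, so it cannot be waved away; as it stands, the proposal attributes to the literature an estimate it does not state, and this is the step that actually needs an argument.
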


The result in Lemma\til\ref{resest-T2}.(i) is stated in Theorem\til4.3 by Frank and Simon \cite{FrankSimon17}. Instead, the case of the endpoint $C$ is essentially due to Theorem\til1.(b) and Theorem\til2 by Barcelo, Ruiz and Vega \cite{BarceloRuizVega97}. 
Indeed, let us consider firstly the estimate for $(-\Delta-z)^{-1}$. Observe that, by H\"older's inequality and by duality, the estimate \eqref{eq:BRV-C} is equivalent to 
\begin{equation}\label{eq:BRV-w}
\n{w_1^{1/2} (-\Delta-z)^{-1} w_2^{1/2} u}_{L^2} \le K |z|^{-1/2} 
\n{w_1}_{L^{n,1}_\rho L^\infty_\theta}^{1/2}
\n{w_2}_{L^{n,1}_\rho L^\infty_\theta}^{1/2}
\n{u }_{L^2}
\end{equation}
for any $w_1,w_2 \in L^{n,1}_\rho L^\infty_\theta$. In fact, that \eqref{eq:BRV-C} implies \eqref{eq:BRV-w} is obvious by H\"older's inequality for Lorentz spaces. Conversely, we have that
\begin{align*}
\n{(-\Delta-z)^{-1} w_2^{1/2} u}_{L_\rho^{2n/(n-1),\infty} L_\theta^2} 
&= \sup_{0 \neq w_1 \in L_\rho^{n,1} L_\theta^\infty}
\frac{\n{w_1^{1/2} (-\Delta-z)^{-1} w_2^{1/2} u}_{L^2}}{\n{w_1^{1/2}}_{L_\rho^{2n,2} L_\theta^\infty}}
\\
&\le
K
|z|^{-1/2} 
\n{w_2}_{L_\rho^{n,1} L_\theta^\infty}
\n{u}_{L^2},
\end{align*}
that is to say that, for any fixed $w\in L_\rho^{n,1} L_\theta^\infty$, the operator $(-\Delta-z)^{-1} w^{1/2}$ is bounded from $L^2$ to $L_\rho^{2n/(n-1),\infty} L_\theta^2$ with norm 
$$\n{(-\Delta-z)^{-1} w^{1/2}}_{L^2 \to L_\rho^{2n/(n-1),\infty} L_\theta^2}
\le
K |z|^{-1/2} \n{w}_{L_\rho^{n,1} L_\theta^\infty}.$$
By duality this implies that the operator $w^{1/2}(-\Delta-z)^{-1}$ is bounded from $L_\rho^{2n/(n+1),1} L_\theta^2$ to $L^2$ with norm 
$$\n{ w^{1/2} (-\Delta-z)^{-1}}_{L_\rho^{2n/(n+1),1} L_\theta^2 \to L^2}
\le
K |z|^{-1/2} \n{w}_{L_\rho^{n,1} L_\theta^\infty},$$
from which we finally get
\begin{align*}
\n{(-\Delta-z)^{-1} u}_{L_\rho^{2n/(n-1),\infty} L_\theta^2} 
&= \sup_{0 \neq w_1 \in L_\rho^{n,1} L_\theta^\infty}
\frac{\n{w_1^{1/2} (-\Delta-z)^{-1} u}_{L^2}}{\n{w_1^{1/2}}_{L_\rho^{2n,2} L_\theta^\infty}}
\\
&\le
K |z|^{-1/2} 
\n{u}_{L_\rho^{2n/(n+1),1} L_\theta^2}.
\end{align*}
From Barcelo, Ruiz and Vega \cite{BarceloRuizVega97} we have that
\begin{equation}\label{eq:BRV-MT}
	\n{w_1^{1/2} (-\Delta-z)^{-1} w_2^{1/2} u}_{L^2} 
	\le K
	|z|^{-1/2} 
	\n{w_1}_{\MT}^{1/2}
	\n{w_2}_{\MT}^{1/2}
	\n{u }_{L^2}
\end{equation} 
which implies \eqref{eq:BRV-w}. Indeed, we can replace the $\MT$ norm with the $L_\rho^{n,1} L_\theta^\infty$ norm since, as proved in equation (4.2) of \cite{FrankSimon17}, the embedding 
$$L_\rho^{n,1} L_\theta^\infty \hookrightarrow \MT$$ 
holds true (cf. Theorem\til4.4 in \cite{FrankSimon17}).
To be precise, 
equation \eqref{eq:BRV-MT} is proved in \cite{BarceloRuizVega97} for $w_1=w_2 \in\MT$, but the possibility of choosing two different weights follows easily from their proof (see Proposition\til2 of the same paper).

	Consider now the estimate for $\nabla (-\Delta-z)^{-1}$ on the endpoint $C$.
	From Theorem\til2 in \cite{BarceloRuizVega97} we have that
	\begin{equation}\label{eq:nabla-BRV}
		\n{ v}_{L^2} \le K
		\n{w}_{\MT}
		\n{ w^{1/2} \nabla (-\Delta-z) w^{-1/2} v }_{L^2}
	\end{equation}
	for $z\ge0$. Supposing this inequality true for any complex number $z$, we can then obtain estimate \eqref{eq:BRV-C-nabla} following the same argument as above. The fact that \eqref{eq:nabla-BRV} is true everywhere on the complex plane is implicit in the proof given by Barcelo, Ruiz and Vega. Indeed, the proof of Theorem\til2 at pages 373--374 of \cite{BarceloRuizVega97} is still valid for any real $z$. Then, the argument based on the Phragm\'en-Lindel\"of principle exploited at page 373 to prove Theorem\til1.(b) can be adapted also to this situation, proving \eqref{eq:nabla-BRV} for any $z\in\C$.

Finally, for radial functions the radial-angular norms \eqref{eq:radial-angular} from \cite{FrankSimon17} reduce simply to the Lebesgue and Lorentz norms. Real interpolation between the estimates on the open segment $(C,B_0)$ and the ones on the open segment $(B,B')$ coming from Lemma\til\ref{resest-T1} prove the assertion on $\mathcal{P}$ for radial functions.

	Thus ends our recap on the Schr\"odinger resolvent estimates. The results in Lemmata\til\ref{resest-T1},\til\ref{resest-T3} and \ref{resest-T2} are visually summarized in Figure\til\ref{fig:resest}. 
	We conclude this section with an direct corollary of Lemma\til\ref{resest-T2} concerning the free Dirac resolvent.

	\begin{corollary}\label{cor:radial-dirac}
		Let $n\ge2$, $m\ge0$ and 
		$z\in\C \setminus \{ (-\infty,-m] \cup [m,+\infty) \}$. 
		There exists a constant $K>0$ independent on $z$ such that
		\begin{equation*}
			\n{(\D_{m}-z)^{-1}}_{L_\rho^{2n/(n+1),1}L_\theta^2 \to L_\rho^{2n/(n-1),\infty}L_\theta^2} 
			\le K 
			\left[ 1 + \left\lvert \frac{z+m}{z-m} \right\rvert^{\frac{\sgn\Re z}{2}} \right]
		\end{equation*}
		and in particular, if $u\in L^{\frac{2n}{n+1},1}(\R^n)$ is a radial function, then
		\begin{equation*}
			\n{(\D_{m}-z)^{-1} u}_{L^{2n/(n-1),\infty}} 
			\le K 
			\left[ 1 + \left\lvert \frac{z+m}{z-m} \right\rvert^{\frac{\sgn\Re z}{2}} \right]
			\n{u}_{L^{2n/(n+1),1}}.
		\end{equation*}
	\end{corollary}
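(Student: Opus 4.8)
The plan is to deduce the corollary from the Dirac--Schr\"odinger resolvent identity \eqref{eq:dirac-resolvent-identity} together with the endpoint estimates \eqref{eq:BRV-C}--\eqref{eq:BRV-C-nabla} of Lemma~\ref{resest-T2}.(ii). First I would observe that $z\in\C\setminus\{(-\infty,-m]\cup[m,+\infty)\}$ forces $z^2-m^2\in\C\setminus[0,\infty)$ --- indeed, if $z^2-m^2\ge0$ then $z^2\ge0$, so $z$ is real with $|z|\ge m$ --- whence $R_0(z^2-m^2)$ is well defined and Lemma~\ref{resest-T2} applies with spectral parameter $z^2-m^2$. Splitting $\D_m+z=-i\sum_{k=1}^{n}\alpha_k\partial_k+(m\alpha_{n+1}+z)$ in \eqref{eq:dirac-resolvent-identity} gives
\begin{equation*}
	(\D_m-z)^{-1}=-i\sum_{k=1}^{n}\alpha_k\,\partial_k R_0(z^2-m^2)\;+\;(m\alpha_{n+1}+z)\,R_0(z^2-m^2),
\end{equation*}
and I would bound the two summands separately in the $L_\rho^{2n/(n+1),1}L_\theta^2\to L_\rho^{2n/(n-1),\infty}L_\theta^2$ operator norm.

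For the first-order term one has the pointwise estimate $\lvert\sum_k\alpha_k\partial_k f(x)\rvert\le\sqrt{n}\,\lvert\nabla f(x)\rvert$, using $|\alpha_k|=1$ (a consequence of $\alpha_k^2=I_N$) and Cauchy--Schwarz; since the radial--angular (quasi)norm is monotone, this term has operator norm at most $\sqrt{n}\,K$ by \eqref{eq:BRV-C-nabla} applied with parameter $z^2-m^2$. For the zero-order term, the matrix $m\alpha_{n+1}+zI_N$ is normal (a Hermitian matrix plus a scalar) with eigenvalues $z\pm m$, so $\lvert m\alpha_{n+1}+zI_N\rvert=\max(|z+m|,|z-m|)$, and \eqref{eq:BRV-C} bounds this term by $\max(|z+m|,|z-m|)\,K\,|z^2-m^2|^{-1/2}$. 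An elementary case distinction simplifies the last quantity: since $|z+m|^2-|z-m|^2=4m\Re z$, one has $|z+m|\ge|z-m|$ exactly when $\Re z\ge0$, and therefore
\begin{equation*}
	\frac{\max(|z+m|,|z-m|)}{|z^2-m^2|^{1/2}}=\left\lvert\frac{z+m}{z-m}\right\rvert^{\frac{\sgn\Re z}{2}}
\end{equation*}
in every case (both sides equal $1$ when $m=0$, where $z\ne0$). Adding the two contributions and using the quasi-triangle inequality for the Lorentz norm yields the claimed operator bound.

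For the ``in particular'' statement I would use that, $u$ being radial, $g:=R_0(z^2-m^2)u$ is radial as well, say $g(x)=g_0(|x|)$. Then $\partial_k g(x)=g_0'(|x|)\,x_k/|x|$, so $-i\sum_k\alpha_k\partial_k g(x)=-i\,(\alpha\cdot\omega)\,g_0'(|x|)$ with $\omega:=x/|x|$ and $\alpha\cdot\omega:=\sum_{k=1}^{n}\alpha_k\omega_k$; Hermiticity of the $\alpha_k$ and the Clifford relations \eqref{clifford} give $(\alpha\cdot\omega)^*(\alpha\cdot\omega)=(\alpha\cdot\omega)^2=|\omega|^2 I_N=I_N$, i.e.\ $\alpha\cdot\omega$ is unitary. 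Hence $\lvert{-i\sum_k\alpha_k\partial_k g(x)}\rvert=|g_0'(|x|)|=|\nabla g(x)|$ pointwise, so the first-order term has $L^{2n/(n-1),\infty}(\R^n)$-norm equal to $\n{\nabla R_0(z^2-m^2)u}_{L^{2n/(n-1),\infty}}\le K\n{u}_{L^{2n/(n+1),1}}$ by the radial part of Lemma~\ref{resest-T2}.(ii); the zero-order term is bounded by $\max(|z+m|,|z-m|)\,\n{R_0(z^2-m^2)u}_{L^{2n/(n-1),\infty}}\le\max(|z+m|,|z-m|)\,K\,|z^2-m^2|^{-1/2}\,\n{u}_{L^{2n/(n+1),1}}$. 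Summing and invoking the same case distinction as above gives the result.

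I do not expect a genuine obstacle here: once Lemma~\ref{resest-T2} is available, the proof is essentially bookkeeping around the identity \eqref{eq:dirac-resolvent-identity}. The only points requiring some care are the monotonicity and quasi-triangle manipulations for the radial--angular Lorentz norm, and --- for the radial statement --- the observation that $\alpha\cdot\omega$ is unitary, which is exactly what prevents the radial estimate from losing a factor of $\sqrt{n}$ and makes it mirror the Schr\"odinger bound.
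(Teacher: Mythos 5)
Your proposal is correct and follows essentially the same route as the paper: the identity \eqref{eq:dirac-resolvent-identity} combined with the endpoint estimates \eqref{eq:BRV-C}--\eqref{eq:BRV-C-nabla}, splitting off the gradient term (bounded by $\sqrt{n}\,\n{\nabla R_0(z^2-m^2)u}$) and the zero-order term (bounded by $\max\{|z+m|,|z-m|\}\,\n{R_0(z^2-m^2)u}$), together with the elementary identity $\max\{|z+m|,|z-m|\}\,|z^2-m^2|^{-1/2}=\left\lvert\frac{z+m}{z-m}\right\rvert^{\sgn\Re z/2}$. Your extra observation that $\alpha\cdot\omega$ is unitary in the radial case is a harmless refinement (it only sharpens the constant, which the paper simply absorbs into $K$).
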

	
	\begin{proof}
		By the identity \eqref{eq:dirac-resolvent-identity} and the estimates \eqref{eq:BRV-C}--\eqref{eq:BRV-C-nabla}, 
		it is immediate to get
		\begin{equation*}
			\begin{split}
				\n{(\D_{m}-z)^{-1} u}_{L^{2n/(n-1),\infty}} 
				\le&\,
				\n{\sum_{k=1}^n \alpha_k \partial_k (-\Delta+m^2-z^2)^{-1} u }_{L^{2n/(n-1),\infty}} 
				\\
				&+ 
				\n{(m\alpha_{n+1} + zI_N) (-\Delta+m^2-z^2)^{-1} u}_{L^{2n/(n-1),\infty}} 
				\\
				\le&\,
				\sqrt{n} \n{\nabla (-\Delta+m^2-z^2)^{-1} u}_{L^{2n/(n-1),\infty}}
				\\
				&+ 
				\max\{|z+m|,|z-m|\}
				\n{(-\Delta+m^2-z^2)^{-1} u}_{L^{2n/(n-1),\infty}} 
				\\
				\le&\,
				K 
				\left[ 1 + \left\lvert \frac{z+m}{z-m} \right\rvert^{\frac{\sgn\Re z}{2}} \right]
				\n{u}_{L^{2n/(n+1),1}}
			\end{split}
		\end{equation*}
	and hence the claimed inequalities.
	\end{proof}


\begin{figure}[!htb]
	\begin{subfigure}[b]{0.5\textwidth}
		\centering
		\scriptsize		
		\resizebox{\linewidth}{!}{
			\begin{tikzpicture}[scale=7]			
				\foreach \n in {2}
				{
					\coordinate (D) at ( {3/4} , {0} ) {};
					\coordinate (D') at ( {1} , {1/4} ) {};
					\coordinate (B) at ( {(\n+1)/(2*\n)} , {(\n-1)*(\n-1)/(2*\n*(\n+1))} ) {};
					\coordinate (B') at ( {(\n*\n+4*\n-1)/(2*\n*(\n+1))} , {(\n-1)/(2*\n)} ) {};
					\coordinate (A0) at ( {(\n+2)/(2*\n)} , {(\n-2)/(2*\n)} ) {};
					\coordinate (B0) at ( {(\n+3)/(2*(\n+1))} , {(\n-1)/(2*(\n+1))} ) {};
					\coordinate (C) at ( {(\n+1)/(2*\n)} , {(\n-1)/(2*\n)} ) {};
					\coordinate (E) at ( {(\n-1)/(2*\n)} , {(\n-1)/(2*\n)} ) {};
					\coordinate (E') at ( {(\n+1)/(2*\n)} , {(\n+1)/(2*\n)} ) {};
					\coordinate (E0) at ( {1/2} , {1/2} ) {};

					\path [fill=Goldenrod!60] 
					(0,0) -- (1,1) -- (1,0) -- cycle;
					\path [fill=Cyan!60]
					(A0) -- (D) -- (B) -- (B') -- (D') -- cycle;
					\path [fill=Red!60]
					(B) -- (C) -- (B') -- cycle;

					\draw [dotted] (0,1) -- (1,0);
					
					\draw [dashed] (D)node[below left]{$D$} 
					-- (B)node[left]{$B$}
					-- (C)node[above left]{$C$}
					-- (B')node[above]{$B'$}
					-- (D')node[above right]{$D'$}
					;
					
					\draw (B) -- (B0)node[right]{$B_0$} -- (B');
					\draw (0,0) -- (E);
					\draw [dashed] (E)node[left]{$E$} -- (E0)node[above left]{$E_0$} -- (E')node[above]{$E'$};
					\draw (E') -- (1,1);

					{\footnotesize
						\draw (9.25/10,.75/10) node{$\mathcal{T}_2$};
						\draw (6/10,4/10) node{$\widetilde{S}$};
						\draw (8/10,2/10) node{$\mathcal{P}$};
					}

					\draw [latex-latex] 
					(0,1.1)node[left]{$\frac{1}{q}$} -- (0,0) node[below left]{$(0,0)$} -- (1.1,0) node[below]{$\frac{1}{p}$};
					\draw (0,1) node[left]{$(0,1)$} -- (1,1) -- (1,0) node[below]{$A_0$};

					\node at (D) [shape=circle, inner sep=1pt, draw, fill=white]{};
					\node at (D') [shape=circle, inner sep=1pt, draw, fill=white]{};
					\node at (B) [shape=circle, inner sep=1pt, draw, fill=white]{};
					\node at (B') [shape=circle, inner sep=1pt, draw, fill=white]{};
					\node at (C) [shape=circle, inner sep=1pt, draw, fill=white]{};
					\node at (B0) [shape=circle, inner sep=1pt, draw, fill]{};
					\node at (E0) [shape=circle, inner sep=1pt, draw, fill]{};
					\node at (E) [shape=circle, inner sep=1pt, draw, fill=white]{};
					\node at (E') [shape=circle, inner sep=1pt, draw, fill=white]{};
					\node at (A0) [shape=circle, inner sep=1pt, draw, fill=white]{};
				}		
			\end{tikzpicture}
		}
		\caption{Case $n=2$.}
	\end{subfigure}
	\begin{subfigure}[b]{0.5\textwidth}
		\centering
		\scriptsize
		\resizebox{\linewidth}{!}{
			\begin{tikzpicture}[scale=7]
				\foreach \n in {4}
				{
					
					\coordinate (A) at ( {(\n+1)/(2*\n)} , {(\n-3)/(2*\n)} ) {};
					\coordinate (A') at ( {(\n+3)/(2*\n)} , {(\n-1)/(2*\n)} ) {};
					\coordinate (B) at ( {(\n+1)/(2*\n)} , {(\n-1)*(\n-1)/(2*\n*(\n+1))} ) {};
					\coordinate (B') at ( {(\n*\n+4*\n-1)/(2*\n*(\n+1))} , {(\n-1)/(2*\n)} ) {};
					\coordinate (A0) at ( {(\n+2)/(2*\n)} , {(\n-2)/(2*\n)} ) {};
					\coordinate (B0) at ( {(\n+3)/(2*(\n+1))} , {(\n-1)/(2*(\n+1))} ) {};
					\coordinate (C) at ( {(\n+1)/(2*\n)} , {(\n-1)/(2*\n)} ) {};
					\coordinate (E) at ( {(\n-1)/(2*\n)} , {(\n-1)/(2*\n)} ) {};
					\coordinate (E') at ( {(\n+1)/(2*\n)} , {(\n+1)/(2*\n)} ) {};
					\coordinate (E0) at ( {1/2} , {1/2} ) {};
					\coordinate (F) at ( {2/\n} , {0} ) {};
					\coordinate (F') at ( {1} , {(\n-2)/\n} ) {};
					%
					
					
					\coordinate (P*) at ( {(3*\n-2)/2/(3*\n+2)} , {(3*\n-2)/2/(3*\n+2)} ) {};
					\coordinate (P*') at ( {1-(3*\n-2)/2/(3*\n+2)} , {1-(3*\n-2)/2/(3*\n+2)} ) {};
					\coordinate (Po) at ( {(\n*\n+3*\n-6)/2/(\n*\n+3*\n-2)} , {(\n-1)*(\n+2)/2/(\n*\n+3*\n-2)} ) {};
					\coordinate (Po') at ( {1-(\n-1)*(\n+2)/2/(\n*\n+3*\n-2)} , {1-(\n*\n+3*\n-6)/2/(\n*\n+3*\n-2)} ) {};

					\path [pattern color=Goldenrod!80, pattern=crosshatch dots] 
					(F) -- (0,0) -- (1,1) -- (F') -- cycle;
					\path [fill=Goldenrod!60] 
					(F) -- (0,0) -- (P*) -- (Po) -- (E0) -- (Po') -- (P*') -- (1,1) -- (F') -- cycle;
					\path [fill=Cyan!60]
					(A) -- (B) -- (B') -- (A') -- cycle;
					\path [fill=Red!60]
					(B) -- (C) -- (B') -- cycle;

					\draw [dotted] (0,1) -- (1,0);
					
					\draw [dashed] (A)node[left]{$A$} 
					-- (B)node[left]{$B$}
					-- (C)node[above left]{$C$}
					-- (B')node[above]{$B'$}
					-- (A')node[above]{$A'$}
					;
					
					\draw (B) -- (B0)node[right]{$B_0$} -- (B');
					\draw (A) -- (A0)node[right]{$A_0$} -- (A');
					\draw (F)node[below]{$F$} -- (A);
					\draw (A') -- (F')node[right]{$F'$};
					\draw (0,0) -- (P*);
					\draw [dashed] (P*)node[left]{$P_*$} -- (Po)node[below]{$P_\circ$} -- (E0)node[above left]{$E_0$} -- (Po')node[right]{$P_\circ'$} -- (P*')node[above]{$P_*'$};
					\draw (P*') -- (1,1);

					{\footnotesize
						\draw ( {(\n+1)/(2*\n)/2+(\n+2)/(2*\n)/2} , {(\n-1)*(\n-1)/(2*\n*(\n+1))} ) node{$\mathcal{T}_n$};
						\draw ( {(3*\n-2)/2/(3*\n+2)} , {(\n-3)/(2*\n)} ) node{$\widetilde{\mathcal{S}}$};
						\draw ( {(\n+1)/(2*\n)/2 + (\n+3)/(2*(\n+1))/2} , {(\n-1)/(2*\n)/2 + (\n-1)/(2*(\n+1))/2} ) node{$\mathcal{P}$};		
						\draw ( {(3*\n-2)/2/(3*\n+2)/2 + 1/4} , {(3*\n-2)/2/(3*\n+2)/2 +1/4} ) node{$\mathcal{R}$};	
						\draw ( {1-(3*\n-2)/2/(3*\n+2)/2 - 1/4} , {1-(3*\n-2)/2/(3*\n+2)/2 -1/4} ) node{$\mathcal{R'}$};	
					}

					\draw [latex-latex] 
					(0,1.1)node[left]{$\frac{1}{q}$} -- (0,0) node[below left]{$(0,0)$} -- (1.1,0) node[below]{$\frac{1}{p}$};
					\draw (0,1) node[left]{$(0,1)$} -- (1,1) -- (1,0) node[below]{$(1,0)$};

					\node at (A) [shape=circle, inner sep=1pt, draw, fill=white]{};
					\node at (A') [shape=circle, inner sep=1pt, draw, fill=white]{};
					\node at (B) [shape=circle, inner sep=1pt, draw, fill=white]{};
					\node at (B') [shape=circle, inner sep=1pt, draw, fill=white]{};
					\node at (C) [shape=circle, inner sep=1pt, draw, fill=white]{};
					\node at (A0) [shape=circle, inner sep=1pt, draw, fill]{};
					\node at (B0) [shape=circle, inner sep=1pt, draw, fill]{};
					\node at (E0) [shape=circle, inner sep=1pt, draw, fill]{};
					\node at (F) [shape=circle, inner sep=1pt, draw, fill=white]{};
					\node at (F') [shape=circle, inner sep=1pt, draw, fill=white]{};
					\node at (P*) [shape=circle, inner sep=1pt, draw, fill=white]{};
					\node at (Po) [shape=circle, inner sep=1pt, draw, fill=white]{};
					\node at (P*') [shape=circle, inner sep=1pt, draw, fill=white]{};
					\node at (Po') [shape=circle, inner sep=1pt, draw, fill=white]{};
					
				}
				
			\end{tikzpicture}
		}
		\caption{Case $n\ge3$.}
	\end{subfigure}

	\caption{In this picture we visualize the many regions and endpoints appearing in Section\til\ref{sec:estimates}. The Kenig-Ruiz-Sogge-Guti\'errez region $\mathcal{T}_n$ from Lemma\til\ref{resest-T1} is highlighted in blue, while in red we show the triangle $\mathcal{P}$ from Lemma\til\ref{resest-T2} about the estimates for radial functions. Finally, the yellow region $\widetilde{\mathcal{S}}$ is such that $\mathcal{S} \setminus\widetilde{\mathcal{R}} =  \widetilde{\mathcal{S}} \cup \mathcal{P} \cup \mathcal{T}_n $, where $\mathcal{S}$ is the Kwon-Lee region interested by Lemma\til\ref{resest-T3} and $\widetilde{\mathcal{R}}=\mathcal{R}\cup\mathcal{R}'$ is pictured dotted.}
	\label{fig:resest}
\end{figure}
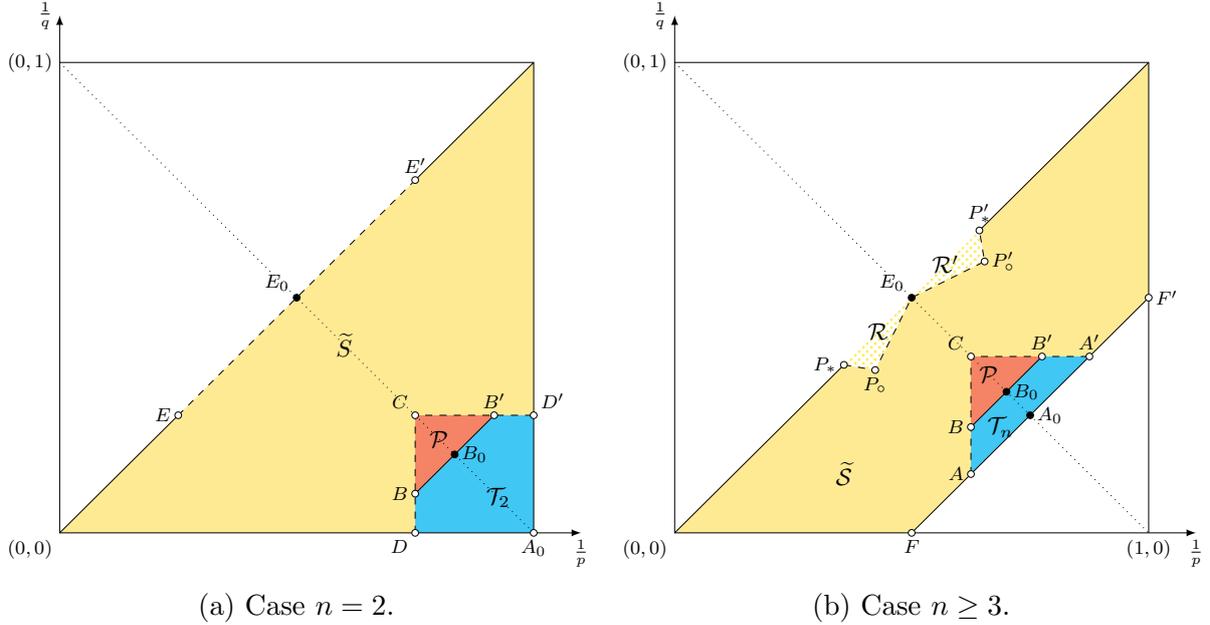


%
Let us combine now the estimates above with the Birman-Schwinger principle to get our claimed results.


\section{Proofs for the Theorems}\label{sec:BS}

This section is splitted in two parts: in the first one we recall the technicalities of the Birman-Schwinger principle and we properly define an operator perturbed by a factorizable potential; in the second part we complete the (very straightforward) computations to prove our theorems in Section\til\ref{sec:main}.

\subsection{The Birman-Schwinger principle}

The number of works in the literature which make use of the Birman-Schwinger principle is huge. Anyway, recently an abstract analysis has been carried out by Hansmann and Krej\v ci\v r\'ik in \cite{HansmannKrejcirik2020}.
Here we sketch in a synthetic way their approach. We refer to their work for more results, the background and a complete discussion.

First of all, let us state the necessary hypothesis.

\begin{hypothesis}\label{ass}
	Let $\H$ and $\H'$ be complex separable Hilbert spaces, $H_0$ be a selfadjoint operator in $\H$ and $|H_0| := (H_0^2)^{1/2}$ its absolute value. Also, let $\A \colon \dom(\A) \subseteq \H \to \H'$ and $\B \colon \dom(\B) \subseteq \H \to \H'$ be linear operators such that $\dom(|H_0|^{1/2}) \subseteq \dom(\A) \cap \dom(\B)$.
	
	We assume that for some (and hence for all) $b>0$ the operators $\A(|H_0|+b)^{-1/2}$ and $\B(|H_0|+b)^{-1/2}$
	are bounded and linear from $\H$ to $\H$.
\end{hypothesis}

Defined $G_0 := |H_0|+1$, let us consider, for any $z \in \rho(H_0)$, the \emph{Birman-Schwinger operator}
\begin{equation}\label{def:Kz}
K_z := [\A G_0^{-1/2}] [G_0 (H_0-z)^{-1}] [\B G_0^{-1/2}]^*,
\end{equation}
which is linear and bounded from $\H'$ to $\H'$.

\begin{remark}\label{rem:BS}
	The Birman-Schwinger operator defined above is a bounded extension of the operator $\A(H_0-z)\B^*$ defined on $\dom(\B^*)$, introduced in Section\til\ref{sec:main}. Hence, if $\dom(\B^*)$ is dense in $\H'$, then $K_z$ is exactly the closure of $\A(H_0-z)^{-1}\B^*$. 
\end{remark}

\begin{hypothesis}\label{ass2'}
	There exists $z_0 \in \rho(H_0)$ such that $\n{K_{z_0}}_{\H'\to\H'} <1$.
\end{hypothesis}

This last hypothesis is in fact much stronger than necessary, and can be substituted by the weaker assertion: \textit{There exists $z_0 \in \rho(H_0)$ such that $-1\not\in\sigma(K_{z_0})$}. But this relaxed hypothesis is way more difficult to check in practice, and Assumption\til\ref{ass2'} is good enough for us. 

We can now properly define the perturbed operator $H_0 + \V$ with $\V=\B^*\A$. 

\begin{lemma}
	\label{lem:H_V}
	Suppose Assumptions\til\ref{ass} and \ref{ass2'}. There exists a unique closed extension $H_\V$ of $H_0+\V$ such that $\dom(H_\V) \subseteq \dom(|H_0|^{1/2})$ and we have the following representation formula:
	\begin{equation*}
	(\phi, H_\V \psi)_{\H\to\H} = (G_0^{1/2} \phi, (H_0 G_0^{-1} + [\B G_0^{-1/2}]^* \A G_0^{-1/2}) G_0^{1/2}\psi)_{\H\to\H}
	\end{equation*}
	for $\phi\in\dom(|H_0|^{1/2})$, $\psi\in\dom(H_\V)$.
\end{lemma}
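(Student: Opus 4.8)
The plan is to construct $H_\V$ from an explicit candidate for its resolvent at the anchor point $z_0$ furnished by Assumption~\ref{ass2'}, and then to check that the resulting operator has all the listed properties. Throughout write $G_0=|H_0|+1$, $\wt{\A}:=\A G_0^{-1/2}$, $\wt{\B}:=\B G_0^{-1/2}$ (everywhere defined and bounded by Assumption~\ref{ass}, since $\mathrm{ran}\,G_0^{-1/2}=\dom(|H_0|^{1/2})\subseteq\dom(\A)\cap\dom(\B)$) and $R_0(z):=(H_0-z)^{-1}$. The spectral theorem for $H_0$ shows at once that $G_0^{1/2}$ commutes with $R_0(z)$, that $G_0^{1/2}R_0(z)G_0^{1/2}=G_0R_0(z)$, and that $G_0^{s}R_0(z)$ is bounded for $0\le s\le1$ and $z\in\rho(H_0)$. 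Consequently
\begin{equation*}
D:=\wt{\A}\,[G_0^{1/2}R_0(z_0)]=\A R_0(z_0)
\qquad\text{and}\qquad
E:=[R_0(z_0)G_0^{1/2}]\,\wt{\B}^{*}
\end{equation*}
are bounded, $E$ extends $R_0(z_0)\B^{*}$, $\mathrm{ran}\,E\subseteq\mathrm{ran}(R_0(z_0)G_0^{1/2})\subseteq\dom(|H_0|^{1/2})\subseteq\dom(\A)$, and
\begin{equation*}
\A E=\wt{\A}\,[G_0^{1/2}R_0(z_0)G_0^{1/2}]\,\wt{\B}^{*}=\wt{\A}\,[G_0R_0(z_0)]\,\wt{\B}^{*}=K_{z_0},
\end{equation*}
with $K_{z_0}$ the operator in \eqref{def:Kz}. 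By Assumption~\ref{ass2'} and a Neumann series, $I+K_{z_0}$ is boundedly invertible on $\H'$.

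Next I would put $R_\V(z_0):=R_0(z_0)-E\,(I+K_{z_0})^{-1}D$, a bounded operator on $\H$. Applying $\A$ (legitimate, since both summands take values in $\dom(\A)$) and using $\A E=K_{z_0}$ gives the Birman--Schwinger identity $\A R_\V(z_0)=(I+K_{z_0})^{-1}D$; hence if $R_\V(z_0)x=0$ then $(I+K_{z_0})^{-1}Dx=0$, so $Dx=0$, so $R_0(z_0)x=E(I+K_{z_0})^{-1}Dx=0$ and $x=0$, i.e.\ $R_\V(z_0)$ is injective. Running the identical construction with the roles of $\A,\B$ interchanged and $z_0$ replaced by $\o{z_0}$ produces, by the same formulas, exactly $R_\V(z_0)^{*}$ --- its Birman--Schwinger operator being $K_{z_0}^{*}$, of norm $\n{K_{z_0}}<1$, so Assumption~\ref{ass2'} again holds --- whence $R_\V(z_0)^{*}$ is injective too and $\mathrm{ran}\,R_\V(z_0)$ is dense in $\H$. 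It follows that $R_\V(z_0)^{-1}$ is a densely defined closed operator, so that
\begin{equation*}
H_\V:=z_0+R_\V(z_0)^{-1},\qquad \dom(H_\V):=\mathrm{ran}\,R_\V(z_0),
\end{equation*}
is a closed, densely defined operator with $z_0\in\rho(H_\V)$ and $(H_\V-z_0)^{-1}=R_\V(z_0)$; since $\dom(H_\V)\subseteq\mathrm{ran}\,R_0(z_0)+\mathrm{ran}\,E\subseteq\dom(|H_0|^{1/2})$, the domain constraint holds.

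To see that $H_\V$ extends $H_0+\V$, take $\psi\in\dom(H_0)$ with $\A\psi\in\dom(\B^{*})$ and set $g:=(H_0-z_0)\psi+\B^{*}\A\psi$: then $R_0(z_0)g=\psi+E\A\psi$, while $Dg=\A\psi+\A E\A\psi=(I+K_{z_0})\A\psi$, so $E(I+K_{z_0})^{-1}Dg=E\A\psi$ and $R_\V(z_0)g=\psi$; thus $\psi\in\dom(H_\V)$ and $H_\V\psi=z_0\psi+g=(H_0+\V)\psi$. The representation formula is then a direct computation: writing $\psi=R_\V(z_0)x$ and $w:=(I+K_{z_0})^{-1}Dx$, one has $G_0^{1/2}\psi=[G_0^{1/2}R_0(z_0)]x-[G_0R_0(z_0)\wt{\B}^{*}]w$, whence $\wt{\A}G_0^{1/2}\psi=Dx-K_{z_0}w=w$ and $\wt{\B}^{*}\wt{\A}G_0^{1/2}\psi=\wt{\B}^{*}w$, while $H_0G_0^{-1}G_0^{1/2}\psi=(I+z_0R_0(z_0))(G_0^{-1/2}x-\wt{\B}^{*}w)$ by $H_0R_0(z_0)=I+z_0R_0(z_0)$; adding these, pairing against $G_0^{1/2}\phi$ with $\phi\in\dom(|H_0|^{1/2})$, moving one factor $G_0^{1/2}$ back onto $\phi$ by self-adjointness (all vectors involved lie in $\dom(G_0^{1/2})$ by the boundedness facts above), and cancelling, one obtains $(\phi,x)+z_0(\phi,\psi)=(\phi,H_\V\psi)$, which is the asserted identity.

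The one genuinely delicate point is uniqueness: any closed $\wt{H}$ with $H_0+\V\subseteq\wt{H}$ and $\dom(\wt{H})\subseteq\dom(|H_0|^{1/2})$ must equal $H_\V$. The strategy is to show that such an $\wt{H}$ is represented by the same sesquilinear form as $H_\V$ --- that is, the integration-by-parts identity $(\phi,(H_0+\V)\psi)=(G_0^{1/2}\phi,(H_0G_0^{-1}+\wt{\B}^{*}\wt{\A})G_0^{1/2}\psi)$, immediate for $\psi$ in the core $\{\psi\in\dom(H_0):\A\psi\in\dom(\B^{*})\}$, must extend to all $\psi\in\dom(\wt{H})$. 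Once this is known, the functional $\phi\mapsto(\phi,\wt{H}\psi)$ coincides with the one that defines $\dom(H_\V)$, forcing $\wt{H}\subseteq H_\V$, and then $\wt{H}-z_0$ is an injective closed restriction of the bijective $H_\V-z_0$ sharing its lower bound, so it has closed and dense --- hence full --- range; thus $z_0\in\rho(\wt{H})$, $(\wt{H}-z_0)^{-1}=R_\V(z_0)$, and finally $\wt{H}=H_\V$. The real work, and the main obstacle, is precisely the passage of the form identity to all of $\dom(\wt{H})$, together with the bookkeeping of closures and adjoints of the unbounded products $\A R_0(z_0)$, $R_0(z_0)\B^{*}$, $G_0^{1/2}\wt{\B}^{*}$ appearing throughout; this abstract machinery is worked out in \cite{HansmannKrejcirik2020}, which we follow.
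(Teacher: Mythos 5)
Your proposal is correct in its constructive part but follows a genuinely different route than the paper, which in fact gives no self-contained proof here: it simply invokes Theorem~5 of \cite{HansmannKrejcirik2020}, where $H_\V$ is obtained as a pseudo-Friedrichs extension, i.e.\ by a form-theoretic construction applied to the sesquilinear form on the right-hand side of the representation formula, with Assumption~\ref{ass2'} supplying the needed invertibility. You instead build the resolvent explicitly through the Krein/Konno--Kuroda-type formula $R_\V(z_0)=R_0(z_0)-E(I+K_{z_0})^{-1}D$ and set $H_\V:=z_0+R_\V(z_0)^{-1}$; your verifications (boundedness of $D,E$, the identity $\A E=K_{z_0}$, injectivity, identification of $R_\V(z_0)^*$ with the swapped construction at $\overline{z_0}$, the extension property, and the representation formula) all check out, and as a by-product you get $z_0\in\rho(H_\V)$ together with $\A(H_\V-z_0)^{-1}=(I+K_{z_0})^{-1}\A(H_0-z_0)^{-1}$, which is exactly the mechanism behind Lemma~\ref{lem:BS}. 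The one caveat concerns uniqueness, where you aim at a statement stronger than the lemma asserts: uniqueness is claimed only among closed extensions whose domain lies in $\dom(|H_0|^{1/2})$ \emph{and} which satisfy the representation formula, so you do not need to extend the form identity from the core to all of $\dom(\wt{H})$ (the step you single out as the main obstacle and delegate to \cite{HansmannKrejcirik2020}). For the stated claim it suffices to note that, for $\psi\in\dom(\wt{H})$ and all $\phi\in\dom(|H_0|^{1/2})$, the representation formula applied to both $\wt{H}$ and your $H_\V$ gives $(G_0^{1/2}\phi,\,M\,G_0^{1/2}(\psi-R_\V(z_0)(\wt{H}-z_0)\psi))=0$ with $M:=(H_0-z_0)G_0^{-1}+[\B G_0^{-1/2}]^*\A G_0^{-1/2}$; since $M=\bigl[I+[\B G_0^{-1/2}]^*\A G_0(H_0-z_0)^{-1}\bigr](H_0-z_0)G_0^{-1}$ is boundedly invertible (because $\n{K_{z_0}}<1$ and $\sigma(XY)\setminus\{0\}=\sigma(YX)\setminus\{0\}$), this yields $\wt{H}\subseteq H_\V$; upgrading this inclusion to equality is the only point where the machinery of \cite{HansmannKrejcirik2020} is genuinely needed, and deferring it to the same reference the paper itself relies on is acceptable. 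In sum: the paper's citation buys existence and uniqueness wholesale, while your construction buys an explicit resolvent and a transparent link to the Birman--Schwinger principle, at the cost of leaving that last identification to the reference.
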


The above result correspond to Theorem\til5 in \cite{HansmannKrejcirik2020}, where the operator $H_\V$ is obtained via {the} pseudo-Friedrichs extension. Instead, from Theorem\til\mbox{6, 7, 8} and Corollary\til4 of \cite{HansmannKrejcirik2020} we get the abstract Birman-Schwinger principle stated below.

\begin{lemma}\label{lem:BS}
	Suppose Assumption\til\ref{ass} and \ref{ass2'}. Therefore:
	\begin{enumerate}[label=(\roman*)]
		\item if $z\in\rho(H_0)$, then $z \in \sigma_p(H_\V)$ if and only if $-1 \in \sigma_p(K_z)$;
		
		\item if $z \in \sigma_c(H_0) \cap \sigma_p(H_\V)$ and $H_\V \phi = z \phi$ for $0\neq\phi\in\dom(H_\V)$, then $\varphi:=\A\phi\neq0$ and
		\begin{equation*}
		\lim_{\e\to0^\pm} (K_{z+i\e}\varphi,\psi)_{\H'\to\H'} = -(\varphi,\psi)_{\H'\to\H'} 
		\end{equation*}
		for all $\psi\in\H'$.
	\end{enumerate}
	In particular
	\begin{enumerate}[label=(\roman*)]
		\item if $z\in\sigma_p(H_\V) \cap \rho(H_0)$, then $\n{K_z}_{\H'\to\H'} \ge 1$;
		
		\item if $z\in\sigma_p(H_\V) \cap \sigma_c(H_0)$, then $\liminf_{\e\to0^\pm} \n{K_{z+i\e}}_{\H\to\H'} \ge 1.$
	\end{enumerate}
\end{lemma}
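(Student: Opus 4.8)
The final statement to prove is Lemma \ref{lem:BS}, the abstract Birman-Schwinger principle. The plan is essentially to reduce everything to the cited results in \cite{HansmannKrejcirik2020}, so the ``proof'' is really a matter of correctly matching notation and extracting the relevant conclusions, together with deriving the two norm inequalities as immediate corollaries.

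First I would recall the explicit relation between $K_z$ and the resolvent of $H_\V$. Under Assumptions \ref{ass} and \ref{ass2'}, Lemma \ref{lem:H_V} produces the operator $H_\V$ as the pseudo-Friedrichs extension, and the abstract theory in \cite{HansmannKrejcirik2020} gives a resolvent identity of the form $(H_\V - z)^{-1} = (H_0-z)^{-1} - [\B G_0^{-1/2}](G_0(H_0-z)^{-1})^*(I + K_z)^{-1}[\A G_0^{-1/2}](G_0(H_0-z)^{-1})$ valid for $z \in \rho(H_0)$ with $-1 \notin \sigma(K_z)$. From this identity one reads off that, for $z \in \rho(H_0)$, invertibility of $I+K_z$ is equivalent to $z \in \rho(H_\V)$; passing to the (algebraic) eigenvalue statement, $z \in \sigma_p(H_\V)$ iff $-1 \in \sigma_p(K_z)$ — this is exactly Theorems 6 and 7 of \cite{HansmannKrejcirik2020}, and I would just cite them after confirming the hypotheses translate correctly ($H_0$ selfadjoint, the boundedness of $\A G_0^{-1/2}$, $\B G_0^{-1/2}$, and the smallness at one point $z_0$). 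This gives part (i).

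For part (ii), the embedded-eigenvalue case $z \in \sigma_c(H_0) \cap \sigma_p(H_\V)$, the argument is the content of Theorem 8 and Corollary 4 of \cite{HansmannKrejcirik2020}: if $H_\V \phi = z\phi$ with $0 \neq \phi \in \dom(H_\V)$, one first checks $\varphi := \A\phi \neq 0$ (if $\A\phi = 0$ then the representation formula in Lemma \ref{lem:H_V} forces $H_0$-eigenvalue behavior, contradicting $z \in \sigma_c(H_0)$ and $\phi \neq 0$), and then one takes the limit $\e \to 0^\pm$ of the weak form $(K_{z+i\e}\varphi, \psi)$. The key input is the existence of the boundary values of $G_0(H_0-z)^{-1}$ sandwiched appropriately — but since we only need the limit to exist along the specific vector $\varphi = \A\phi$ coming from an actual eigenfunction, this is extracted from the eigenvalue equation itself rather than from a limiting absorption principle, exactly as in \cite{HansmannKrejcirik2020}. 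I would present this as a citation with a one-line indication of why $\varphi \neq 0$.

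Finally, the two norm bounds are immediate. For the first: if $z \in \sigma_p(H_\V) \cap \rho(H_0)$ then by (i) there is $0 \neq \varphi \in \H'$ with $K_z \varphi = -\varphi$, so $\n{K_z}_{\H' \to \H'} \geq \n{K_z\varphi}_{\H'}/\n{\varphi}_{\H'} = 1$. For the second: if $z \in \sigma_p(H_\V) \cap \sigma_c(H_0)$, take $\varphi = \A\phi \neq 0$ from (ii); then for every $\psi \in \H'$ we have $|(\varphi,\psi)_{\H'}| = \lim_{\e\to 0^\pm} |(K_{z+i\e}\varphi,\psi)_{\H'}| \leq \liminf_{\e\to0^\pm} \n{K_{z+i\e}}_{\H\to\H'} \n{\varphi}_{\H'}\n{\psi}_{\H'}$; choosing $\psi = \varphi$ and dividing by $\n{\varphi}_{\H'}^2 > 0$ gives $\liminf_{\e\to0^\pm}\n{K_{z+i\e}}_{\H\to\H'} \geq 1$. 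The main (minor) obstacle is purely bookkeeping: making sure the factorized form $K_z = [\A G_0^{-1/2}][G_0(H_0-z)^{-1}][\B G_0^{-1/2}]^*$ used here is literally the object for which \cite{HansmannKrejcirik2020} proves these statements, and that our Assumption \ref{ass2'} (norm $<1$ at one point) implies their hypothesis $-1 \notin \sigma(K_{z_0})$ — which it trivially does. There is no genuine analytic difficulty here beyond what is already in the cited reference.
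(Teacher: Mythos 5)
Your proposal is correct and follows the same route as the paper: the paper itself obtains both parts (i) and (ii) by directly invoking Theorems 6, 7, 8 and Corollary 4 of \cite{HansmannKrejcirik2020}, with the two norm bounds being the same immediate consequences you spell out. Your extra verifications (that Assumption \ref{ass2'} implies $-1\notin\sigma(K_{z_0})$, that $\varphi=\A\phi\neq0$, and the $\psi=\varphi$ choice in the embedded case) are exactly the bookkeeping the paper leaves implicit.
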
 

By strengthening Assumption\til\ref{ass2'} asking the norm of the Birman-Schwinger operator to be strictly less than $1$ uniformly respect to $z\in\rho(H_0)$, Theorem\til3 in \cite{HansmannKrejcirik2020} state the following result on the spectrum invariance for the perturbed operator.

\begin{lemma}
	\label{lem:BS2}
	Suppose Assumption\til\ref{ass} and that
	$
	\sup_{z\in\rho(H_0)} \n{K_z}_{\H'\to\H'} <1 .
	$
	Then:
	\begin{enumerate}[label=(\roman*)]
		\item $\sigma(H_0) = \sigma(H_\V)$;
		\item $\sigma_p(H_\V) \cup \sigma_r(H_\V) \subseteq \sigma_p(H_0)$ and $\sigma_c(H_0) \subseteq \sigma_c(H_V)$.
	\end{enumerate}	
	In particular, if $\sigma(H_0)=\sigma_c(H_0)$, then $\sigma(H_\V)=\sigma_c(H_\V)=\sigma_c(H_0)$.
\end{lemma}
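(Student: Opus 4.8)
\noindent\emph{Proof proposal.} This is Theorem~3 of \cite{HansmannKrejcirik2020}, and the plan is to deduce it from the pseudo-Friedrichs resolvent identity behind Lemma~\ref{lem:H_V} together with the Birman--Schwinger correspondence of Lemma~\ref{lem:BS}. Write $\kappa:=\sup_{z\in\rho(H_0)}\n{K_z}_{\H'\to\H'}<1$. The first step is the inclusion $\sigma(H_\V)\subseteq\sigma(H_0)$: for each $z\in\rho(H_0)$ the hypothesis makes $I+K_z$ invertible on $\H'$ by a Neumann series, with $\n{(I+K_z)^{-1}}\le(1-\kappa)^{-1}$ \emph{uniformly} in $z$, and substituting this into the (regularized) second resolvent identity underlying the construction of $H_\V$ in Lemma~\ref{lem:H_V} -- schematically $(H_\V-z)^{-1}=(H_0-z)^{-1}-(H_0-z)^{-1}\B^*(I+K_z)^{-1}\A(H_0-z)^{-1}$, each factor read through the bounded operators $\A G_0^{-1/2}$, $\B G_0^{-1/2}$, $G_0(H_0-z)^{-1}$ -- exhibits $(H_\V-z)^{-1}$ as a bounded operator. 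Hence $\rho(H_0)\subseteq\rho(H_\V)$.

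Next I would locate the point and residual spectra of $H_\V$. If some $z\in\rho(H_0)$ were an eigenvalue of $H_\V$, Lemma~\ref{lem:BS}(i) would give $-1\in\sigma_p(K_z)$, hence $\n{K_z}\ge1$, contradicting $\kappa<1$; so $\sigma_p(H_\V)\subseteq\sigma(H_0)$. Running the same argument for the adjoint problem -- $H_\V^*$ is the pseudo-Friedrichs operator built from the factorization $\V^*=\A^*\B$, whose Birman--Schwinger operator is $K_{\bar z}^*$, of the same norm -- together with $\sigma_r(H_\V)\subseteq\overline{\sigma_p(H_\V^*)}$ and the reality of $\sigma(H_0)$, yields $\sigma_r(H_\V)\subseteq\sigma(H_0)$. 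Finally, no $\lambda\in\sigma_c(H_0)$ can be an eigenvalue of $H_\V$: by Lemma~\ref{lem:BS}(ii) that would force $\liminf_{\e\to0^\pm}\n{K_{\lambda+i\e}}_{\H\to\H'}\ge1$, while $\lambda+i\e\in\rho(H_0)$ for $\e\neq0$ gives $\n{K_{\lambda+i\e}}\le\kappa<1$. Since $\sigma(H_0)=\sigma_p(H_0)\sqcup\sigma_c(H_0)$ for the selfadjoint $H_0$, these facts combine to $\sigma_p(H_\V)\cup\sigma_r(H_\V)\subseteq\sigma_p(H_0)$.

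The remaining and, I expect, hardest point is the reverse inclusion $\sigma(H_0)\subseteq\sigma(H_\V)$. Here I would use the symmetry of the setup: writing $H_0=H_\V-\B^*\A$ and using the resolvent formula, one computes that the Birman--Schwinger operator of this ``inverse'' perturbation is $\widetilde K_z=-K_z(I+K_z)^{-1}$, which satisfies $I+\widetilde K_z=(I+K_z)^{-1}$, so it is invertible with $\sup_{z\in\rho(H_0)}\n{(I+\widetilde K_z)^{-1}}\le1+\kappa$, and moreover $\A(H_\V-z)^{-1}=(I+K_z)^{-1}\A(H_0-z)^{-1}$ is bounded (similarly for $(H_\V-z)^{-1}\B^*$). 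Since $\dom(H_\V)\subseteq\dom(|H_0|^{1/2})$, the regularizer $G_0=|H_0|+1$ still controls $\A$ and $\B$ relative to $H_\V$, so one may re-run the pseudo-Friedrichs/resolvent-identity machinery with the roles of $H_0$ and $H_\V$ interchanged and recover $\rho(H_\V)\subseteq\rho(H_0)$. (An equivalent route is to deform along $H_{t\V}$, $t\in[0,1]$: its Birman--Schwinger operator is $tK_z$, of norm $\le t\kappa<1$, and norm-continuity of $t\mapsto(H_{t\V}-z)^{-1}$ prevents the resolvent sets from changing with $t$.) The delicate point in either route is to verify that the abstract framework -- Assumption~\ref{ass} and the pseudo-Friedrichs extension -- indeed applies with the non-selfadjoint $H_\V$ in place of $H_0$; granting this, $\sigma(H_\V)=\sigma(H_0)$, and because $\sigma(H_0)=\sigma_p(H_0)\sqcup\sigma_c(H_0)$ while $\sigma_p(H_\V)\cup\sigma_r(H_\V)\subseteq\sigma_p(H_0)$, the complement forces $\sigma_c(H_0)\subseteq\sigma_c(H_\V)$. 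The ``in particular'' then follows at once: if $\sigma(H_0)=\sigma_c(H_0)$ then $\sigma_p(H_0)=\varnothing$, hence $\sigma_p(H_\V)=\sigma_r(H_\V)=\varnothing$ and $\sigma(H_\V)=\sigma_c(H_\V)=\sigma_c(H_0)$.
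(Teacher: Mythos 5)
Note first that the paper does not prove this lemma at all: it is quoted verbatim from Theorem~3 of \cite{HansmannKrejcirik2020}, so your attempt has to be judged on its own merits. Most of it is sound and follows the natural route: the Neumann series for $(I+K_z)^{-1}$ plus the regularized resolvent identity give $\rho(H_0)\subseteq\rho(H_\V)$; Lemma~\ref{lem:BS}(i)--(ii) exclude eigenvalues of $H_\V$ both in $\rho(H_0)$ and in $\sigma_c(H_0)$; the residual spectrum is handled through $H_\V^*=H_{\V^*}$ (itself a theorem of \cite{HansmannKrejcirik2020}, which you correctly invoke), and the final bookkeeping yielding (ii) and the ``in particular'' is fine.

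The genuine gap is exactly where you yourself place it: the inclusion $\sigma(H_0)\subseteq\sigma(H_\V)$ is never proved. Your main route -- ``re-run the pseudo-Friedrichs machinery with $H_0$ and $H_\V$ interchanged'' -- is not available as stated, because Assumption~\ref{ass} and the construction in Lemma~\ref{lem:H_V} require the unperturbed operator to be selfadjoint ($G_0=|H_0|+1$ and the square roots have no meaning for the non-selfadjoint $H_\V$); you flag this and then simply grant it. The parenthetical homotopy argument does not repair this: norm-resolvent continuity of $t\mapsto H_{t\V}$ only prevents the spectrum from suddenly \emph{expanding} in the limit (if $z\in\rho(H_{t_0\V})$ then $z\in\rho(H_{t\V})$ for $t$ near $t_0$), whereas what must be excluded is points of $\sigma(H_0)$ \emph{leaving} the spectrum along the deformation, and for non-selfadjoint families continuity alone cannot exclude that. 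The missing idea is to use the selfadjointness of $H_0$ quantitatively. If $\lambda\in\sigma(H_0)\cap\rho(H_\V)$, set $z=\lambda+i\e$ with $\e\neq0$ small; your own algebra gives $I+\widetilde K_z=(I+K_z)^{-1}$, hence $(I+\widetilde K_z)^{-1}=I+K_z$ has norm at most $1+\sup_{w\in\rho(H_0)}\n{K_w}<2$, and the reversed resolvent identity reads
\begin{equation*}
(H_0-z)^{-1}=(H_\V-z)^{-1}+(H_\V-z)^{-1}\B^*\,(I+K_z)\,\A(H_\V-z)^{-1},
\end{equation*}
where $\A(H_\V-z)^{-1}=[\A G_0^{-1/2}][G_0^{1/2}(H_\V-z)^{-1}]$ is bounded (closed graph theorem, since $\dom(H_\V)\subseteq\dom(|H_0|^{1/2})$) and locally uniformly so near $\lambda$ by the first resolvent identity, and likewise for the closure of $(H_\V-z)^{-1}\B^*$ via the adjoint $H_\V^*=H_{\V^*}$ and $\bar\lambda\in\rho(H_\V^*)$. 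The right-hand side therefore stays bounded as $\e\to0$, contradicting $\n{(H_0-\lambda-i\e)^{-1}}\ge\e^{-1}\to\infty$, which holds because $\lambda\in\sigma(H_0)$ and $H_0$ is selfadjoint. With this blow-up argument inserted in place of ``granting this'', your outline becomes a complete proof.
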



\subsection{Computations for the proofs}\label{subsec:proof}
In our case, $\H=\H'=L^2(\R^n;\C^{N\times N})$ and $\V=\B^*\A$ is the multiplication operator in $\H$, with initial domain $\dom(\V)=C_0^\infty(\R^n;\C^{N})$, generated by a matrix-valued function $\V\colon\R^n\to\C^{N\times N}$ (with the customary abuse of notation, we use the same symbol to denote the matrix and the operator). Same thing holds for the operators $\A$ and $\B^*$, which is not restrictive to consider closed. In this way, Assumption\til\ref{ass} is verified by the Closed Graph Theorem. By Remark\til\ref{rem:BS}, since $\dom(\B^*)=C_0^\infty(\R^n;\C^{N})$, then $K_z = \overline{\A(H_0-z)^{-1}\B^*}$. 
Therefore, even in the general case (i.e. removing the assumption of convenience that $\V$ is bounded) we need to study just $\n{\A(H_0-z)^{-1}\B^*}_{\H\to\H}$.

Recall the identity \eqref{Kz-1}. Exploiting the Rigidity Assumptions and setting for simplicity $k^2 \equiv k^2(z) := z^2-m^2$, \eqref{Kz-1} becomes
\begin{align*}
\A (\D_m -z)^{-1} \B^*
=
(m A \alpha_{n+1} B^* + zAB^*) a R_0(k^2) \overline{b}.
\end{align*}
In particular, assume that RA($\iota$) hold, for fixed $\iota\in\{i,ii,iii,iv\}$. Then, since $|A|=|B|=1$, we get 
\begin{equation*}
	\n{A (m\alpha_{n+1} + z I_N)B^*}_{L^\infty} 
	\le
	\varkappa
\end{equation*}
where
\begin{equation*}
	\varkappa \equiv \varkappa(z)
	:=
	\left\{
	\begin{aligned}
	& |k(z)| \left\lvert \frac{z+m}{z-m} \right\rvert^{\frac{\sgn\Re z}{2}}
	&&\text{if $\iota=i$ and $m>0$,}
	\\
	& m
	&&\text{if $\iota=ii$ and $m>0$,}
	\\
	& |z|
	&&\text{if $\iota=iii$, or $\iota=i$ and $m=0$,}
	\\
	& 0
	&&\text{if $\iota=iv$, or $\iota=ii$ and $m=0$.}
	\end{aligned}
	\right.
\end{equation*}
By H\"older's inequality,
\begin{equation*}
\begin{split}
\n{\A (\D_m -z)^{-1} \B^* \phi}_{L^2}
\le
\varkappa
\n{a}_{L^{\frac{2q}{q-2}}}
\n{b}_{L^{\frac{2p}{2-p}}} 
\n{R_0(k^2)}_{L^p \to L^q}
\n{\phi}_{L^2}
\end{split}
\end{equation*}
and so, recalling that $|a|=|b|=|v|^{1/2}$, setting $q=p'$ and $1/r=1/p-1/q$, we get
\begin{equation*}
	\n{\A (\D_m -z)^{-1} \B^* \phi}_{L^2}
	\le
	\varkappa
	\n{v}_{L^r}
	\n{R_0(k^2)}_{L^p \to L^q}
	\n{\phi}_{L^2}.
\end{equation*}
Similarly one infers also
\begin{align*}
\n{\A (\D_m -z)^{-1} \B^*}_{L^2 \to L^2}
&\le 
\varkappa
\n{v}_{L_\rho^{r} L_\theta^\infty}
\n{R_0(k^2)}_{L_\rho^p L_\theta^2 \to L_\rho^q L_\theta^2}
\\
\n{\A (\D_m -z)^{-1} \B^*}_{L^2 \to L^2}
&\le 
\varkappa
\n{v}_{L_\rho^{r,1} L_\theta^\infty}
\n{R_0(k^2)}_{L_\rho^{p,1} L_\theta^2 \to L_\rho^{q,\infty} L_\theta^2}.
\end{align*}
%

From Lemmata\til\ref{resest-T1},\til\ref{resest-T3} and\til\ref{resest-T2} on the conjugate line (hence on the segments $[A_0,B_0]$, $(B_0,C]$ and $(C,E_0]$ respectively), if $n=1$ we get
\begin{align}\label{eq:bs-est-1}
\n{\A (\D_{m} -z)^{-1} \B^*}_{L^2 \to L^2}
\le
\frac{\varkappa}{2} |k|^{-1}
\n{v}_{L^{1}}
\end{align}
whereas, if $n\ge2$, we have
\begin{align}
\label{eq:bs-est}
\n{\A (\D_m -z)^{-1} \B^*}_{L^2 \to L^2}
&\lesssim
\varkappa |k|^{-2+\frac{n}{r}}
\left\{
\begin{aligned}
&
\n{v}_{L^{r}}
&&\text{if $r\in\left[\frac{n}{2},\frac{n+1}{2}\right]$ and $r>1$,}
\\
&
\n{v}_{L_\rho^{r} L_\theta^\infty}
&&\text{if $r\in\left(\frac{n+1}{2},n\right)$,}
\\
&
\n{v}_{L_\rho^{n,1} L_\theta^\infty}
&&\text{if $r=n$,}
\end{aligned}
\right.
\\
\label{eq:bs-est-dist}
\n{\A (\D_m -z)^{-1} \B^*}_{L^2 \to L^2}
&\lesssim
\varkappa 
\frac{|k|^{-\frac{1}{r}}}{\dist(k^2,[0,\infty))^{1-\frac{n+1}{2r}}}
\n{v}_{L^{r}}
\quad
\text{if $r\in\left(\frac{n+1}{2}, \infty \right]$.}
\end{align}
%

In short, we have found inequalities of the type
\begin{equation*}
	\n{\A (\D_m -z)^{-1} \B^*}_{L^2 \to L^2}
	\le C
	\kappa(z) \n{v}
\end{equation*}
for a suitable norm $\n{\cdot}$ of $v$, a positive constant $C$ independent on $z$ and where the function $\kappa$ is either
\begin{equation*}
	\kappa(z) = \varkappa(z) |k(z)|^{-2+\frac{n}{r}}
	\quad\text{or}\quad
	\kappa(z) = \varkappa(z) \frac{|k(z)|^{-1/r}}{\dist(z^2-m^2,[0,\infty))^{1-\frac{n+1}{2r}}}.
\end{equation*} 
Applying the Birman-Schwinger principle and proving our results is now straightforward and easy, maybe just a bit dazzling due to the fauna of cases. According to the hypothesis assumed in the statements of each of our theorems, observe that the region $\mathcal{S}$ described by 
$$\mathcal{S}=\{z\in\C \colon 1 \le C \kappa(z) \n{v} \}$$
in any case covers all the region $\rho(\D_m) = \C\setminus\{\zeta\in\R \colon |\zeta|\ge m\}$. Ergo we can always fix a complex number $z_0 \in \rho(\D_m)$ outside $\mathcal{S}$ satisfying $C K(z_0) \n{v} < 1$, namely Assumption\til\ref{ass2'} is verified (e.g. one can take $z_0=iy_0$, for $y_0 \in \R$ sufficiently large). By Lemma\til\ref{lem:BS} we can deduce that the point spectrum of the perturbed operator $\D_{m,\V}$ is confined in $\mathcal{S}$. If in particular $\kappa(z)$ is a nonnegative constant smaller than $1$ (even $0$, in which case the Birman-Schwinger operator is identically zero), we can exploit Lemma\til\ref{lem:BS2} obtaining that $\sigma(\D_{m,\V})=\sigma_c(\D_{m,\V})=\sigma_c(\D_{m})=(-\infty,-m]\cup[m,\infty)$ and in particular $\sigma_p(\D_{m,\V})=\varnothing$.

In the case of RA(ii) with $m>0$, we have
$\varkappa \equiv 1$ and it is immediate, from the Birman-Schwinger principle and all the above estimates for $\A (\D_{m} -z)^{-1} \B^*$, to conclude the proofs for Theorems\til\ref{thm:RAii-n1}, \ref{thm:RAii} and \ref{thm:RAii-dist}. When we consider RA(ii) with $m=0$ or instead RA(iv), then $\varkappa\equiv0$ and hence the Birman-Schwinger operator is identically zero, implying the stability of the spectrum stated in Theorem\til\ref{thm:RAii-iv}.

Now consider the case of RA(i) and $m>0$. Therefore $\varkappa(z) = |k(z)| \left\lvert \frac{z+m}{z-m} \right\rvert^{\sgn\Re z/2}$ and hence
$\kappa(z)$ is either of the form
\begin{equation}\label{eq:K1}
\kappa(z) = |k(z)|^{-1+\frac{n}{r}}
\left\lvert \frac{z+m}{z-m} \right\rvert^{\sgn\Re z/2}
\end{equation}
or of the form
\begin{equation}\label{eq:K2}
\kappa(z) =  \frac{|k(z)|^{1-1/r}}{\dist(z^2-m^2,[0,\infty))^{1-\frac{n+1}{2r}}} 
\left\lvert \frac{z+m}{z-m} \right\rvert^{\sgn\Re z/2}
\end{equation} 
We are interested in localizing the eigenvalues in compact regions, or at least in neighborhood $\mathcal{N}$ of the continuous spectrum of $\D_{m}$ such that $\mathcal{N} \cap \{z\in\C \colon \Re z = x_0 \}$ is compact for any fixed $x_0\in\R$. At this aim one should ask that $\kappa(z)$ is uniformly bounded as $|z|\to \infty$ in the first case, and that $K(x_0+\Im z)$ is uniformly bounded as $|\Im z| \to \infty$ in the second case.
It is easy to check that if $\kappa(z)$ is like in \eqref{eq:K1}, then
\begin{equation*}
	\kappa(z) \sim |z|^{-1+n/r}
	\quad\text{as $|z|\to\infty$}
\end{equation*}
whereas if $\kappa(z)$ is like in \eqref{eq:K2}, then
\begin{equation*}
	\kappa(\Re z+i\Im z) \sim
	|\Im z|^{-1+\frac{n}{r}} 
	\quad\text{as $|\Im z|\to\infty$}
\end{equation*}
for fixed $\Re z \in\R$. 
%
%
In both cases, we should ask $r \ge n$ to get an interesting (in the sense specified above) localization for the eigenvalues.
The same argument holds in the case of RA(i) and $m=0$, or in the case of RA(iii), namely when $\varkappa(z)=|z|$.
For this reason, to get Theorems\til\ref{thm:RAiii}--\ref{thm:RAi-dist} we only employ the estimates \eqref{eq:bs-est-1}, \eqref{eq:bs-est} for $r=n$ and \eqref{eq:bs-est-dist} for $\gamma:=r-n/2 \ge n/2$.

In particular, \eqref{eq:bs-est-1} and \eqref{eq:bs-est} for $r=n$ imply Theorem\til\ref{thm:RAi} when $\varkappa(z) = |k(z)| \left\lvert \frac{z+m}{z-m} \right\rvert^{\sgn\Re z/2}$, taking in account that
$\frac{C_0}{\n{v}} \le \left\lvert \frac{z+m}{z-m} \right\rvert^{\sgn\Re z/2}$ is equivalent to
\begin{equation*}
	\left(|\Re z| - m \frac{C_0^4+\n{v}^4}{C_0^4-\n{v}^4}\right)^2 + (\Im z)^2 
	\le
	\left(m\frac{2C_0^2 \n{v}^2}{C_0^4-\n{v}^4}\right)^2
\end{equation*}
if $\n{v}< C_0$. In the same case, \eqref{eq:bs-est-dist} implies Theorem\til\ref{thm:RAi-dist}.
When instead $\varkappa(z)=|z|$ and $m=0$, noting that $\varkappa |k|^{-1} \equiv 1$, thanks to \eqref{eq:bs-est-1} and \eqref{eq:bs-est} for $r=n$ we can prove the massless cases in Theorems\til\ref{thm:RAiii} and\til\ref{thm:RAi}. The last inequalities are used to prove also Theorem\til\ref{thm:RAiii}, in the case of RA(iii) and $m>0$. Finally, Theorem\til\ref{thm:RAiii-dist} is proved exploiting \eqref{eq:bs-est-dist} in the case $\varkappa=|z|$.
We conclude noting that in Theorem\til\ref{thm:RAi-dist} and\til\ref{thm:RAiii-dist}, when $\gamma=n/2$, the additional hypothesis $\n{v}_{L^{\gamma+n^2}}^{\gamma+n^2} < C_0$ is necessary, since in this case $K(x_0 + i\Im z) \sim 1$ as $|\Im z|\to \infty$ for fixed $x_0 \in \R$. Hence, if the norm of the potential is not small enough, the condition $\mathcal{N} \cap \{z\in\C \colon \Re z = x_0 \}$ compact would not be satisfied.

	Last but not least, we sketch the proof of Theorem\til\ref{thm:RAi-general}, which is not so different from that of Theorem\til\ref{thm:RAi}. Here we need to use the usual polar decomposition $\V=\mathcal{U}\mathcal{W}=\B^* \A$ with $\A=\sqrt{\mathcal{W}}$ and $\B=\sqrt{\mathcal{W}}\mathcal{U}^*$, described in Section\til\ref{sec:main}. Employing Corollary\til\ref{cor:radial-dirac}, by H\"older's inequality we immediately obtain	
	\begin{equation*}
		\n{\A (\D_{m}-z)^{-1} \B^* \phi}_{L^{2}} 
		\le K 
		\n{\V}_{L_\rho^{n,1} L_\theta^\infty}
		\left[ 1 + \left\lvert \frac{z+m}{z-m} \right\rvert^{\frac{\sgn\Re z}{2}} \right]
		\n{\phi}_{L^{2}}.
	\end{equation*}
	Assumptions I and II are verified as above, and note that in the massive case the inequality 
	\begin{equation*}
		1
		\le K 
		\n{\V}_{L_\rho^{n,1} L_\theta^\infty}
		\left[ 1 + \left\lvert \frac{z+m}{z-m} \right\rvert^{\frac{\sgn\Re z}{2}} \right]
	\end{equation*}
	describes the two disks in the statement of Theorem\til\ref{thm:RAi-general}, letting $C_0=\frac{1}{2K}$. Another application of the Birman-Schwinger principle concludes the proof.


\section{Game of matrices}\label{sec:potential}

The present section is fully dedicated to computations with matrices in order to exhibit some explicit examples of potentials $\V=vV$ such that the matricial part $V$ can be factorized in the product of two matrices $B^*$ and $A$ satisfying the various assumptions stated in Section\til\ref{sec:main}. 

We will prove that in some low dimensions it is not possible to find the required potential, more precisely they do not exists in dimension $n=2,4$ in the case of RA(ii) and RA(iii), and in dimension $n=1,2$ in the case of RA(iv). In all the other case, we will exhibit at least a couple of examples. There is no intent here to be exhaustive in finding the suitable matrices, but rather we want to suggest an idea to build them. At this aim we firstly need to show an explicit representation for the Dirac matrices, and then we need to introduce some special \lq\lq brick'' matrices.

\subsection{The Dirac matrices}

First of all, as anticipated in Remark\til\ref{rem:dirac}, let us explicitly define the Dirac matrices we are going to employ in our calculations, or better, one of their possible representations. At this aim we rely on the recursive construction performed by Kalf and Yamada in the Appendix of \cite{KalfYamada01}.

Let us introduce the Pauli matrices
\begin{equation*}
	\sigma_1 :=
	\begin{pmatrix}
	0 & 1 
	\\
	1 & 0
	\end{pmatrix}
	,
	\quad
	\sigma_2 :=
	\begin{pmatrix}
	0 & -i
	\\
	i & 0
	\end{pmatrix}
	,
	\quad
	\sigma_3 :=
	\begin{pmatrix}
	1 & 0
	\\
	0 & -1
	\end{pmatrix}.
\end{equation*}
Moreover, let us define the Kronecker product between two matrices $A=(a_{ij}) \in \C^{r_1\times c_1}$and $B=(b_{ij}) \in \C^{r_2 \times c_2}$, with $r_1,c_1,r_2,c_2\in\N$, as follows:
\begin{equation*}
	A \tensor B :=
	\begin{pmatrix}
	a_{11} B & \cdots & a_{1n} B
	\\
	\vdots & \ddots & \vdots
	\\
	a_{n1} B & \cdots & a_{nn} B
	\end{pmatrix}
	\in\C^{r_1 r_2 \times c_1 c_2}.
\end{equation*}
Recall that the Kronecker product satisfies, among others, the associative property and the mixed-product property, viz.
\begin{gather*}
	A_1 \tensor (A_2 \tensor A_3) = (A_1 \tensor A_2) \tensor A_3 = A_1 \tensor A_2 \tensor A_3
	\\
	(A_1 \tensor B_1)(A_2 \tensor B_2) = (A_1 A_2) \tensor (B_1 B_2).
\end{gather*}

The Dirac matrices in low dimensions can be chosen to be the Pauli matrices, namely for $n=1$ we set
\begin{equation*}
	\alpha_1^{(1)} := \sigma_1,
	\quad
	\alpha_{2}^{(1)} := \sigma_3,
\end{equation*}
and for $n=2$
\begin{equation*}
\alpha_1^{(2)} := \sigma_1,
\quad
\alpha_2^{(2)} := \sigma_2,
\quad
\alpha_{3}^{(2)} := \sigma_3.
\end{equation*}
The apex $(n)$ stands for the dimension; we will omit it when there is no possibility of confusion.
Let us start the recursion, after recalling that we defined $N=2^{\lceil n/2 \rceil}$: 
\begin{enumerate}[label=(\roman*)]
	\item 
	if $n\ge3$ is odd, we use the matrices $\alpha_1^{(n-1)}, \dots, \alpha_{n+1}^{(n-1)}$ known from the dimension $n-1$ to construct
	\begin{equation*}\label{rec-oddDmat}
	\alpha_k^{(n)} :=
	\sigma_1 \tensor \alpha_k^{(n-1)}
	,\qquad
	\alpha_{n+1}^{(n)} :=
	\sigma_3 \tensor I_{N/2}
	\end{equation*}
	%
	%
	for $k\in\{1,\dots,n\}$;
	
	\item if the dimension $n\ge4$ is even, we define
	%
	%
	\begin{equation*}\label{rec-evenDmat}
	\alpha_1^{(n)} :=
	\sigma_1 \tensor I_{N/2}
	,\qquad
	\alpha_{k+1}^{(n)} :=
	\sigma_2 \tensor \alpha_k^{(n-2)}
	,\qquad
	\alpha_{n+1}^{(n)} :=
	\sigma_3 \tensor I_{N/2}
	\end{equation*}
	for $k\in\{1,\dots,n-1\}$.	
\end{enumerate}
In any dimension $n\ge1$, the Dirac matrices $\alpha_1,\dots,\alpha_{n+1}$ just defined are Hermitian, satisfy \eqref{clifford} and have the structure
\begin{equation*}\label{eq:beta}
\alpha_k =
	\begin{pmatrix}
	{0} & \beta_k
	\\
	\beta_k^* & {0}
	\end{pmatrix}
	,
	\quad
\alpha_{n+1} =
	\begin{pmatrix}
	I_{N/2} & {0}
	\\
	{0} & -I_{N/2}
	\end{pmatrix}
\end{equation*}
for $k \in\{1,\dots,n\}$, where the matrices $\beta_k \in \C^{N/2 \times N/2}$ satisfy
\begin{equation*}
	\beta_k \beta_j^* + \beta_j \beta_k^* = 2 \delta_k^j I_{N/2}
\end{equation*}
and are Hermitian if $n$ is odd.

\begin{remark}
	Not only in dimension $n=1,2$, but also in dimension $n=3$, the above representation for the Dirac matrices coincides with the classical one:
	\begin{align*}
		\alpha_1^{(3)} = \sigma_1 \tensor \sigma_1
		&=
		\begin{pmatrix}
		0 & 0 & 0 & 1
		\\
		0 & 0 & 1 & 0
		\\
		0 & 1 & 0 & 0
		\\
		1 & 0 & 0 & 0
		\end{pmatrix},
		&
		\alpha_2^{(3)} = \sigma_1 \tensor \sigma_2
		&=
		\begin{pmatrix}
		0 & 0 & 0 & -i
		\\
		0 & 0 & i & 0
		\\
		0 & -i & 0 & 0
		\\
		i & 0 & 0 & 0
		\end{pmatrix},
		\\
		\alpha_3^{(3)} = \sigma_1 \tensor \sigma_3
		&=
		\begin{pmatrix}
		0 & 0 & 1 & 0
		\\
		0 & 0 & 0 & -1
		\\
		1 & 0 & 0 & 0
		\\
		0 & -1 & 0 & 0
		\end{pmatrix},
		&
		\alpha_4^{(3)} = \sigma_3 \tensor I_2
		&=
		\begin{pmatrix}
		1 & 0 & 0 & 0
		\\
		0 & 1 & 0 & 0
		\\
		0 & 0 & -1 & 0
		\\
		0 & 0 & 0 & -1
		\end{pmatrix}.
	\end{align*}
\end{remark}

\begin{remark}\label{rem:not-restrictive}
	If $\{ \alpha_1, \dots, \alpha_{n+1}\}$ and $\{ \widetilde{\alpha}_1, \dots, \widetilde{\alpha}_{n+1}\}$ are a pair of sets of Dirac matrices, then there exists a unitary matrix $U \in \C^{N\times N}$ such that $\widetilde{\alpha}_k = U \alpha_k U^{-1}$ or $\widetilde{\alpha}_k = - U \alpha_k U^{-1}$, for $k\in\{1,\dots,n+1\}$. If $n$ is odd we always fall in the first case; if $n$ is even and we are in the second case,
	set $\widetilde{U}=U \prod_{k=1}^{n} \alpha_k$, then
	\begin{equation*}
	\widetilde{\D}_{m} = -i \sum_{j=1}^{n} \widetilde{\alpha}_k \partial_k + m \widetilde{\alpha}_{n+1} 
	= \widetilde{U}
	\left[-i \sum_{j=1}^{n} {\alpha}_k \partial_k - m {\alpha}_{n+1}\right] \widetilde{U}^{-1}
	=
	\widetilde{U}
	{\D}_{-m} 
	\widetilde{U}^{-1}
	.
	\end{equation*}
	Therefore, considering the perturbed operator $\widetilde{\D}_{m,\widetilde{\V}}$, in odd dimension it is unitarily equivalent to ${\D}_{m,\V}$ with $\widetilde{\V}= \widetilde{U} {\V} \widetilde{U}^{-1}$, whereas in even dimension it is unitarily equivalent to either $\D_{m,\V}$ or $\D_{-m,\V}$. 
	
	In the case of the current paper, noting that all the results in Section\til\ref{sec:main} are symmetric respect to the imaginary axis (namely they are not effected replacing $m$ with $-m$ in the definition of the Dirac operator), it becomes evident that the choice of a particular representation for the Dirac matrices is not restrictive at all. 
\end{remark}


The above recursive definition for the matrices may appear too much implicit, but we can go further exploding the representation. Let us define the \lq\lq Kronecker exponentiation''
\begin{align*}
	M^{\tensor 0} &= 1
	\\
	M^{\tensor k} &= \underbrace{M \tensor \cdots \tensor M}_\text{$k$ times}
\end{align*}
for any complex matrix $M$ and for any $k\in\N$, imposing the natural identification between $1\in\C$ and the matrix $(1) \in \C^{1\times1}$.
Therefore, one can explicitly write the Dirac matrices in even dimension $n\ge2$ as 
\begin{equation}\label{evenDmat}
	\alpha_k =
	\left\{
		\begin{aligned}
			&\sigma_2^{\tensor k-1} \tensor \sigma_1 \tensor I_{2}^{\tensor n/2-k}
			&&\text{for $k\in\left\{1,\dots,\frac{n}{2}\right\}$}
			\\
			&\sigma_2^{\tensor n/2} 
			&&\text{for $k=\frac{n}{2}+1$}
			\\
			&\sigma_2^{\tensor n+1-k} \tensor \sigma_3 \tensor I_{2}^{\tensor k-n/2-2}
			&&\text{for $k\in\left\{\frac{n}{2}+2,\dots,n+1\right\}$}
		\end{aligned}
	\right.
\end{equation}
and in odd dimension $n\ge3$ as
\begin{equation*}\label{oddDmat}
	\alpha_k =
	\left\{
	\begin{aligned}
	&\sigma_1\tensor\sigma_2^{\tensor k-1} \tensor \sigma_1 \tensor I_{2}^{\tensor (n-1)/2-k}
	&&\text{for $k\in\left\{1,\dots,\frac{n-1}{2}\right\}$}
	\\
	&\sigma_1\tensor\sigma_2^{\tensor (n-1)/2} 
	&&\text{for $k=\frac{n-1}{2}+1$}
	\\
	&\sigma_1\tensor\sigma_2^{\tensor n-k} \tensor \sigma_3 \tensor I_{2}^{\tensor k-(n-1)/2-2}
	&&\text{for $k\in\left\{\frac{n-1}{2}+2,\dots,n\right\}$}
	\\
	&\sigma_3\tensor I_{2^{}}^{\tensor (n-1)/2}
	&&\text{for $k=n+1$.}
	\end{aligned}
	\right.
\end{equation*}
The odd dimensional case follow easily from the recursive definition and from the explicit definition \eqref{evenDmat} of the Dirac matrices in the even dimensional case; the latter can be easily verified by induction, and we omit the proof.

For later use, we collect in the following lemma a recursive formula which connects the Dirac matrices associated to two different dimensions.

\begin{lemma}\label{lemma:recursive_dirac}
	Let $n,m\in\N$ such that $2 \le m \le n$ and $n-m$ is even. Thus the following identity hold:
	\begin{equation*}
		\alpha_k^{(n)}
		=
		\left\{
		\begin{aligned}
		&\alpha_k^{(m)} \tensor I_2^{\tensor (n-m)/2}
		&&\text{for $k\in\left\{ 1,\dots, \left\lfloor\frac{m}{2}\right\rfloor \right\}$}
		\\
		&\alpha_{\left\lfloor m/2 \right\rfloor+1}^{(m)} \tensor \alpha_{k- \left\lfloor m/2 \right\rfloor}^{(n-m)}
		&&\text{for $k\in\left\{ \left\lfloor\frac{m}{2}\right\rfloor+1,\dots, n-m+\left\lfloor \frac{m}{2} \right\rfloor +1 \right\}$}
		\\
		&\alpha_{k-(n-m)}^{(m)} \tensor I_2^{\tensor (n-m)/2}
		&&\text{for $k\in\left\{ n-m+\left\lfloor \frac{m}{2} \right\rfloor +2,\dots, n+1 \right\}$}
		\end{aligned}
		\right.
	\end{equation*}
	where $\lfloor\cdot\rfloor$is the floor function.
\end{lemma}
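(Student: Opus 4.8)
The plan is to reduce everything to the explicit representation \eqref{evenDmat} of the Dirac matrices in even dimension. Since $n-m$ is even, $n$ and $m$ have the same parity, so there are two cases to treat; throughout I adopt the conventions $I_2^{\otimes 0}=1$ and $\alpha_1^{(0)}=1$, so that the degenerate instances (in particular $n=m$, where the asserted identity is tautological) are subsumed uniformly.

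\emph{Both dimensions even.} Here $\lfloor m/2\rfloor=m/2$, and one substitutes \eqref{evenDmat} into the two sides of each of the three $k$-ranges. For $1\le k\le m/2$ the claim $\alpha_k^{(n)}=\alpha_k^{(m)}\otimes I_2^{\otimes(n-m)/2}$ collapses to the exponent identity $n/2-k=(m/2-k)+(n-m)/2$. For the middle range I put $j:=k-m/2\in\{1,\dots,n-m+1\}$, use $\alpha_{m/2+1}^{(m)}=\sigma_2^{\otimes m/2}$, and compare $\sigma_2^{\otimes m/2}\otimes\alpha_j^{(n-m)}$ with $\alpha_k^{(n)}$, splitting according to whether $j\le(n-m)/2$, $j=(n-m)/2+1$, or $j\ge(n-m)/2+2$; in each subcase the identity again reduces to matching the powers of $\sigma_2$ and of $I_2$ (for instance $m/2+(j-1)=k-1$ and $(n-m)/2-j=n/2-k$ in the first subcase). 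For the third range I put $i:=k-(n-m)\in\{m/2+2,\dots,m+1\}$ and verify that $\alpha_i^{(m)}\otimes I_2^{\otimes(n-m)/2}$ equals $\alpha_k^{(n)}$ via $m+1-i=n+1-k$ and $(i-m/2-2)+(n-m)/2=k-n/2-2$. One should also check that the three ranges on the right-hand side abut exactly as those on the left, which is where the index shifts have to be tracked carefully.

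\emph{Both dimensions odd.} Then $m\ge3$, the dimensions $n-1$ and $m-1$ are even with $(n-1)-(m-1)=n-m$, and the even case just established applies to the pair $(n-1,m-1)$. For $1\le k\le n$ the recursive definition gives $\alpha_k^{(n)}=\sigma_1\otimes\alpha_k^{(n-1)}$; inserting the formula for $\alpha_k^{(n-1)}$ and absorbing the leading $\sigma_1$ via $\sigma_1\otimes\alpha_j^{(m-1)}=\alpha_j^{(m)}$ (valid for $1\le j\le m$) yields precisely the three cases of the lemma restricted to $k\le n$, after noting $\lfloor m/2\rfloor=(m-1)/2=\lfloor(m-1)/2\rfloor$. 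The single remaining index $k=n+1$ is handled directly: $\alpha_{n+1}^{(n)}=\sigma_3\otimes I_{N/2}=\sigma_3\otimes I_2^{\otimes(n-1)/2}$, whereas the third-case right-hand side is $\alpha_{m+1}^{(m)}\otimes I_2^{\otimes(n-m)/2}=(\sigma_3\otimes I_2^{\otimes(m-1)/2})\otimes I_2^{\otimes(n-m)/2}$, and these agree because $(m-1)/2+(n-m)/2=(n-1)/2$.

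I do not anticipate any genuine obstacle: the lemma is pure bookkeeping, and the only thing requiring attention is the arithmetic of the tensor-power exponents together with the several index substitutions ($k\mapsto k-\lfloor m/2\rfloor$, $k\mapsto k-(n-m)$, and $k\mapsto k$ in the odd-to-even reduction), in particular making sure that the boundaries of the three ranges line up on both sides and that the degenerate subcases ($j=(n-m)/2+1$ in the middle range, $k=n+1$ in the odd case, $n=m$) are consistent with the conventions $I_2^{\otimes 0}=1$ and $\alpha_1^{(0)}=1$.
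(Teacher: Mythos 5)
Your proposal is correct and follows essentially the same route as the paper: the even case is verified by substituting the explicit representation \eqref{evenDmat} and matching the $\sigma_2$ and $I_2$ tensor-power exponents across the same five index ranges (via the shifts $j=k-m/2$ and $k-(n-m)$), and the odd case is reduced to the even one through $\alpha_k^{(n)}=\sigma_1\otimes\alpha_k^{(n-1)}$, absorbing the leading $\sigma_1$ into $\alpha_k^{(m)}$ and treating $k=n+1$ separately. The only difference is your explicit handling of the degenerate case $n=m$ via the conventions $I_2^{\otimes 0}=1$, $\alpha_1^{(0)}=1$, which the paper leaves implicit.
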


\begin{proof}
If $n,m$ are both even, we want to prove
\begin{equation}\label{even-recursive2}
	\alpha_k^{(n)}
	=
	\left\{
		\begin{aligned}
			&\alpha_k^{(m)} \tensor I_2^{\tensor (n-m)/2}
			&&\text{for $k\in\left\{ 1,\dots, \frac{m}{2} \right\}$}
			\\
			&\alpha_{m/2+1}^{(m)} \tensor \alpha_{k-m/2}^{(n-m)}
			&&\text{for $k\in\left\{ \frac{m}{2}+1,\dots, n-\frac{m}{2}+1 \right\}$}
			\\
			&\alpha_{k-(n-m)}^{(m)} \tensor I_2^{\tensor (n-m)/2}
			&&\text{for $k\in\left\{ n-\frac{m}{2}+2,\dots, n+1 \right\}$.}
		\end{aligned}
	\right.
\end{equation}
But, from \eqref{evenDmat} and setting for simplicity $j:=k-m/2$, $h:=k-(n-m)$ for any $k\in\N$, we immediately have
\begin{equation*}
	\alpha_k^{(n)} =
	\left\{
	\begin{aligned}
		&\sigma_2^{\tensor k-1} \tensor \sigma_1 \tensor I_2^{\tensor m/2-k} \tensor I_2^{\tensor (n-m)/2}
		&&\text{for $k\in\{1,\dots,m/2\}$}
		\\
		&\sigma_2^{\tensor m/2} \tensor \sigma_2^{j-1} \tensor \sigma_1 \tensor I_2^{\tensor (n-m)/2-j}
		&&\text{for $k\in\{m/2+1,\dots,n/2\}$}
		\\
		& \sigma_2^{\tensor m/2} \tensor \sigma_2^{\tensor (n-m)/2}
		&&\text{for $k=n/2+1$}
		\\
		&\sigma_2^{\tensor m/2} \tensor \sigma_2^{n-m+1-j} \tensor \sigma_3 \tensor I_2^{\tensor j-(n-m)/2-2}
		&&\text{for $k\in\{n/2+2,\dots,n-m/2+1\}$}
		\\
		&\sigma_2^{\tensor m+1-h} \tensor \sigma_3 \tensor I_2^{\tensor h-m/2-2} \tensor I_2^{\tensor (n-m)/2}
		&&\text{for $k\in\{n-m/2+2,\dots,n+1\}$}
	\end{aligned}
	\right.
\end{equation*}
from which our assertion is evident. 

If $n,m$ are both odd, exploiting \eqref{even-recursive2} it follows that 
\begin{equation*}\label{odd-recursive2}
\begin{split}
	\alpha_k^{(n)}
	&=
	%
	\left\{
		\begin{aligned}
		&\sigma_1 \tensor \alpha_k^{(n-1)}
		&&\text{for $k\in\{1,\dots,n\}$}
		\\
		&\sigma_3 \tensor I_2^{\tensor n/2-1}
		&&\text{for $k=n+1$}
		\end{aligned}
	\right.
	\\
	&=
	\left\{
	\begin{aligned}
		&\sigma_1 \tensor\alpha_k^{(m-1)} \tensor I_2^{\tensor (n-m)/2}
		&&\text{for $k\in\left\{ 1,\dots, \frac{m-1}{2} \right\}$}
		\\
		&\sigma_1 \tensor\alpha_{(m-1)/2+1}^{(m-1)} \tensor \alpha_{k-(m-1)/2}^{(n-m)}
		&&\text{for $k\in\left\{ \frac{m-1}{2}+1,\dots, n-\frac{m-1}{2} \right\}$}
		\\
		&\sigma_1 \tensor\alpha_{k-(n-m)}^{(m-1)} \tensor I_2^{\tensor (n-m)/2}
		&&\text{for $k\in\left\{ n-\frac{m-1}{2}+1,\dots, n \right\}$}
		\\
		&\sigma_3 \tensor I_2^{\tensor n/2-1}
		&&\text{for $k=n+1$}
	\end{aligned}
	\right.
	\\
	&=
	%
	\left\{
	\begin{aligned}
		&\alpha_k^{(m)} \tensor I_2^{\tensor (n-m)/2}
		&&\text{for $k\in\left\{ 1,\dots, \frac{m-1}{2} \right\}$}
		\\
		&\alpha_{(m-1)/2+1}^{(m)} \tensor \alpha_{k-(m-1)/2}^{(n-m)}
		&&\text{for $k\in\left\{ \frac{m-1}{2}+1,\dots, n-\frac{m-1}{2} \right\}$}
		\\
		&\alpha_{k-(n-m)}^{(m)} \tensor I_2^{\tensor (n-m)/2}
		&&\text{for $k\in\left\{ n-\frac{m-1}{2}+1,\dots, n+1 \right\}$}
	\end{aligned}
	\right.
\end{split}
\end{equation*}
which concludes the proof.
\end{proof}

To conclude this subsection on the Dirac matrices, it seems interesting to use noting the following relation about their product, even if we are not going to exploit it.

\begin{lemma}
We have that
\begin{equation}\label{def:atilde}
\widetilde{\alpha}
:= 
(-i)^{\lfloor\frac{n}{2}\rfloor}\prod_{k=1}^{n+1}\alpha_k
=
\begin{cases}
-i\sigma_2 \tensor I_{N/2} &\text{if $n$ is odd,}
\\
I_{N} &\text{if $n$ is even,}
\end{cases}
\end{equation}
and in particular
\begin{align*}
	\widetilde{\alpha}^2 = (-1)^n I_N,
	\qquad
	\widetilde{\alpha}^* = (-1)^n \widetilde{\alpha},
	\qquad
	\alpha_k \widetilde{\alpha} = (-1)^{n}  \widetilde{\alpha} \alpha_k
\end{align*}
for $k\in\{1,\dots,n+1\}$.
\end{lemma}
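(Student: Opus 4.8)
The plan is to establish the explicit form of $\widetilde{\alpha}$ by induction on the dimension $n$, using the recursive construction of the Dirac matrices together with the mixed-product property of the Kronecker product, and then to read off the three relations. For the base cases one computes directly: when $n=1$, $\widetilde{\alpha}=\sigma_1\sigma_3=-i\sigma_2$, consistent with the formula since $\lfloor 1/2\rfloor=0$ and $N/2=1$; when $n=2$, $\sigma_1\sigma_2\sigma_3=i\sigma_3^2=iI_2$, so $\widetilde{\alpha}=(-i)^1\cdot iI_2=I_2=I_N$.

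For the inductive step I treat the two parities separately, reducing an odd dimension $n\ge 3$ to $n-1$ and an even dimension $n\ge 4$ to $n-2$ (in both cases the smaller dimension is even, so the induction is well founded once the two base cases are in place, and the blocks $I_{N/2}$ have exactly the size of the identity in the lower dimension). In the odd case, from $\alpha_k^{(n)}=\sigma_1\tensor\alpha_k^{(n-1)}$ for $k\le n$ and $\alpha_{n+1}^{(n)}=\sigma_3\tensor I_{N/2}$, repeated use of $(A_1\tensor B_1)(A_2\tensor B_2)=(A_1A_2)\tensor(B_1B_2)$ gives
\[
\prod_{k=1}^{n+1}\alpha_k^{(n)}=\Bigl(\sigma_1^{\,n}\tensor\prod_{k=1}^{n}\alpha_k^{(n-1)}\Bigr)(\sigma_3\tensor I_{N/2})=\sigma_1\sigma_3\tensor\prod_{k=1}^{n}\alpha_k^{(n-1)},
\]
since $n$ is odd. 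By the inductive hypothesis in dimension $n-1$ (even), $\prod_{k=1}^{n}\alpha_k^{(n-1)}=i^{\lfloor(n-1)/2\rfloor}I_{N/2}$; combined with $\sigma_1\sigma_3=-i\sigma_2$, the identity $\lfloor n/2\rfloor=\lfloor(n-1)/2\rfloor$, and $(-i)\cdot i=1$, multiplying by $(-i)^{\lfloor n/2\rfloor}$ yields $\widetilde{\alpha}=-i\sigma_2\tensor I_{N/2}$. The even case is handled the same way: from $\alpha_1^{(n)}=\sigma_1\tensor I_{N/2}$, $\alpha_{k+1}^{(n)}=\sigma_2\tensor\alpha_k^{(n-2)}$, $\alpha_{n+1}^{(n)}=\sigma_3\tensor I_{N/2}$, the middle factors collapse to $\sigma_2^{\,n-1}\tensor\prod_{k=1}^{n-1}\alpha_k^{(n-2)}=\sigma_2\tensor i^{\lfloor(n-2)/2\rfloor}I_{N/2}$ (using the inductive hypothesis in the even dimension $n-2$ and that $n-1$ is odd), so $\prod_{k=1}^{n+1}\alpha_k^{(n)}=\sigma_1\sigma_2\sigma_3\tensor i^{\lfloor(n-2)/2\rfloor}I_{N/2}=i^{\,n/2}I_N$ and hence $\widetilde{\alpha}=(-i)^{n/2}i^{\,n/2}I_N=I_N$.

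With the explicit form in hand, the first two identities are immediate: for even $n$ they are trivial, and for odd $n$ they reduce to $(-i\sigma_2)^2=-I_2$ and $(-i\sigma_2)^*=i\sigma_2=-(-i\sigma_2)$, using that $\sigma_2$ is Hermitian with $\sigma_2^2=I_2$. For the commutation relation I would instead argue straight from the Clifford structure, not from the explicit form: each $\alpha_k$ with $k\in\{1,\dots,n+1\}$ satisfies $\alpha_k^2=I_N$ and anticommutes with every $\alpha_j$, $j\neq k$ — the relations $\alpha_k\alpha_{n+1}+\alpha_{n+1}\alpha_k=0$ for $k\le n$ being visible from the block forms $\alpha_k=\bigl(\begin{smallmatrix}0&\beta_k\\\beta_k^*&0\end{smallmatrix}\bigr)$ and $\alpha_{n+1}=\bigl(\begin{smallmatrix}I&0\\0&-I\end{smallmatrix}\bigr)$. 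Then, inserting $\alpha_k^2=I_N$ between consecutive factors of the product,
\[
\alpha_k\Bigl(\prod_{j=1}^{n+1}\alpha_j\Bigr)\alpha_k=\prod_{j=1}^{n+1}(\alpha_k\alpha_j\alpha_k)=(-1)^n\prod_{j=1}^{n+1}\alpha_j,
\]
because $\alpha_k\alpha_j\alpha_k=-\alpha_j$ for the $n$ indices $j\neq k$ and $\alpha_k\alpha_k\alpha_k=\alpha_k$ for the single index $j=k$; multiplying on the right by $\alpha_k$ and by the scalar $(-i)^{\lfloor n/2\rfloor}$ gives $\alpha_k\widetilde{\alpha}=(-1)^n\widetilde{\alpha}\alpha_k$.

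No step here is deep. The only point demanding care is the bookkeeping of the powers of $i$ and $-i$ together with the parities, so that the prefactor $(-i)^{\lfloor n/2\rfloor}$ exactly cancels the $i$'s accumulated through the recursion, and checking that the sizes of the identity blocks $I_{N/2}$ agree with the matrix dimension in the lower case to which the inductive hypothesis is applied.
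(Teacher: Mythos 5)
Your proof is correct and takes essentially the same route as the paper: the explicit formula is established by induction through the recursive Kronecker construction and the mixed-product property, with the base cases $n=1,2$ computed directly, the even case reduced to $n-2$ and the odd case to the even case $n-1$, exactly as in the paper's argument. The paper dismisses the three algebraic identities as obvious consequences of the anticommutation relations, which is precisely what your final paragraph spells out (your derivation of the commutation relation from $\alpha_k\alpha_j\alpha_k=-\alpha_j$ for $j\neq k$ is the intended argument), so there is nothing to add.
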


\begin{proof}
	The three properties follows obviously from the anticommutation relations \eqref{clifford}, so we need just to prove the second equality in \eqref{def:atilde}. Suppose firstly that $n$ is even. Then the identity follows by inductive argument.
	If $n=2$, it is directly verified that
	\begin{equation*}
	-i \alpha_1^{(2)} \alpha_2^{(2)} \alpha_3^{(2)} 
	= -i \sigma_1  \sigma_2  \sigma_3 		
	= I_2.
	\end{equation*}
	Fix now $n\ge4$ even and suppose that
	\begin{equation*}
	(-i)^{n/2-1} \prod_{k=1}^{n-1} \alpha_k^{(n-2)} = I_{N/2}.
	\end{equation*}
	Exploiting the definitions of the Dirac matrices and the mixed-product property of the Kronecker product, we get
	\begin{equation*}
	\begin{split}
	(-i)^{n/2} \prod_{k=1}^{n+1} \alpha_k^{(n)}
	&=
	(-i)^{n/2} 
	\left[ \sigma_1 \tensor I_{N/2} \right]
	\left[ \prod_{k=1}^{n-1} \sigma_2 \tensor \alpha_k^{(n-2)}\right]
	\left[ \sigma_3 \tensor I_{N/2} \right]
	\\
	&=
	(-i)^{n/2} 
	\left[ \sigma_1 \tensor I_{N/2} \right]
	\left[ \sigma_2^{n-1} \tensor \prod_{k=1}^{n-1}  \alpha_k^{(n-2)}\right]
	\left[ \sigma_3 \tensor I_{N/2} \right]
	\\
	&=
	-i 
	\left[ \sigma_1 \tensor I_{N/2} \right]
	\left[ \sigma_2 \tensor I_{N/2} \right]
	\left[ \sigma_3 \tensor I_{N/2} \right]
	\\
	&=
	-i \sigma_1  \sigma_2  \sigma_3  \tensor I_{N/2}
	\\
	&=
	I_N
	\end{split}
	\end{equation*}
	Finally, let $n\ge1$ be odd. If $n=1$, then it is trivially checked that $\alpha_1^{(1)}\alpha_2^{(1)}=\sigma_1\sigma_3=-i\sigma_2$. If $n\ge3$, then 
	\begin{equation*}
	(-i)^{\frac{n-1}{2}} \prod_{k=1}^n \alpha_k^{(n)} 
	=
	(-i)^{\frac{n-1}{2}} \prod_{k=1}^n \sigma_1 \tensor \alpha_k^{(n-1)}
	=
	(-i)^{\frac{n-1}{2}} \sigma_1^n \tensor \prod_{k=1}^n \alpha_k^{(n-1)}
	=
	\sigma_1 \tensor I_{N/2}
	\end{equation*}
	and hence
	\begin{equation*}
		(-i)^{\frac{n-1}{2}} \prod_{k=1}^{n+1} \alpha_k^{(n)} 
		=
		[\sigma_1 \tensor I_{N/2}][\sigma_3 \tensor I_{N/2}]
		=
		-i\sigma_2 \tensor I_{N/2}
	\end{equation*}
	concluding the proof of the identity.
\end{proof}


\subsection{The brick matrices}

Before to proceed with the construction of the examples for the potentials, we need to point our attention on some peculiar $2\times2$ matrices. 
We want to find $\rho^k, \tau^k \in \C^{2\times2}$ satisfying the conditions 
\begin{gather*}
	\rho^k\sigma_{1}(\tau^k)^* =0= \rho^k\sigma_{k}(\tau^k)^*
	\\
	\rho^k\sigma_{h}(\tau^k)^* 
	\neq 0 \neq
	(\tau^k)^*\rho^k
\end{gather*}
for fixed $k\in\{0,2,3\}$ and any $h\in\{0,2,3\}\setminus\{k\}$, where we define for simplicity $\sigma_0:=I_2$ 
Moreover, let us ask also $|\rho^k|=|\tau^k|=1$, where $|\cdot|$ is the matricial $2$-norm, a.k.a. the spectral norm.
It is quite simple to find a couple of such matrices for any $k\in\{0,2,3\}$, properly combining the Pauli matrices. 

In the case $k=0$, we can consider
\begin{equation*}
\rho^0_+ = \frac{ \sigma_2 + i \sigma_3}{2} = \tau^0_-,
\quad
\tau^0_+ = \frac{ I_2 + \sigma_1}{2} = \rho^0_-,
\end{equation*}
from which, using the anticommutation relations \eqref{clifford}, it easy to check
\begin{gather*}
\rho^0_\pm \sigma_0 (\tau_\pm^0)^* 
=0=
\rho^0_\pm \sigma_1 (\tau_\pm^0)^*
\\
\rho^0_\pm \sigma_2 (\tau_\pm^0)^*
= 
\frac{I_2 + \sigma_1}{2}
=
\mp i \rho^0_\pm \sigma_3 (\tau_\pm^0)^* 
\\
(\tau_\pm^0)^* \rho^0_\pm = \frac{\sigma_2 \pm  i\sigma_3}{2}.
\end{gather*}
In the case $k=2$, we can set
\begin{equation*}
	\rho^2_\pm = \frac{\sigma_1 \mp i \sigma_2}{2} =
	\tau^2_\pm,
\end{equation*}
and thus
\begin{gather*}
	\rho^2_\pm \sigma_1 (\tau_\pm^2)^* 
	=0=
	\rho^2_\pm \sigma_2 (\tau_\pm^2)^*
	\\
	\rho^2_\pm \sigma_0 (\tau_\pm^2)^*
	= 
	\frac{I_2 \mp \sigma_3}{2}
	=
	\pm \rho^2_\pm \sigma_3 (\tau_\pm^2)^* 
	\\
	(\tau_\pm^2)^* \rho^2_\pm = \frac{I_2 \pm \sigma_3}{2}.
\end{gather*}
Finally, in the case $k=3$, we can consider
\begin{equation*}
\rho^3_\pm = \frac{I_2 \pm \sigma_2}{2} = \tau^3_\pm
\end{equation*}
and hence
\begin{gather*}
	\rho^3_\pm \sigma_1 (\tau_\pm^3)^* 
	=0=
	\rho^3_\pm \sigma_3 (\tau_\pm^3)^*
	\\
	\rho^3_\pm \sigma_0 (\tau_\pm^3)^*
	= 
	\frac{I_2 \pm \sigma_2}{2}
	=
	\pm \rho^3_\pm \sigma_2 (\tau_\pm^3)^* 
	\\
	(\tau_\pm^3)^* \rho^3_\pm = \frac{I_2 \pm \sigma_2}{2}.
\end{gather*}

The couple of matrices we found for each of the three cases are not the only solutions satisfying the required set of conditions, but for our purposes are enough.

Now, we want to find matrices $A, B \in \C^{\N\times N}$ such that
\begin{gather*}
A \alpha_k B^* = 0
\\
V=B^*A \neq 0
\end{gather*}
for $k \in\{1,\dots,n\}$. In addition, we will also impose, or not, that $AB^*$ and $A\alpha_{n+1} B^*$ are null matrices.

\subsection{The odd-dimensional case}

Let us start with the $1$-dimensional case, for which we have basically already found the admissible matricial part for the potentials, thanks to the brick matrices found in the previous subsection.

In fact, to satisfy RA(i) we need to find $V=B^*A \neq 0$ such that $A\sigma_1 B^*=0$, $AB^*\neq0$, $A\sigma_3 B^*\neq0$, thus we can choose $A_\pm = \rho^2_\pm, B_\pm = \tau^2_\pm$, obtaining the couple of examples
\begin{align*}
	V_\pm
	=
	\frac{1}{2}
	\begin{pmatrix}
	1 \pm 1 & 0
	\\
	0 & 1 \mp 1
	\end{pmatrix}
	=
	\left[
	\frac{1}{2}
	\begin{pmatrix}
		0 & 1 \mp 1
		\\
		1 \pm 1 & 0 
	\end{pmatrix}
	\right]^*
	\left[
	\frac{1}{2}
	\begin{pmatrix}
		0 & 1 \mp 1
		\\
		1 \pm 1 & 0 
	\end{pmatrix}
	\right]
	=
	B_\pm^* A_\pm.
\end{align*}
Similarly we can proceed for the case of RA(ii) and RA(iii), in which case we use $\rho_\pm^0, \tau_\pm^0$ and $\rho_\pm^3, \tau_\pm^3$ respectively, viz. for RA(ii) we have the couple of examples 
\begin{align*}
V_\pm
&=
\frac{i}{2}
\begin{pmatrix}
\pm 1 & -1
\\
1 & \mp 1
\end{pmatrix}
=
\left[
\frac{1}{2}
\begin{pmatrix}
1 & \pm 1
\\
1 & \pm 1
\end{pmatrix}
\right]^*
\left[
\frac{i}{2}
\begin{pmatrix}
1 & \mp 1
\\
1 & \mp 1
\end{pmatrix}
\right]
=
B_\pm^* A_\pm,
\end{align*}
while for RA(iii) we have the couple of examples
\begin{align*}
	V_\pm
	&=
	\frac{1}{2}
	\begin{pmatrix}
	1 & \mp i
	\\
	\pm i & 1
	\end{pmatrix}
	=
	\left[
	\frac{1}{2}
	\begin{pmatrix}
	1 & \mp i
	\\
	\pm i & 1
	\end{pmatrix}
	\right]^*
	\left[
	\frac{i}{2}
	\begin{pmatrix}
	1 & \mp i
	\\
	\pm i & 1
	\end{pmatrix}
	\right]
	=
	B_\pm^* A_\pm.
\end{align*}

This examples can be easily generalized in any odd dimension, taking in account the following

\begin{lemma}\label{lem:rec-pot}
	If $V^{(n-2)} =[B^{(n-2)}]^* A^{(n-2)}$ is an admissible matrix in dimension $n-2$, then an admissible matrix in dimension $n$ is given by
	\begin{equation*}
	V^{(n)} := V^{(n-2)} \tensor I_{2} = \left[B^{(n-2)} \tensor M^{-1} \right]^* \left[A^{(n-2)} \tensor M \right] =: [B^{(n)}]^* A^{(n)}
	\end{equation*}
	for any invertible matrix $M \in \C^{2\times2}$.
\end{lemma}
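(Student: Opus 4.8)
The plan is to verify the Rigidity Assumptions for the pair $(A^{(n)},B^{(n)}):=(A^{(n-2)}\otimes M,\;B^{(n-2)}\otimes M^{-1})$ directly, reducing each condition in dimension $n$ to the corresponding one in dimension $n-2$ by splitting the Dirac matrices $\alpha_k^{(n)}$ through Lemma~\ref{lemma:recursive_dirac}. First I would settle the identity for $V^{(n)}$: using $(X\otimes Y)^*=X^*\otimes Y^*$ together with the mixed-product property of the Kronecker product,
\[
[B^{(n)}]^*A^{(n)}
=\big([B^{(n-2)}]^*\otimes (M^{-1})^*\big)\big(A^{(n-2)}\otimes M\big)
=\big([B^{(n-2)}]^*A^{(n-2)}\big)\otimes\big((M^{-1})^*M\big)
=V^{(n-2)}\otimes\big((M^{-1})^*M\big).
\]
Since $(M^{-1})^*M$ is invertible, it is nonzero, so $V^{(n)}\neq0$ whenever $V^{(n-2)}\neq0$; for $M$ Hermitian (in particular $M=I_2$) the extra factor is $I_2$ and one recovers literally $V^{(n)}=V^{(n-2)}\otimes I_2$, while choosing $M$ unitary additionally preserves the normalization $|A^{(n)}|=|B^{(n)}|=1$, the spectral norm being multiplicative under Kronecker products.

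Next I would check the vanishing relations $A^{(n)}\alpha_k^{(n)}[B^{(n)}]^*=0$ for $k=1,\dots,n$. Apply Lemma~\ref{lemma:recursive_dirac} with $m=n-2$, so that $n-m=2$ and $\alpha^{(n-m)}=\alpha^{(2)}$ is just the list of Pauli matrices $\sigma_1,\sigma_2,\sigma_3$; for the base step $n-2=1$, where the hypothesis $m\geq 2$ fails, one uses instead the defining relation $\alpha_k^{(3)}=\sigma_1\otimes\alpha_k^{(2)}=\alpha_1^{(1)}\otimes\alpha_k^{(2)}$ valid for $k\leq 3$. In all cases, for $k\in\{1,\dots,n\}$ the first Kronecker factor of $\alpha_k^{(n)}$ is $\alpha_j^{(n-2)}$ with an index $1\leq j\leq n-2$ (the three ranges of Lemma~\ref{lemma:recursive_dirac} give respectively $j=k$, $j=\lfloor (n-2)/2\rfloor+1$, and $j=k-2$, each readily checked to be $\leq n-2$), while the second factor is either $I_2$ or one of the $\sigma_\ell$. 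Hence, by the mixed-product property,
\[
A^{(n)}\alpha_k^{(n)}[B^{(n)}]^*
=\big(A^{(n-2)}\alpha_j^{(n-2)}[B^{(n-2)}]^*\big)\otimes\big(M\,P\,(M^{-1})^*\big),
\qquad P\in\{I_2,\sigma_1,\sigma_2,\sigma_3\},
\]
and the first factor vanishes by the inductive hypothesis, being one of the relations $A^{(n-2)}\alpha_j^{(n-2)}[B^{(n-2)}]^*=0$ with $j\leq n-2$; therefore the whole product is zero.

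Finally I would observe that the type-dependent part of RA($\iota$) is inherited as well: Lemma~\ref{lemma:recursive_dirac} gives $\alpha_{n+1}^{(n)}=\alpha_{(n-2)+1}^{(n-2)}\otimes I_2$, and trivially $I_N=I_{N/2}\otimes I_2$, so that
\[
A^{(n)}\alpha_{n+1}^{(n)}[B^{(n)}]^*=\big(A^{(n-2)}\alpha_{(n-2)+1}^{(n-2)}[B^{(n-2)}]^*\big)\otimes\big(M(M^{-1})^*\big),
\qquad
A^{(n)}[B^{(n)}]^*=\big(A^{(n-2)}[B^{(n-2)}]^*\big)\otimes\big(M(M^{-1})^*\big);
\]
since $M(M^{-1})^*$ is invertible, each left-hand side is zero precisely when its dimension-$(n-2)$ counterpart is, so $(A^{(n)},B^{(n)})$ realizes the same case $\iota\in\{i,ii,iii,iv\}$ as $(A^{(n-2)},B^{(n-2)})$. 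There is essentially no hard step here; the only point demanding care is the index bookkeeping in Lemma~\ref{lemma:recursive_dirac} — in particular the verification that, for $k\leq n$, the first Kronecker factor never reaches index $n-1$ or $n$, which would fall outside the range covered by the inductive hypothesis — together with the separate, but entirely analogous, treatment of the base step $n-2=1$.
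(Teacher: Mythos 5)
Your proof is correct and is essentially the argument the paper intends: its entire justification of this lemma is that it is a trivial consequence of Lemma~\ref{lemma:recursive_dirac} together with the mixed-product property of the Kronecker product, which is exactly the bookkeeping you carry out (checking that for $k\le n$ the first Kronecker factor of $\alpha_k^{(n)}$ is some $\alpha_j^{(n-2)}$ with $j\le n-2$, and that the $\alpha_{n+1}$- and identity-terms pick up only an invertible factor $M(M^{-1})^*$, so the case $\iota$ is preserved). Your two extra observations — the separate treatment of the base step $n-2=1$, where Lemma~\ref{lemma:recursive_dirac} does not apply, and the fact that for non-Hermitian $M$ the factorization literally yields $V^{(n-2)}\otimes (M^{-1})^{*}M$ rather than $V^{(n-2)}\otimes I_2$, which is harmless for admissibility since that factor is invertible — only make explicit minor points the paper leaves implicit.
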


This assertion is a trivial consequence of Lemma\til\ref{lemma:recursive_dirac}. Thus, in any odd dimension $n\ge 1$, couples of examples satisfying RA(i), RA(ii) and RA(iii) are given respectively by
\begin{equation*}
	V_\pm 
	=
	\frac{1}{2}
	\begin{pmatrix}
	I_{\frac{N}{2}} \pm I_{\frac{N}{2}} & 0
	\\
	0 & I_{\frac{N}{2}} \mp I_{\frac{N}{2}}
	\end{pmatrix}
	,
	\quad
	V_\pm
	=
	\frac{i}{2}
	\begin{pmatrix}
	\pm I_{\frac{N}{2}} & -I_{\frac{N}{2}}
	\\
	I_{\frac{N}{2}} & \mp I_{\frac{N}{2}}
	\end{pmatrix},
	\quad
	V_\pm
	=
	\frac{1}{2}
	\begin{pmatrix}
	I_{\frac{N}{2}} & \mp i I_{\frac{N}{2}}
	\\
	\pm i I_{\frac{N}{2}} & I_{\frac{N}{2}}
	\end{pmatrix}
	.
\end{equation*}

Let us turn now our attention to the case of RA(iv), for which there are no examples of potentials in dimension $n=1$. Indeed, let us fix $A, B \in \C^{2\times 2}$ and let us denote with $a=(a_1, a_2)$ and $b=(b_1, b_2)$ their respective first rows. Since we are imposing 
\begin{equation*}
	A\sigma_1B^*= A\sigma_3 B^* = AB^* =0,
\end{equation*}
in particular we obtain that
\begin{align*}
	a_2 \overline{b_1} + a_1 \overline{b_2} 
	=
	a_1 \overline{b_1} - a_2 \overline{b_2} 
	=
	a_1 \overline{b_1} + a_2 \overline{b_2} 
	=0,
\end{align*}
from which we deduce that if $a\neq0$, then $b=0$, and vice versa if $b\neq0$, then $a=0$. Therefore, one can easily be convinced that there are no solutions such that both $A$ and $B$ are non-trivial.

Let us consider then the $3$-dimensional case. By the definition of the Dirac matrices, we would like to find matrices $A,B$ such that $B^*A \neq 0$ and
\begin{equation*}
	A (\sigma_1 \tensor \sigma_1) B^*
	=
	A (\sigma_1 \tensor \sigma_2) B^*
	=
	A (\sigma_1 \tensor \sigma_3) B^*
	=
	A (\sigma_3 \tensor I_2) B^*
	=
	A (I_2 \tensor I_2) B^*
	=
	0.
\end{equation*}
Anyway, from the properties of our brick matrices and by the mixed-product property of the Kronecker product, it is readily seen that we can choose $A=\rho^k_\pm \tensor \rho^0_\pm$ and $B=\tau^k_\pm \tensor \tau^0_\pm$ for any $k\in\{0,2,3\}$. In fact
\begin{equation*}
	(\rho^k_\pm \tensor \rho^0_\pm)
	(\sigma_1 \tensor \sigma_j)
	(\tau^k_\pm \tensor \tau^0_\pm)^*
	=
	\rho^k_\pm \sigma_1 (\tau^k_\pm)^*
	\tensor 
	\rho^0_\pm \sigma_j (\tau^0_\pm)^*
	=
	0 \tensor \rho^0_\pm \sigma_j (\tau^0_\pm)^*
	=
	0
\end{equation*}
for $j\in\{1,2,3\}$, and
\begin{equation*}
	(\rho^k_\pm \tensor \rho^0_\pm)
	(\sigma_h \tensor I_2)
	(\tau^k_\pm \tensor \tau^0_\pm)^*
	=
	\rho^k_\pm \sigma_h (\tau^k_\pm)^*
	\tensor 
	\rho^0_\pm I_2 (\tau^0_\pm)^*
	=
	\rho^k_\pm \sigma_h (\tau^k_\pm)^* \tensor 0
	=
	0
\end{equation*}
for $h\in\{0,3\}$.
Essentially, we use the fact that the first tensorial factors appearing in the definitions of $A$ and $B$ kill $\sigma_1$, while the second tensorial factors kill $I_2$.
At this point, as above we can extend the $3$-dimensional case to any odd dimension $n\ge5$.

Exempli gratia, letting $k=0$, we have that a couple of examples of matricial part of potentials satisfying RA(iv) for odd dimension $n\ge3$ are given by
\begin{align*}
	V_\pm
	&=
	\frac{1}{4}
	\begin{pmatrix}
	- 1 & \pm 1 & \pm 1 & - 1
	\\
	\mp 1 & 1 & 1 & \mp 1
	\\
	\mp 1 & 1 & 1 & \mp 1
	\\
	- 1 & \pm 1 & \pm 1 & - 1
	\end{pmatrix}
	\tensor
	I_{N/4}
	\\
	&=
	-\frac{1}{4}
	\begin{pmatrix}
	\pm 1 & -1
	\\
	1 & \mp 1
	\end{pmatrix}^{\tensor 2}
	\tensor
	I_{N/4}
	\\
	&=
	\left[
	\frac{1}{4}
	\begin{pmatrix}
	1 & \pm 1
	\\
	1 & \pm 1
	\end{pmatrix}^{\tensor 2}
	\tensor
	I_{N/4}
	\right]^*
	\left[
	-\frac{1}{4}
	\begin{pmatrix}
	1 & \mp 1
	\\
	1 & \mp 1
	\end{pmatrix}^{\tensor 2}
	\tensor
	I_{N/4}
	\right]
	\\
	&=
	\left[
	\frac{1}{4}
	\begin{pmatrix}
	1 & \pm 1 & \pm 1 & 1
	\\
	1 & \pm 1 & \pm 1 & 1
	\\
	1 & \pm 1 & \pm 1 & 1
	\\
	1 & \pm 1 & \pm 1 & 1
	\end{pmatrix}
	\tensor
	I_{N/4}
	\right]^*
	\left[
	\frac{1}{4}
	\begin{pmatrix}
	-1 & \pm 1 & \pm 1 & -1
	\\
	-1 & \pm 1 & \pm 1 & -1
	\\
	-1 & \pm 1 & \pm 1 & -1
	\\
	-1 & \pm 1 & \pm 1 & -1
	\end{pmatrix}
	\tensor
	I_{N/4}
	\right]
	\\
	&=
	B_\pm^* A_\pm.
\end{align*}


\subsection{The even-dimensional case}

We will consider the situation case by case for RA(i)--(iv).

\subsubsection{Case of RA(i)} Between the four cases, this is the only one for which we can find examples of our desired potentials in any dimension. Indeed, let us start from $n=2$, for which a couple of examples can be found immediately exploiting our brick matrices, setting $A=\rho^2_\pm$ and $B=\tau^2_\pm$. Therefore, making use of Lemma\til\ref{lem:rec-pot}, a couple of examples for the matricial part of the potentials satisfying RA(i) for any even dimension $n\ge2$ is given by
\begin{align*}
	V_\pm &= 
	\frac{1}{2} 
	\begin{pmatrix}
	I_{N/2} \pm I_{N/2} & 0
	\\
	0 & I_{N/2} \mp I_{N/2}
	\end{pmatrix}
	\\
	&=
	\left[
	\frac{1}{2}
	\begin{pmatrix}
	0 & I_{N/2} \mp I_{N/2}
	\\
	I_{N/2} \pm I_{N/2} & 0
	\end{pmatrix}
	\right]^*
	\left[
	\frac{1}{2}
	\begin{pmatrix}
	0 & I_{N/2} \mp I_{N/2}
	\\
	I_{N/2} \pm I_{N/2} & 0
	\end{pmatrix}
	\right]
	\\
	&=
	B_\pm^* A_\pm.
\end{align*}


\subsubsection{Case of RA(ii)}
 We can find potentials only for $n\ge6$. Indeed, in dimension $n=2$, the situation is similar to the case of RA(iv) for $n=1$. We are searching matrices $A,B \in \C^{2\times2}$ such that $V=B^*A\neq0$, $A \sigma_3 B^* \neq 0$ and
\begin{equation*}
A \sigma_1 B^* = A \sigma_2 B^* = A B^*=0.
\end{equation*}
Denoting with $a=(a_1,a_2)$ and $b=(b_1,b_2)$ the first rows of respectively $A$ and $B$, from the previous condition we infer
\begin{equation*}
a_2 \overline{b_1} + a_1 \overline{b_2}
=
a_2 \overline{b_1} - a_1 \overline{b_2}
=
a_1 \overline{b_1} + a_2 \overline{b_2}
=
0
\end{equation*}
and therefore $a=0$ if $b\neq0$ and on the contrary $b=0$ if $a\neq0$. Thus, there are no solutions such that $A \neq 0$ and $B\neq0$. 

Analogously, we can repeat the argument for $n=4$. In this case the Dirac matrices are
\begin{equation}\label{eq:dirac4}
\begin{gathered}
\alpha_1^{(4)} = \sigma_1 \tensor I_2,
\quad
\alpha_2^{(4)} = \sigma_2 \tensor \sigma_1,
\quad
\alpha_3^{(4)} = \sigma_2 \tensor \sigma_2,
\\
\alpha_4^{(4)} = \sigma_2 \tensor \sigma_3,
\quad
\alpha_5^{(4)} = \sigma_3 \tensor I_2
.
\end{gathered}
\end{equation}
We impose 
\begin{equation}\label{eq:matRAii}
	\begin{gathered}
	A \alpha_j^{(4)} B^* = A B^* = 0
	\\
	A \alpha_{5}^{(4)} B^* \neq 0
	\end{gathered}
\end{equation}
for $j \in \{1,2,3,4\}$. Let us denote with $a=(a_1,\dots,a_4)$ and $b=(b_1,\dots,b_4)$ the first rows respectively of $A$ and $B$. Hence from the conditions \eqref{eq:matRAii} we infer
\begin{align*}
a_3 \overline{b_1} + a_4 \overline{b_2} + a_1 \overline{b_3} + a_2 \overline{b_4} &=0
\\
-a_4 \overline{b_1} - a_3 \overline{b_2} + a_2 \overline{b_3} + a_1 \overline{b_4} &=0
\\
a_4 \overline{b_1} - a_3 \overline{b_2} - a_2 \overline{b_3} + a_1 \overline{b_4} &=0
\\
-a_3 \overline{b_1} + a_4 \overline{b_2} + a_1 \overline{b_3} - a_2 \overline{b_4} &=0
\\	
a_1 \overline{b_1} + a_2 \overline{b_2} + a_3 \overline{b_3} + a_4 \overline{b_4} &=0
\\
a_1 \overline{b_1} + a_2 \overline{b_2} - a_3 \overline{b_3} - a_4 \overline{b_4} &\neq0
\end{align*}
and equivalently
\begin{gather*}
a_3 \overline{b_1} + a_2 \overline{b_4} 
=
a_3 \overline{b_2} - a_1 \overline{b_4} 
=
a_4 \overline{b_1} - a_2 \overline{b_3} 
=
a_4 \overline{b_2} + a_1 \overline{b_3} 
=0
\\
a_1 \overline{b_1} + a_2 \overline{b_2} = - a_3 \overline{b_3} - a_4 \overline{b_4} \neq 0.
\end{gather*}
However, this system is impossible. Suppose indeed that $a_1 \neq0$. Then
\begin{gather*}
\overline{b_3} = - \frac{a_4}{a_1} \overline{b_2},
\qquad
\overline{b_4} = \frac{a_3}{a_1} \overline{b_2}
\\
a_4 \left( \overline{b_1} + \frac{a_2}{a_1} \overline{b_2} \right) 
=
a_3 \left( \overline{b_1} + \frac{a_2}{a_1} \overline{b_2} \right) =0
\\
a_1 \overline{b_1} + a_2 \overline{b_2} = - a_3 \overline{b_3} - a_4 \overline{b_4} \neq 0.
\end{gather*}
From the first two lines one infers that or $a_1 \overline{b_1} + a_2 \overline{b_2}=0$, or $a_3=a_4=b_3=b_4=0$. Both the possibilities are incompatible with the last condition. Similarly one can prove that the system is impossible also when $a_1=0$.

Now, let us look at the dimension $n=6$. Here we can build examples by the aid of our brick matrices, but it is not so straightforward as in the odd-dimensional case, and we need to be sneaky. 
Firstly, recall that
\begin{equation*}
	\alpha_1^{(6)} = \sigma_1 \tensor I_4,
	\quad
	\alpha_{k+1}^{(6)} = \sigma_2 \tensor \alpha_k^{(4)},
	\quad
	\alpha_7^{(6)} = \sigma_3 \tensor I_4
\end{equation*}
for $k\in\{1,\dots,5\}$.
%
We search matrices $A,B \in \C^{8\times8}$ such that
\begin{gather*}
AB^* = A\alpha^{(6)}_k B^*=0
\\
B^*A \neq 0 \neq A \alpha^{(6)}_7 B^*
\end{gather*}
for $k\in\{1,\dots,6\}$.
Let us start with the ansatz that $A$ and $B$ have the following structure:
\begin{equation*}
A =
\frac{1}{2}
\begin{pmatrix}
\widetilde{A} & -\widetilde{A} (\sigma_1\tensor\sigma_1)
\\
\widetilde{A}  & -\widetilde{A}(\sigma_1\tensor\sigma_1)
\end{pmatrix},
\qquad
B =
\frac{1}{2}
\begin{pmatrix}
\widetilde{B} & \widetilde{B} (\sigma_1\tensor\sigma_1)
\\
\widetilde{B}  & \widetilde{B}(\sigma_1\tensor\sigma_1)
\end{pmatrix}
\end{equation*}
with $\widetilde{A},\widetilde{B} \in \C^{4\times4}$. In this way, recalling the definition of the Dirac matrices and observing that $(\sigma_1 \tensor \sigma_1)^2=I_4$, the conditions $AB^*=0$ and $A\alpha_1^{(6)}B^*=A(\sigma_1 \tensor I_4)B^*=0$ are immediately verified and the other ones become
\begin{align}\label{n6k}
A \alpha_{k+1}^{(6)} B^* =
-\frac{i}{4}
\begin{pmatrix}
1 & 1
\\
1 & 1
\end{pmatrix}
\tensor
\widetilde{A}[(\sigma_1\tensor\sigma_1)\alpha^{(4)}_k + \alpha^{(4)}_k(\sigma_1\tensor\sigma_1)]\widetilde{B}^* &=0
\end{align}
for $k\in\{1,\dots,5\}$, and
\begin{align*}
A \alpha_7 B^* =
A (\sigma_3 \tensor I_4) B^*=
\frac{1}{2}
\begin{pmatrix}
1 & 1
\\
1 & 1
\end{pmatrix}
\tensor 
\widetilde{A}\widetilde{B}^* &\neq 0
\\
B^*A =
\frac{1}{2}
\begin{pmatrix}
\widetilde{B}^*\widetilde{A} & -\widetilde{B}^*\widetilde{A}(\sigma_1\tensor\sigma_1)
\\
(\sigma_1\tensor\sigma_1)\widetilde{B}^*\widetilde{A} & -(\sigma_1\tensor\sigma_1)\widetilde{B}^*\widetilde{A}(\sigma_1\tensor\sigma_1) 
\end{pmatrix}
&\neq 0.
\end{align*}
In \eqref{n6k}, exploiting the definition of the Dirac matrices in dimension $n=4$, the anticommutation relations \eqref{clifford} and the identities $\sigma_1 \sigma_2=i\sigma_3$ and $\sigma_1\sigma_3=-i\sigma_2$, we get that also the identities
\begin{align*}
A \alpha_{3}^{(6)} B^* = A \alpha_{6}^{(6)} B^* =0
%
\end{align*}
are immediately satisfied, and the remaining ones reduce to
\begin{align*}
A \alpha_{2}^{(6)} B^* = 
-\frac{i}{2}
\begin{pmatrix}
1 & 1
\\
1 & 1
\end{pmatrix}
\tensor
\widetilde{A} (I_2 \tensor \sigma_1) \widetilde{B}^* &=0
\\
A \alpha_{4}^{(6)} B^* = 
\frac{i}{2}
\begin{pmatrix}
1 & 1
\\
1 & 1
\end{pmatrix}
\tensor
\widetilde{A} (\sigma_3 \tensor \sigma_3) \widetilde{B}^* &=0
\\
A \alpha_{5}^{(6)} B^* = 
-\frac{i}{2}
\begin{pmatrix}
1 & 1
\\
1 & 1
\end{pmatrix}
\tensor
\widetilde{A} (\sigma_3 \tensor \sigma_2) \widetilde{B}^* &=0.
\end{align*}
Thus, it would be enough to find $\widetilde{A},\widetilde{B} \in \C^{4\times4}$ such that 
\begin{gather*}
\widetilde{A} (I_2 \tensor \sigma_1) \widetilde{B}^*
=
\widetilde{A} (\sigma_3 \tensor \sigma_3) \widetilde{B}^*
=
\widetilde{A} (\sigma_3 \tensor \sigma_2) \widetilde{B}^*
=0
\\
\widetilde{A}\widetilde{B}^* \neq 0 \neq \widetilde{B}^* \widetilde{A}.
\end{gather*}
This step is easily achieved exploiting our brick matrices, indeed we can choose
\begin{equation*}
	\widetilde{A}_\pm= \rho^3_\pm \tensor \rho^k_\pm,
	\quad
	\widetilde{B}_\pm = \tau^3_\pm \tensor \tau^k_\pm
\end{equation*}
for any fixed $k\in\{0,2,3\}$. In this way we can construct many examples for the $6$-dimensional case. If we choose e.g. $k=3$ in the above definition of $\widetilde{A}$ and $\widetilde{B}$, and taking again in account Lemma\til\ref{lem:rec-pot}, we can exhibit the following couple of examples of matrices satisfying RA(ii) for even dimension $n\ge6$:
\begin{align*}
	V_\pm
	&=
	\frac{1}{8}
	\begin{pmatrix}
		(I_2 \pm \sigma_2)^{\tensor 2} & -(\sigma_1 \mp i \sigma_3)^{\tensor 2}
		\\
		(\sigma_1 \pm i \sigma_3)^{\tensor 2} & -(I_2 \mp \sigma_2)^{\tensor 2}
	\end{pmatrix}
	\tensor 
	I_{N/8}
	\\
	&=
	\frac{1}{8}
	\begin{pmatrix}
		1 & \mp i & \mp i & -1 & 1 & \pm i & \pm i & -1
		\\
		\pm i & 1 & 1 & \mp i & \pm i & -1 & -1 & \mp i
		\\
		\pm i & 1 & 1 & \mp i & \pm i & -1 & -1 & \mp i
		\\
		-1 & \pm i & \pm i & 1 & -1 & \mp i & \mp i & 1
		\\
		-1 & \pm i & \pm i & 1 & -1 & \mp i & \mp i & 1
		\\
		\pm i & 1 & 1 & \mp i & \pm i & -1 & -1 & \mp i
		\\
		\pm i & 1 & 1 & \mp i & \pm i & -1 & -1 & \mp i
		\\
		1 & \mp i & \mp i & -1 & 1 & \pm i & \pm i & -1
	\end{pmatrix}
	\tensor 
	I_{N/8}
	\\
	&=
	B_\pm^* A_\pm
\end{align*}
where
\begin{align*}
A_\pm
&=
\frac{1}{8}
\begin{pmatrix}
(I_2 \pm \sigma_2)^{\tensor 2} & -(\sigma_1 \mp i \sigma_3)^{\tensor 2}
\\
(I_2 \pm \sigma_2)^{\tensor 2} & -(\sigma_1 \mp i \sigma_3)^{\tensor 2}
\end{pmatrix}
\tensor 
I_{N/8},
\\
B_\pm
&=
\frac{1}{8}
\begin{pmatrix}
(I_2 \pm \sigma_2)^{\tensor 2} & (\sigma_1 \mp i \sigma_3)^{\tensor 2}
\\
(I_2 \pm \sigma_2)^{\tensor 2} & (\sigma_1 \mp i \sigma_3)^{\tensor 2}
\end{pmatrix}
\tensor 
I_{N/8}
.
\end{align*}
%

\subsubsection{Case of RA(iii).} Mutatis mutandis, the situation is similar to the the case of RA(ii), hence we skip the computations. As above, one can prove the absence of our desired potentials in dimension $n=2$ and $n=4$. In even dimension $n\ge6$ instead, we impose to $A$ and $B$ to have the structure 
\begin{equation*}
A =
\frac{1}{2}
\begin{pmatrix}
\widetilde{A} & \widetilde{A} (\sigma_1\tensor\sigma_1)
\\
\widetilde{A}  & \widetilde{A}(\sigma_1\tensor\sigma_1)
\end{pmatrix}
\tensor
I_{N/8}
,
\qquad
B =
\frac{1}{2}
\begin{pmatrix}
\widetilde{B} & \widetilde{B} (\sigma_1\tensor\sigma_1)
\\
\widetilde{B}  & \widetilde{B}(\sigma_1\tensor\sigma_1)
\end{pmatrix}
\tensor
I_{N/8}
,
\end{equation*}
where $\widetilde{A}, \widetilde{B} \in \C^{4\times4}$ have to satisfy the relations
\begin{gather*}
	\widetilde{A} (\sigma_1 \tensor \sigma_1) \widetilde{B}^*
	=
	\widetilde{A} (\sigma_3 \tensor I_2) \widetilde{B}^*
	=
	\widetilde{A} (\sigma_2 \tensor \sigma_1) \widetilde{B}^*
	=0
	\\
	\widetilde{A}\widetilde{B}^* \neq 0 \neq \widetilde{B}^* \widetilde{A}.
\end{gather*}
For example we can choose again
\begin{equation*}
	\widetilde{A}_\pm = \rho^3_\pm \tensor \rho^3_\pm,
	\quad
	\widetilde{B}_\pm = \tau^3_\pm \tensor \tau^3_\pm,
\end{equation*}
and hence we obtain the following couple of examples of matrices satisfying RA(iii) in even dimension $n\ge6$:
\begin{align*}
V_\pm
&=
\frac{1}{8}
\begin{pmatrix}
(I_2 \pm \sigma_2)^{\tensor 2} & (\sigma_1 \mp i \sigma_3)^{\tensor 2}
\\
(\sigma_1 \pm i \sigma_3)^{\tensor 2} & (I_2 \mp \sigma_2)^{\tensor 2}
\end{pmatrix}
\tensor 
I_{N/8}
\\
&=
\frac{1}{8}
\begin{pmatrix}
1 & \mp i & \mp i & -1 & -1 & \mp i & \mp i & 1
\\
\pm i & 1 & 1 & \mp i & \mp i & 1 & 1 & \pm i
\\
\pm i & 1 & 1 & \mp i & \mp i & 1 & 1 & \pm i
\\
-1 & \pm i & \pm i & 1 & 1 & \pm i & \pm i & -1
\\
-1 & \pm i & \pm i & 1 & 1 & \pm i & \pm i & -1
\\
\pm i & 1 & 1 & \mp i & \mp i & 1 & 1 & \pm i
\\
\pm i & 1 & 1 & \mp i & \mp i & 1 & 1 & \pm i
\\
1 & \mp i & \mp i & -1 & -1 & \mp i & \mp i & 1
\end{pmatrix}
\tensor 
I_{N/8}
\\
&=
B_\pm^* A_\pm
\end{align*}
where
\begin{align*}
A_\pm
=
\frac{1}{8}
\begin{pmatrix}
(I_2 \pm \sigma_2)^{\tensor 2} & (\sigma_1 \mp i \sigma_3)^{\tensor 2}
\\
(I_2 \pm \sigma_2)^{\tensor 2} & (\sigma_1 \mp i \sigma_3)^{\tensor 2}
\end{pmatrix}
\tensor 
I_{N/8}
=
B_\pm
.
\end{align*}

\subsubsection{Case of RA(iv)} 
In dimension $n=2$ there are no potentials, and this can be easily seen as in the above case of RA(ii). In even dimension $n\ge4$ instead, recalling the definition of the Dirac matrices in $4$-dimension \eqref{eq:dirac4} and Lemma\til\ref{lem:rec-pot},
it is easy to check that a couple of examples for our desired matrices is obtained choosing $V_\pm = B_\pm^* A_\pm$ with
\begin{align*}
	A_\pm = \rho^2_\pm \tensor \rho^0_\pm \tensor I_{N/4},
	\qquad
	B_\pm = \tau^2_\pm \tensor \tau^0_\pm \tensor I_{N/4},
\end{align*}
hence videlicet
\begin{align*}
	V_\pm
	=
	\frac{i}{4}
	\begin{pmatrix}
	1 \pm 1 & -1 \mp 1 & 0 & 0
	\\
	1 \pm 1 & -1 \mp 1 & 0 & 0
	\\
	0 & 0 & -1 \pm 1 & -1 \pm 1 
	\\
	0 & 0 & 1 \mp 1 & 1 \mp 1
	\end{pmatrix}
	\tensor
	I_{N/4}.
\end{align*}

This concludes the parade of examples for the matricial parts $V$ of the potentials satisfying our Rigidity Assumptions (i)--(iv).


\section*{Acknowledgement}
The first author is partially supported by JSPS KAKENHI Grant-in-Aid for Young Scientists (B) \#JP17K14218 and Grant-in-Aid for Scientific Research (B) \#JP17H02854. The second author is  member of the \lq\lq Gruppo Nazionale per L'Analisi Matematica, la Probabilit\`{a} e le loro Applicazioni'' (GNAMPA) of the \lq\lq Istituto Nazionale di Alta Matematica'' (INdAM) and he is partially supported by \textit{Progetti per Avvio alla Ricerca di Tipo 1} and \textit{Progetti di Mobilità all'Estero per Dottorandi} by Sapienza Universit\`{a} di Roma.
He is also grateful to the first author and to Osaka University for the hearty hospitality during his stay there (December 2020 -- May 2021), when the present work was mostly developed.


\bibliographystyle{abbrv}
\bibliography{ref}

\end{document}